\newcounter{enunciato}[section]
\newcounter{enunciat}[subsection]
\newtheorem{ittheorem}{Theorem}
\newtheorem{ittheoremm}{Theorem}
\newtheorem{itlemma}{Lemma}
\newtheorem{itlemmaa}{Lemma}
\newtheorem{itproposition}{Proposition}
\newtheorem{itpropositiona}{Proposition}
\newtheorem{itdefinition}{Definition}
\newtheorem{itconjecture}{Conjecture}
\newtheorem{itassumption}{Assumption}
\newtheorem{itcorollary}{Corollary}
\newtheorem{itcorollarya}{Corollary}
\newtheorem{itremark}{Remark}
\newenvironment{remark}{\addtocounter{enunciato}{1}   	\begin{itremark}}{\end{itremark}}
\newenvironment{theorem}{\addtocounter{enunciato}{1}
	\begin{ittheorem}}{\end{ittheorem}}
\newenvironment{theoremm}{\addtocounter{enunciato}{1}
	\begin{ittheoremm}}{\end{ittheoremm}}
\newenvironment{lemma}{\addtocounter{enunciato}{1}
	\begin{itlemma}}{\end{itlemma}}
\newenvironment{lemmaa}{\addtocounter{enunciat}{1}
\begin{itlemmaa}}{\end{itlemmaa}}
\newenvironment{proposition}{\addtocounter{enunciato}{1}
	\begin{itproposition}}{\end{itproposition}}
\newenvironment{definition}{\addtocounter{enunciato}{1}
	\begin{itdefinition}}{\end{itdefinition}}
\newenvironment{conjecture}{\addtocounter{enunciato}{1}
	\begin{itconjecture}}{\end{itconjecture}}
\newenvironment{assumption}{\addtocounter{enunciato}{1}
	\begin{itassumption}}{\end{itassumption}}
\newenvironment{corollary}{\addtocounter{enunciato}{1}
	\begin{itcorollary}}{\end{itcorollary}}
\newenvironment{corollarya}{\addtocounter{enunciat}{1}
\begin{itcorollarya}}{\end{itcorollarya}}
\newcommand{\beq}[1]{\begin{equation}\label{#1}}
\newcommand{\eeq}{\end{equation}}
\newcommand{\ba}[1]{\begin{align}\label{#1}}
\newcommand{\ea}{\end{align}}
\newcommand{\bea}[1]{\begin{eqnarray}\label{#1}}
\newcommand{\eea}{\end{eqnarray}}
\newcommand{\bl}[1]{\begin{lemma}\label{#1}}
	\newcommand{\el}{\end{lemma}}
\newcommand{\bt}[1]{\begin{theorem}\label{#1}}
	\newcommand{\et}{\end{theorem}}
\newcommand{\bd}[1]{\begin{definition}\label{#1}}
	\newcommand{\ed}{\end{definition}}
\newcommand{\bp}[1]{\begin{proposition}\label{#1}}
	\newcommand{\ep}{\end{proposition}}
\newcommand{\bc}[1]{\begin{corollary}\label{#1}}
	\newcommand{\ec}{\end{corollary}}
\newcommand{\bcj}[1]{\begin{conjecture}\label{#1}}
	\newcommand{\ecj}{\end{conjecture}}
\newcommand{\bass}[1]{\begin{assumption}\label{#1}}
	\newcommand{\eass}{\end{assumption}}
\newcommand{\bpr}{\begin{proof}}
	\newcommand{\epr}{\end{proof}}
\newcommand{\bprl}[1]{\begin{proofof}{\it\ref{#1}}.\,\,}
	\newcommand{\eprl}{\end{proofof}}
\newcommand{\bprp}[1]{\begin{proofofp}{\emph{\ref{#1}}}.\,\,}
	\newcommand{\eprp}{\end{proofofp}}
\newcommand{\br}[1]{\begin{remark}\label{#1}}
	\newcommand{\er}{\end{remark}}
\newcommand{\bi}{\begin{itemize}}
	\newcommand{\ei}{\end{itemize}}
\newcommand{\ben}{\begin{enumerate}}
	\newcommand{\een}{\end{enumerate}}
\newenvironment{proof}{\noindent {\em Proof}.\,\,}
{\hspace*{\fill}$\square$\medskip}
\newenvironment{proofof}{\noindent {\em Proof of Lemma\,\,}}
{\hspace*{\fill}$\square$\medskip}
\newenvironment{proofofp}{\noindent {\em Proof of Proposition\,\,}}
{\hspace*{\fill}$\square$\medskip}
\newenvironment{proofoft}{\noindent {\em Proof of Theorem\,\,}}
{\hspace*{\fill}$\square$\medskip}
\newcommand\restr[2]{{
		\left.\kern-\nulldelimiterspace 
		#1 
		\vphantom{\big|} 
		\right|_{#2} 
}}
\newcommand\smallrestr[2]{{
		\left.\kern-\nulldelimiterspace 
		#1 
		\vphantom{|} 
		\right|_{#2} 
}}
\def \R {{\mathbb R}}
\def \N {{\mathbb N}}
\def \C {{\mathbb C}}
\def \ra {\rightarrow}
\def \barr{\begin{array}}
	\def \earr {\end{array}}
\def \P {{\mathbb P}}
\def \E {{\mathbb E}}
\def \cL {{\mathcal L}}
\def \cH {{\mathcal H}}
\def \cT {{\mathcal T}}
\def \cE {{\mathcal E}}
\def \cP {{\mathcal P}}
\def \cM {{\mathcal M}}
\def \cK {{\mathcal K}}
\def \cF {{\mathcal F}}
\def\eps {\varepsilon}
\newcommand{\qaq}{\quad \text{ and } \quad}
\newcommand{\qfa}{\quad \text{ for all }  }
\definecolor{darkgreen}{rgb}{0,.6,0}
\definecolor{darkagenta}{rgb}{.5,0,.5}
\definecolor{darkred}{rgb}{1,0,0}
\definecolor{darkblue}{rgb}{0,0,.4}
\definecolor{black}{rgb}{0,0,0}
\definecolor{gray}{rgb}{.4,.4,.4}
\definecolor{white}{rgb}{0.99,0.99,0.99}
\definecolor{geel1}{rgb}{1,1,0.3}
\newcommand{\Rm}{m}
\numberwithin{equation}{section}
\newcommand{\capa}{\mathrm{Cap}}
\newcommand{\cov}{\mathrm{Cov}}
\newcommand{\var}{\mathrm{Var}}
\newcommand{\ee}{\mathrm{e}}
\newcommand{\HH}{\mathrm{H}}
\newcommand{\ieps}{\frac{1}{\eps}}
\begin{document}
	\bibliographystyle{abbrv}

\title{Metastability in a continuous mean-field model\\ at low temperature
	and strong interaction 
}

\author{
	\renewcommand{\thefootnote}{\arabic{footnote}}
	K.\ Bashiri\footnotemark[1] \qaq G. Menz\footnotemark[2]
}

\footnotetext[1]{
	Institut f\"ur Angewandte Mathematik,
	Rheinische Friedrich-Wilhelms-Universit\"at, 
	Endenicher Allee 60, 53115 Bonn, Germany.
	Email:
	bashiri@iam.uni-bonn.de.
}
\footnotetext[2]{
	 UCLA Mathematics Department,
	 Box 951555,
	 Los Angeles, CA 90095-1555.
	 E-mail: gmenz@math.ucla.edu.
}

\maketitle

\begin{center}
	\emph{In memory of our dear friend and mentor Dmitry Ioffe.}
\end{center}
 
 \begin{abstract}
 \let\thefootnote\svthefootnote{
 	We consider a system of $ N \in \mathbb{N} $  mean-field interacting stochastic differential equations that are driven by Brownian noise and a single-site potential of the form  $ z \mapsto z^4/4-z^2/2 $.
 	The strength of the noise is measured by a small parameter $ \varepsilon >0$ (which we interpret as the \emph{temperature}), and we  suppose that  the strength of the interaction is given by $ J>0 $.
Choosing  the \emph{empirical mean} ($ P:\R^N \ra \R $, $ Px =1/N \sum_i x_i $) as the macroscopic order parameter for the system, we show that the resulting macroscopic Hamiltonian has two global minima, one at $ -m^\star_\eps <0 $ and one at $ m^\star_\eps>0 $. 
Following this observation, we are interested in the average transition time of the system to $ P^{-1}(m^\star_\eps) $, when the initial configuration is drawn according to a probability measure (the so-called \emph{last-exit distribution}), which is supported around the hyperplane $ P^{-1}(-m^\star_\eps) $.
Under the assumption of strong interaction, $ J>1 $, the main result is a formula for this transition time, which is reminiscent of the celebrated Eyring-Kramers formula (see \cite{BEGK04})  up to a multiplicative error term that tends to $ 1 $ as $ N \ra \infty $ and $ \eps \downarrow 0 $. 
The proof is based on the \emph{potential-theoretic approach to metastability}. 

In the last chapter we add some estimates on the metastable transition time in the high-temperature regime, where $ \eps =1 $, and for a large class of   single-site potentials. 

\medskip
\noindent
{\bf Key words and phrases.} 
Metastability, Local Cram\'er theorem, Kramers' law.

\smallskip
\noindent
{\bf 2010 Mathematics Subject Classification.} 
	60F10; 60J60; 60K35; 82C22
}
\let\thefootnote\relax\footnotetext[0]{
K.B.\   is partially  supported by the Deutsche Forschungsgemeinschaft (DFG, German Research Foundation) - Projektnummer 211504053 - SFB 1060 and by Germany's Excellence Strategy -- GZ 2047/1, projekt-id 390685813 --  ``Hausdorff Center for Mathematics'' at Bonn University.}
\end{abstract}


\section*{Introduction}
By now it is well-known that   many stochastic systems exhibit a phenomenon called   \emph{metastability}.
A typical situation for this is the following. 
First, for a relatively long time, the system is trapped in a state, (the \emph{metastable state}), which is not the (sole) equilibrium state of the system.
Second, after many unsuccessful attempts and due to local fluctuations, the system finally makes the transition to the (other) equilibrium state (the \emph{stable state}). 
In many cases, this transition is triggered by the appearance of a \emph{critical state}, and the metastable and the stable states are 
modelled through  local minima of a \emph{free energy functional} or \emph{Hamiltonian} corresponding to the system.
For a more detailed introduction to metastability, we refer to \cite[Chapter 1]{BdH15}.



In this paper, we are interested in the metastable behaviour of a  system of $ N \in \N $ stochastic differential equations  
 given by
\begin{align}\label{EqIntroSpinSystem}
\begin{split}
dx^{N,\eps}_i (t)&~=~ - \psi' \left(x^{N,\eps}_i (t)\right) \, dt - \frac{J}{N} \sum_{j=0	}^{N-1} \left( x^{N,\eps}_i (t)- x_j^{N,\eps}(t)\right)\, dt  + \sqrt{2 \eps}\,  dB_i (t),  
\end{split}
\end{align} 
where $ t \in (0,\infty),\ 0\leq i \leq N-1, $ $ \eps\in(0,1] $,    
$ B^{N} = (B_i)_{i=0,\dots,N-1}  $ is an $ N $-dimensional Brownian motion, $ J>0 $ and the \emph{single-site potential} $ \psi :\R\ra \R $ is given by 
$ \psi(z)= \frac{1}{4}z^4 - \frac{1}{2} z^2 . $
We consider the strength $ \eps $ of the Brownian noise as the \emph{temperature} of the system.

We proceed as follows. 
First, in order to analyse the system for large $ N $, we choose the \emph{empirical mean}, $ P:\R^N \ra \R,$  $ Px = 1/N \sum_{i=0}^{N-1} x_i $, as the \emph{macroscopic order parameter}. 
That is, we consider the image of the system under the map $ P $. 
Then, as a result of an improvement of the well-known Cram\'er theorem for this setting, which we call \emph{local Cram\'er theorem} (see Section \ref{SecMacrSystem}), we obtain a function  $ \bar{H}_\eps :\R\ra\R $, which we interpret as the \emph{macroscopic Hamiltonian}  of the system. 
A simple analysis shows that 
$ \bar{H}_\eps $ admits exactly two global minima at $ -m^\star_\eps <0$ and $ m^\star_\eps >0 $, and that $ \bar{H}_\eps $ admits a unique local maximum at $ 0 $.  
This fact indicates that our model  exhibits metastable behaviour with the two metastable states being the hyperplanes $ P^{-1}(m^\star_\eps) $ and $ P^{-1}(-m^\star_\eps) $.
The goal of this paper is   to compute the average transition time to  a region around $ P^{-1}(m^\star_\eps) $,   when the system is initially close to $ P^{-1}(-m^\star_\eps) $.

We tackle this goal in two different regimes,
the first one being the \emph{low-temperature regime}, where the strength $ \eps $ of the Brownian noise  tends to zero, and the second one being the \emph{high-temperature regime}, where we set $ \eps =1 $. 
We obtain the following results in this paper.
\begin{itemize}
	\item In Chapter \ref{ChapSmallNoise} we   show that in the low-temperature regime and under the assumption\footnote{The reason for this assumption is  that, if $ J>1 $, we are able to control the microscopic fluctuations via functional inequalities.
		We explain this in further detail in Remark \ref{RemarkJAss}.} that $ J>1 $, the average transition time is asymptotically given by a formula, which is of a similar form as the well-known Eyring-Kramers formula (see \cite{BEGK04}) up to a multiplicative error term that tends to $ 1 $ as $ N \ra \infty $ and $ \eps \ra 0 $. 
		Such a result is often known as \emph{Kramers' law} in the literature. See \cite{Berglund} for a review on such results. 
	\item In Chapter \ref{ChapRoughEstimates} we consider the high-temperature regime, where we only show that, as $ N \ra \infty $, the average transition time is confined to an interval  $ [\alpha\, \ee^{N\Delta}, \beta\, \ee^{N\Delta} ] $, where $ \Delta = \bar{H}_1(0)-\bar{H}_1(-m^\star_1) $, and $ 0<\alpha<\beta< \infty $ are independent of $ N $.
	This result   still holds true if we replace $ \psi $ by a large class of single-site potentials. 
\end{itemize}

Our proofs are based on the   \emph{potential-theoretic  approach to metastability}, which was 
initiated in the seminal papers \cite{BEGK01}, \cite{BEGK02} \cite{BEGK04} and \cite{eckhoff2005precise}.
Here,  one uses   tools from  potential theory to
tackle metastability.
In particular, one obtains that  the average transition time to the (other) equilibrium state can be expressed in terms of 
quantities from \emph{electric networks}. 
More precisely, in terms of \textit{capacities}, for which powerful variational principles are known.
Hence, the computation of  sharp estimates basically reduces to an  appropriate choice of test functions in those variational principles.
The reader is referred to the monograph \cite{BdH15} for an extensive treatment of this approach.

We now provide some   remarks on the historical background on metastability results in high-dimensional diffusion models. 
In the papers \cite{Barret}, \cite{BBovM}, \cite{BGesuN}, and \cite{BerglundGentz},
Kramers' law has been shown for  systems of $ N $ nearest-neighbour interacting stochastic differential equations in low temperature.
These models are considered as 
$ N $-dimensional approximations of
stochastic partial differential equations. 
A similar setting was studied in 
	\cite{BerglundFernandezGentz}, where, instead of the potential-theoretic approach, the so-called
	\emph{path-wise approach to metastability} was used. 
This approach, initiated in \cite{CGOV84}, first focused on the asymptotic exponential distribution of the transition time to the (other) equilibrium state, and then developed itself upon the \emph{Freidlin-Wentzell theory}. 
	In this approach, the asymptotic behaviour of the average transition time is computed   up to logarithmic equivalence.	
	We refer to the monograph \cite{OV04} for  a comprehensive introduction to this approach.
 
 For \emph{high-dimensional mean-field interacting systems  in the high-temperature regime} (i.e.\ for the setting of Chapter \ref{ChapRoughEstimates} in this paper),  the asymptotic behaviour, up to logarithmic equivalence, of the  average transition time has been stated without proof in \cite[Theorem 4]{dawgaetunneling}. 
 The rough estimates  from Chapter \ref{ChapRoughEstimates} provide a slightly improved version of  this conjecture under different initial conditions. We explain the relation between the results of Chapter \ref{ChapRoughEstimates} and the conjecture formulated in \cite[Theorem 4]{dawgaetunneling} in greater detail  below Theorem \ref{ThmIntroEKF} in Section \ref{SecRoughResult}.
 Up to the authors' best knowledge, besides Chapter \ref{ChapRoughEstimates} of this paper, the only other result which is aimed to verify the conjecture formulated in \cite[Theorem 4]{dawgaetunneling} is given in \cite{GvalaniSchlichting2020}.
 In this paper  the system of Chapter \ref{ChapRoughEstimates}  is considered but with more general interaction and by restricting  the stochastic differential equations \eqref{EqIntroSpinSystem} to a finite torus. 
 The main result in \cite{GvalaniSchlichting2020} is an estimate on the probability that a transition between the two metastable states occurs in a time interval which is independent of $ N $; see \cite[Theorem 1.4]{GvalaniSchlichting2020} for more details. 
 The latter result provides a lower bound for the average transition time in the conjecture formulated in  \cite[Theorem 4]{dawgaetunneling} for the case that the stochastic differential equations are restricted to the torus.

 Hence,  only very few metastability results are known for high-dimensional mean-field interacting systems  in the high-temperature regime.
 In contrast to that, the general behaviour of such systems is by now well-understood. For instance, a \emph{law of large numbers} and a \emph{large deviation principle} for a much broader class than the system \eqref{EqIntroSpinSystem} is shown in \cite{gae} and \cite{dawgae}, respectively. 
 
 Also the case of \emph{high-dimensional mean-field interacting systems in the low-temperature regime} (i.e.\ for the setting of Chapter \ref{ChapSmallNoise} in this paper) has been studied before; see for instance \cite{herrmann2016mean} or \cite{Orrieri} and the references therein.  
 However, up to the authors' best knowledge, the results  of Chapter \ref{ChapSmallNoise} are the first ones on the metastable behaviour of such systems.

 We conclude this  introduction with a comment on the fact that we consider the case of \emph{strong interaction}. 
 In the setting of Chapter \ref{ChapRoughEstimates} in this paper this assumption is essential and can not be removed. 
 More precisely,  we have to assume that the strength of the interaction $ J $ is larger than some quantity (cf.\ Assumption \ref{Assumptions}) in order  to ensure that the macroscopic Hamiltonian $ \bar{H}_1 $ (recall that $ \eps = 1 $ in Chapter \ref{ChapRoughEstimates}) admits exactly the two global minima  $ \pm m^\star_1 $.
 That is, if $ J $ is too small, the system does not admit metastable behaviour under the order parameter given by the empirical mean.
 However,  the situation is different in Chapter \ref{ChapSmallNoise}. 
 In this chapter we also assume that $ J $ is larger than some quantity.
 Namely, we suppose that $ J > 1 $.
 But, and this is the crucial difference to Chapter \ref{ChapRoughEstimates}, it can also be shown that if $ J \leq 1 $, then, for $ \eps  $ small enough,   
 $ \bar{H}_\eps $ is a double-well function and consequently the system may admit metastable behaviour. 
 The reason why we nevertheless make this assumption is  that, if $ J>1 $, we are able to control the microscopic fluctuations via functional inequalities.
 	We explain this in further detail in Remark \ref{RemarkJAss}.
 To study the metastable behaviour also for the case $ J \leq 1 $ is the content of future research. 
 Up to the author's best knowledge this is still an open problem, and there is no relevant literature on this problem yet. 

\paragraph*{Outline of the paper.}
In Chapter \ref{ChapModelAndResults}, we introduce the model, formulate the main results, and sketch the main ideas of this paper. 
In Chapter \ref{ChapSmallNoise}, we provide the full details for the proof of Kramers' law in the low-temperature regime  and under the assumption that $ J>1 $.
In Chapter \ref{ChapRoughEstimates} we compute estimates on the average transition time in the high-temperature regime. 
Finally, in the appendix, we   state some general properties of Legendre transforms,  compute certain asymptotic integrals by using Laplace's  method, and provide the proofs of the   local Cram\'er theorem and the equivalence of ensembles, which are the key ingredients    in this paper.  

\paragraph*{Notation.}\label{SecNotation}
\begin{itemize}
	\setlength\itemsep{-0.1em} 
	\item  $ \cP(Y)  $ denotes the space of Borel probability measures on the topological space $ Y $. 
	\item For $ \mu \in \cM_1(Y)  $ and a Borel map $ f : Y\ra \bar{Y} $,  $ f_\#\mu  $ is the image measure of $ \mu $ by $ f $. 
	\item In this paper, $ x $ is always an element of $ \R^N $ for $ N \in \N $, and its components are denoted by $ x_i $.
	$ z $ and $ m $ are always elements of $ \R $. 
	\item Let $ K \subset \R$ be a compact set, and let $ f: (0,1] \times \N \ra [0,\infty) $.
	In this paper, $ O_K(f(\eps,N)) $ always stands for a function, whose absolute value is bounded   by $ f $ for $ \eps $ small enough, for $ N $ large enough and uniformly in $ K $. 
	More precisely, this means that there exist constants $ C_K>0 $, $ N_K\in \N $ and $ \eps_K \in (0,1] $ such that  $ O_K(f(\eps,N)) = R_K(m,\eps,N)$ for some  function $ R_K: K\times (0,\eps_K] \times \N\cap [N_K,\infty)  \ra \R $ with     
	\begin{align}
	|R_K(m,\eps,N)| \ \leq \ C_K\,f(\eps,N) \qfa    m \in K,\, \eps\in (0,\eps_K],\,  N\geq N_K  .  
	\end{align}	
	If, in addition, we   have that $ c_K\,f(\eps,N) \ \leq \  |R_K(m,\eps,N)| $  for all $  m \in K,\, \eps\in (0,\eps_K'],\,  N\geq N_K'  $ for some $ c_K >0 $, $ N_K'\in \N $ and $ \eps_K' \in (0,1] $ we write $ \Omega_K(f(\eps,N)) $ instead of $ O_K(f(\eps,N)) $. 
\item Similarly,
	 $ O(f(\eps,N)) $   always stands for a function, whose absolute value is bounded   by $ f $ for $ \eps $ small enough and for $ N $ large enough. 
	 That is, there exist constants $ C'>0 $, $ N'\in \N $ and $ \eps' \in (0,1] $ such that  $ O(f(\eps,N)) = R(\eps,N)$ for some  function $ R:  (0,\eps'] \times \N\cap [N',\infty)  \ra \R  $ with     
	 \begin{align}
	 |R(\eps,N)| \ \leq \ C'\,f(\eps,N) \qfa     \eps\in (0,\eps'], \, N\geq N'  .  
	 \end{align}	
	 Finally, we define $ \Omega(f(\eps,N)) $ analogously as $ \Omega_K(f(\eps,N)) $.
	\item Let $ (S,d) $ be a metric space, $ \rho>0 $ and $ s\in S $. Then, define $ B_\rho(s)=\{r\in S \, | \, d(s,r)< \rho		\} $.
	\item Let Y be an Euclidean space. Then we say that  $ \mu \in \cP(Y) $ satisfies the \emph{Poincar\'e inequality} with  constant $ \varrho > 0 $ if for all $ f \in H^1(\mu) $,
	\begin{align}\label{EqPI}
	\var_{\mu} \left( f \right) 
	&~:=~ \int  \left|	f - \int   	f   d\mu  	\right|^2 d\mu 
	~\leq~ \frac{1}{\varrho}  \int  \left|  \nabla f	\right|^2 d\mu  ,
	\end{align}
	where $ \nabla  $ denotes the gradient determined by the Euclidean structure of $ Y $. 
\end{itemize}
\section{The model and the results}\label{ChapModelAndResults}
This chapter is organized as follows. 
In Section \ref{SecMicrSystem} we define the microscopic model. 
Then, in Section \ref{SecMacrSystem} we introduce the macroscopic order parameter, and collect some result on the   energy landscape of the model under this order parameter.
In Section  \ref{SecMainResult} and \ref{SecRoughResult} we formulate the two main results of this paper.
For the sake of comprehensibility, in this introductory chapter, we  only provide  rough formulations of the setting and the main results.
For the full details, we refer to the Chapters \ref{ChapSmallNoise} and \ref{ChapRoughEstimates}.

\subsection{The microscopic model}\label{SecMicrSystem}
Recall the definition of the system $ \{(x_0^{N,\eps}(t), \dots ,x_{N-1}^{N,\eps}(t))\}_{t\in (0, \infty)} $ given in \eqref{EqIntroSpinSystem}. 
For $ t \in (0,\infty)$, let $ x^{N,\eps}(t) = (x_0^{N,\eps}(t), \dots ,x_{N-1}^{N,\eps}(t)) $.
Recall that we have also introduced the \emph{single-site potential} $ \psi   $ and the parameters $ J>0 $ and $ \eps\in(0,1] $.

 The \emph{Gibbs measure} $ \mu^{N,\eps} \in \cP(\R^N) $ corresponding to this model has the form
 \begin{align}\label{EqGibbs}
 \mu^{N,\eps}(dx) = \frac{1}{Z_{\mu^{N,\eps}}} \ \ee^{ - \mathrm H^{N,\eps}(x)} dx,
 \end{align}
 where $ Z_{\mu^{N,\eps}} $ is a normalization constant, and, for $x =(x_i)_{i=0,\dots,N-1} \in \R^N $, the \emph{microscopic Hamiltonian}~$\mathrm H^{N,\eps}: \mathbb{R}^{N} \ra \mathbb{R}$ is defined by 
 \begin{align}\label{EqIntroMicrHam}
 \mathrm H^{N,\eps}(x) = \ieps \sum_{i=0}^{N-1} \psi(x_i) + \ieps \frac{J}{4N } \sum_{i,j=0}^{N-1} (x_i- x_j)^2 .
 \end{align} 
 It is well-known that $ \mu^{N,\eps} $ is the unique stationary measure of the process $ (x^{N,\eps}(t))_{t\in (0, \infty)} $.

\subsection{The macroscopic variables and the macroscopic energy landscape} \label{SecMacrSystem}
The \emph{empirical mean}~$P: \mathbb{R}^N \ra \mathbb{R}$ is defined by
 \begin{align} \label{EqEmpMagn}
 Px ~=~ \frac{1}{N} \sum_{i=0}^{N-1}x_i . 
 \end{align}
This operator will act as the \emph{order parameter} for our microscopic  system.  
That is, in order to analyse the process  $ (x^{N,\eps}(t))_{t\in (0, \infty)} $ for large $ N\in \N $, we study the image of this process under the map   $ P $.
 Therefore, intuitively, $ \bar{\mu}^{N,\eps} := P_\# \mu^{N,\eps} $ describes the (long-time) macroscopic behaviour  of our microscopic model,
 and it will be crucial to study the asymptotic behaviour of this measure.

 In fact, in Proposition  \ref{PropEpsLocalCramer},  we show that, for $ \eps \in (0,1]$ small enough and for any compact set $ K \subset \R $, 
 \begin{align}\label{EqIntroLocCram}
 \bar{\mu}^{N,\eps} (dm)  ~=~
 \ee^{-N \bar{H}_{\eps}(m)} \sqrt{\frac{\varphi_{\eps}''(m) } {2\pi}} \, dm \  \left( 1 + O_K\left( \frac{1}{\sqrt{N}}\right)  \right), 
 \end{align}
 where 
 $ \varphi_\eps: \R \ra \R  $ is the so-called \emph{Cram\'er transform} of the Gibbs measure with respect to the  single-site potential (or more precisely with respect to the effective single-site potential defined   in \eqref{EqSingleSite}) and is defined in \eqref{EqEpsMuBarStar} and \eqref{EqEpsMuBarCramer}, and the function
 $ \bar{H}_{\eps}: \R \ra \R $  is defined by\footnote{The second term in the definition of $  \bar{H}_{\eps} $ is due to the fact that the interaction part can be rewritten in terms of the square of the empirical mean; see \eqref{EqMicrHam} and \eqref{EqSingleSite}.
 	In order to fully understand the motivation behind the definition of $  \bar{H}_{\eps} $ we refer to Subsection \ref{SubSecEpsMuBar}.}
 \begin{align}\label{EqIntroEpsHBar}
 \bar{H}_\eps(z) ~=~ \varphi_\eps(z) ~-~ \ieps \frac{J}{2}z^2.
 \end{align}
Since $ \bar{\mu}^{N,\eps}  $ is the law of the empirical mean of a sequence of   random variables, 
 \eqref{EqIntroLocCram} can be seen as an improvement of the well-known \emph{Cram\'er theorem} (cf.\ \cite[Theorem 6.1.3]{demzei}) for this setting.
 This explains, why we call this result \emph{local Cram\'er theorem}.
 
%
%
%
%
%
%
%
%

Equation \eqref{EqIntroLocCram} shows that, for large $ N $ and for $ \eps  $  small enough, $ \bar{\mu}^{N,\eps}$ is very similar to a Gibbs measure with    $ \bar{H}_\eps $ playing the role of the energy function.  
Therefore,
we consider $ \bar{H}_\eps $ as the \emph{macroscopic Hamiltonian} of the system. 
This suggests   to study the  analytic properties of the function $ \bar {H}_\eps $.
We do this
%
 in Lemma \ref{LemEpsEnergyLandscape}, where we  show that, for $ \eps   $ small enough, 
 $ \bar{H}_\eps $ is a symmetric double-well function   with  two global minima at $ -m^\star_\eps<0 $ and $ m^\star_\eps>0 $, and with a local maximum at $ 0 $. 
 That is, $ \bar{H}_\eps  $ is of the form given in Figure \ref{Figdwh}. 
 \vspace{-0.1cm}
 \begin{figure}[h]
	\centering
	\includegraphics[width=0.3\linewidth, height=0.3\linewidth]{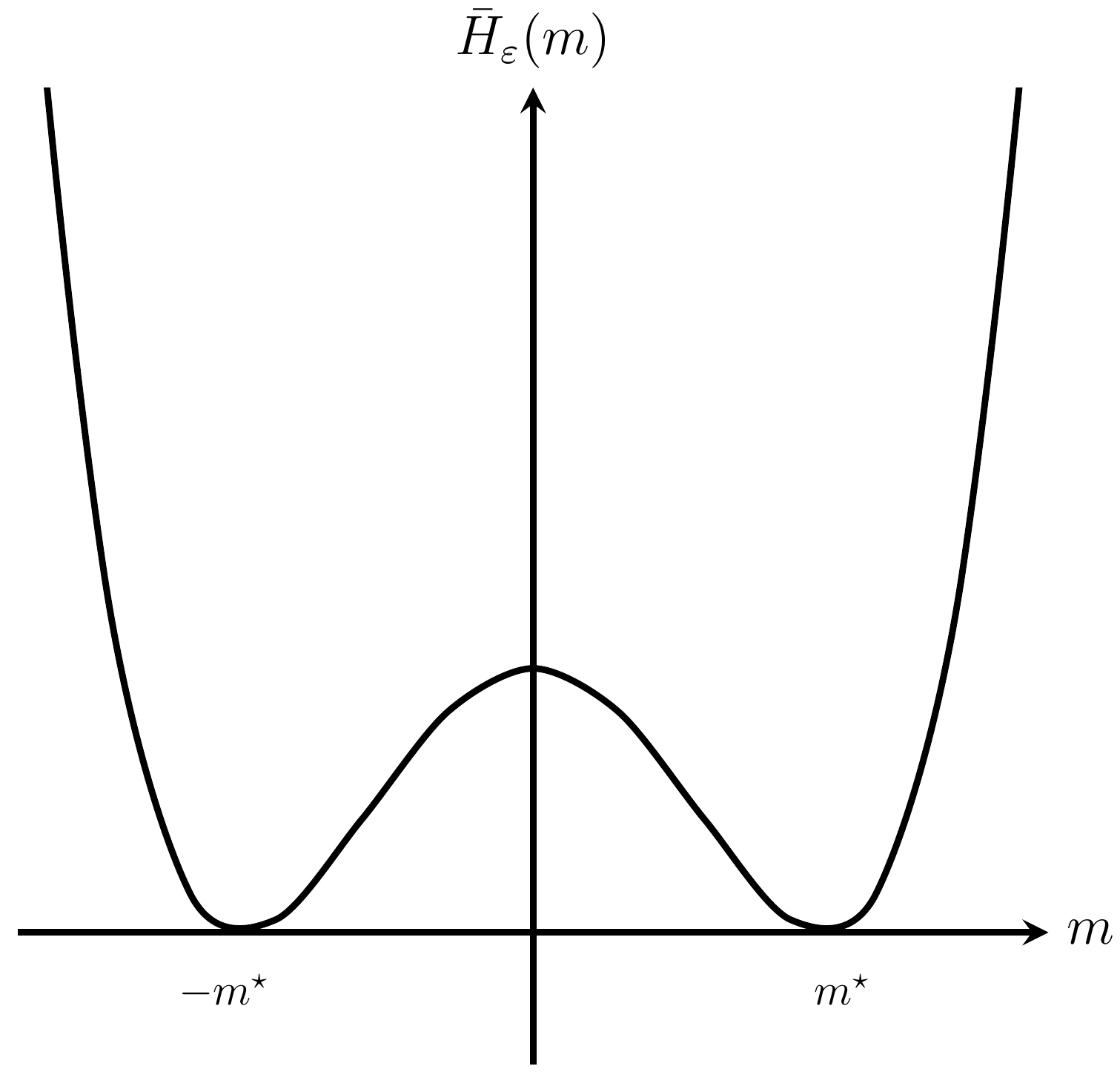}
	 	\caption{Form of the graph of the function $ \bar{H}_\eps  $.}
	\label{Figdwh}
\end{figure}
 
\subsection{The Eyring-Kramers formula at low temperature} \label{SecMainResult}
 The fact that the macroscopic free energy $ \bar{H}_\eps $ has two global minima at $ -m^\star_\eps $ and $ m^\star_\eps $ suggests that our system exhibits metastable behaviour and the hyperplanes $ P^{-1} (-m^\star_\eps) $ and  $ P^{-1} (m^\star_\eps) $ are the \emph{metastable states} of the system. 
 More precisely,
  provided that the initial condition of our system is concentrated in a small region around the   hyperplane $ P^{-1} (-m^\star_\eps) $, 
  we expect  that the average transition time   to hit  a small region around $ P^{-1} (m^\star_\eps) $ fulfils Kramers' law.  
This is the content of the main result of this paper, which is formulated in Theorem \ref{ThmIntroEpsEKF}. 
(For  a more detailed explanation and motivation of the result, we refer to Section \ref{SecEpsEyring}.)
In this theorem, we suppose that
\begin{align}\label{EqIntroJAss}
J>1.
\end{align} 
The reason for this assumption is  that, in this regime, we are able to control the microscopic fluctuations via functional inequalities.
We explain this in further detail in Remark \ref{RemarkJAss}. 
To show the metastable behaviour for the case $ J\leq 1 $  is the content of future research.   
 \begin{theoremm}\label{ThmIntroEpsEKF}
 	Suppose   \eqref{EqIntroJAss}.
%
Then, for $ \eps \in (0,1] $ small enough, and for $ N\in\N $ large enough,
\begin{align}\label{EqIntroMain}
\E_{\nu_{B^-,B^+}}[\cT]
~=~ 
\frac{2\pi \,  \sqrt{\varphi_\eps''(-m^\star_\eps) } \ 	\ee^{N(\bar{H}_\eps (0)-\bar{H}_\eps(-m^\star_\eps ))}}
{\eps\ \sqrt{\bar{H}_\eps''(-m^\star_\eps )\, |\bar{H}_\eps''(0)|   \, \varphi_\eps''(0)}} 
\,\left(1+O\left( \sqrt{\frac{\log(N)^3}{N}}\right) +O\left(\eps \right)\right),
\end{align}	
 	where 
 	\begin{itemize}
 		\item $ \cT ~=~ \inf\{t>0 \, | \, Px^{N,\eps}(t) ~\geq~ m^\star_\eps -\eta  \}   $ for some specific $ \eta ~=~ \Omega(\sqrt{\log(N)/N} \, \sqrt{\eps \log(\eps^{-1})}) $ (see \eqref{EqEpsEta} and $ \eqref{EqEpsEta2} $),
 		\item $ \nu_{B^-,B^+} $ is a probability measure, which is concentrated on the set 
 		$  P^{-1}(-m^\star_\eps+\eta)$ and is called \emph{last-exit biased distribution on $ B^- $} (see Definition \ref{DefPotTheory} and the comments afterwards for the definition and an intuitive description  of the measure $ \nu_{B^-,B^+} $, and see \eqref{EqEpsMetasets} for the definition of the sets $ B^- $ and $ B^+ $) and
 		\item  $ \E_{\nu_{B^-,B^+}}[\cT] ~=~ \int \E_{x}[\cT] \, d\nu_{B^-,B^+}(x)$.
 	\end{itemize}
  
 \end{theoremm}
Note that, since  $ \eta ~=~ \Omega(\sqrt{\log(N)/N} \, \sqrt{\eps \log(\eps^{-1})}) $ and $ \nu_{B^-,B^+} $ is concentrated on  
$  P^{-1}(-m^\star_\eps+\eta)$, the initial condition of our system is initially close to the metastable state given by the hyperplane $  P^{-1}(-m^\star_\eps)$. 
However, the initial condition is random, which is a weakness of our results. 
As it is suggested by one of the anonymous referees, we believe that we can circumvent this issue by applying the techniques of the papers \cite{ArmendarizGrosskinskyLoulakis} and \cite{gaudilliereMilanesi}. 
This is planned for future research. 

In order to prove Theorem \ref{ThmIntroEpsEKF} in Chapter \ref{ChapSmallNoise}, we proceed as follows. 

We first collect in Section \ref{SecEpsPrelim} three  important ingredients.
More precisely,  in Subsection \ref{SubSecEpsMuBar} we state the local Cram\'er theorem (i.e.\  \eqref{EqIntroLocCram}), which 
 is the key  tool in our proof to go from the microscopic variables to the macroscopic ones.  
Then, in Subsection \ref{SubSecEpsHBar} we study the analytic properties of the  macroscopic Hamiltonian $ \bar{H}_\eps$ and  
show that its graph is of the form given in Figure \ref{Figdwh}. 
And as the third ingredient, we collect in Subsection~\ref{SubsecPotTheory} the key elements from potential theory that allow us to rewrite the average transition time, $ \E_{\nu_{B^-,B^+}}[\cT] $, in terms of quantities from electric networks. 
Namely, we show that $ \E_{\nu_{B^-,B^+}}[\cT] $ is equal to the quotient of the \emph{mass of the equilibrium potential} and the \emph{capacity} (see Lemma \ref{LemMeanTransition}).

After we collect these ingredients, we introduce all missing objects of Theorem \ref{ThmIntroEpsEKF} and provide the main steps of its proof in Section \ref{SecEpsEyring}.
This proof is divided into three steps. 
The first step consists of showing the correct upper bound for the capacity in Section \ref{SecEpsUpperB}. 
This is done    by using the so-called \emph{Dirichlet principle} 
(see Lemma \ref{LemDirchletPrinc}).
Here we have to choose an appropriate test function and compute the asymptotic value of the corresponding \emph{Dirichlet form}. 
In the second step, we compute in Section \ref{SecEpsLowerB} the lower bound on the capacity by an adaptation of  the so-called  \emph{two-scale approach}, which was initiated in the paper \cite{GORV}.
This is the main point, where we use the assumption \eqref{EqIntroJAss}. 
We explain this in further detail in Remark \ref{RemarkJAss}. 
Finally, we compute in Section \ref{SecEpsNum} the asymptotic value of the mass of the equilibrium potential. 
This follows from applying standard Laplace asymptotics, and by exploiting that the graph of $ \bar{H}_\eps$ has the form of a double-well function (cf.\ Figure \ref{Figdwh}).

\subsection{Rough estimates at high temperature} \label{SecRoughResult}
We also consider in this paper the situation where the microscopic fluctuations in the system do not become negligible. That is, we study the system $ (x^{N,1}(t))_{t\in (0, \infty)} $ given by \eqref{EqIntroSpinSystem} with $ \eps =1 $.
It is not surprising that the  methods that we use for the  setting in Section \ref{SecMainResult} do not yield the precise Eyring-Kramers formula in the present case. 
The reason is that in this case, the \emph{entropy} of the paths matters substantially, i.e.\ the microscopic fluctuations do not allow to restrict solely to the macroscopic variables determined by the map $ P $.
We believe that, in order to obtain the Eyring-Kramers formula, we need to consider the \emph{empirical distribution}~$K: \mathbb{R}^N \ra \cP(\mathbb{R})$,  
\begin{align} 
Kx ~=~ \frac{1}{N} \sum_{i=0}^{N-1}\delta_{x_i}  
\end{align} 
as the order parameter instead of the empirical mean determined by the map $ P $. 
A heuristic argument for that is the following.

For all $ N\in \mathbb{N} $ and $ t\in (0, \infty) $,  let $ \gamma_N (t) =  K^N(x^{N,1}(t))  $.
Using \cite[(1.8)]{dawgaequasi}, \cite[Proposition 3.36]{BBGradFlow} and the formal Riemannian setting on the Wasserstein space introduced in \cite[Chapter 1]{Ott01} (or in \cite[Section 9.4]{FengKurtz}), we know that the evolution of $ (\gamma_N (t))_{t\in (0, \infty)} $ can be described by 
a diffusion-like equation of the form 
\begin{align}\label{EqB11}
d\langle \gamma_N(t),f\rangle= \langle\mathrm{Grad}_{\mathrm{Wass}} \mathcal{F}(\gamma_N(t)),f\rangle dt +\frac 1{\sqrt N} dM^f_t \qquad \text{for all smooth functions }f,
\end{align}
where $ \langle \cdot, \cdot \rangle $ denotes the duality product on the space of smooth functions, $ \cF $ is the \emph{free energy functional on the Wasserstein space} corresponding to $ (\gamma_N (t))_{t\in (0, \infty)} $  (see \cite[Section 3.4]{BBGradFlow}), $ \mathrm{Grad}_{\mathrm{Wass}} \cF $  is the formal gradient of $ \mathcal{F} $ in the Wasserstein space interpreted in the sense of distributions as in \cite[Definition 9.36]{FengKurtz}, and  $M^f_t$ is a martingale  for all such $f$.
Equation \eqref{EqB11}  suggests intuitively that the random perturbations of the   process $ (\gamma_N (t))_{t\in (0, \infty)} $ are of order $ 1/\sqrt{N}$; see \cite[(1.9)]{dawgaequasi} and the comments afterwards for more details. Moreover, \eqref{EqB11} shows that the potential landscape for this process is given by the free energy functional $ \mathcal{F} $.
This provides a heuristic justification that, 
in the limit as $ N \ra \infty $, one is in a weak noise setting analogously to the setting  in \cite[(1.1)]{BEGK04} but in the infinite dimensional Wasserstein space. 
We therefore believe that,  by adapting  the strategy of the proof of \cite[Theorem 3.2]{BEGK04}   to the Wasserstein setting,  it should be possible to verify Kramers' law   for $ (\gamma_N (t))_{t\in (0, \infty)} $.

More precisely, the latter should be done as follows. 
First, one uses standard results from potential theory in order to represent expected transition times between the metastable states associated to $ (\gamma_N (t))_{t\in (0, \infty)} $ in terms of Dirichlet forms on the Wasserstein space.  
Second, one has to derive sharp asymptotics of these Dirichlet forms in the limit as $ N \rightarrow \infty $.
At this point one should benefit   from the results obtained in \cite{schiavo2018} and \cite{RenesseSturm}, where a Malliavin calculus is constructed on the Wasserstein space.
The rigorous implementation of these thoughts is left for future research.

We note that, in order to show Kramers' law for $ (\gamma_N (t))_{t\in (0, \infty)} $, there are also three other approaches to metastability which seem to be applicable. 
The first one is  based on the characterization of Markov processes as unique solutions of martingale problems (see \cite{Landim19} for an introduction), the second one is based on the  analysis of the corresponding Poisson equation (see \cite{Seo}), and the third one is based on an application of the so-called \emph{Sandier-Serfaty approach} (see \cite{SandierSer} for the Sandier-Serfaty approach and \cite{ArnrichMielkePeletierSavareVeneroni} for its application to show Kramer's law).  

However, we can still obtain estimates for the mean transition time if the order parameter is given by the empirical mean, i.e.\ if the macroscopic variables are determined by the map $ P $. 
Here we   replace $ \psi $ by    single-site potentials of the form  $ z\mapsto \Psi(z) -\frac{J}{2}z^2 $, where $ \Psi:\R\ra\R $ is a symmetric and bounded perturbation of a strictly convex function (cf.~Assumption \ref{Assumptions}).
Moreover, 
  we have to assume that 
$ J > \int_{\R}  \ee^{-\Psi(z)  } \, dz  / (\int z^2 \,\ee^{-\Psi(z)  } \, dz)   $. 
This condition is necessary for $ \bar{H}_1 $ to be of the form of a  double-well function. 
(Note that the objects $\varphi_1,\bar{H}_1, \nu_{B^-_1,B^+_1}, \cT $ are defined as in Theorem \ref{ThmIntroEpsEKF} but with $ \psi $ replaced by  $z\mapsto \Psi(z) -\frac{J}{2}z^2$ and with $ \eps =1 $.) That is, in the case 
$ J \leq \int_{\R}  \ee^{-\Psi(z)  } \, dz  / (\int z^2 \,\ee^{-\Psi(z)  } \, dz)   $, we do not have a metastable behaviour for the system under the order parameter given by the empirical mean.
This is  different than in Theorem \ref{ThmIntroEpsEKF}, where we can show that $ \bar{H}_\eps $ is a double-well function also in the case $ J\leq 1 $ (see Lemma \ref{LemEpsEnergyLandscape}). 
The main result is the following statement.
 \begin{theoremm}\label{ThmIntroEKF}
	 Suppose Assumption \ref{Assumptions}.	Let $ \pm m^\star_1$ be the two global minimisers of the macroscopic Hamiltonian $ \bar{H}_1$.
	 	Then, for all $ N $ large enough and   for some $ a>0 $, which is independent of $ N $,
\begin{align}
\label{EqRoughEstL}
\E_{\nu_{B^-_1,B^+_1}}[\cT]
&~\geq~ 
\frac{2\pi \  \sqrt{\varphi_1''(-m^\star_1) } \ 	\ee^{N(\bar{H}_1(0)-\bar{H}_1(-m^\star_1))}}
{\sqrt{\bar{H}_1''(-m^\star_1)\, |\bar{H}_1''(0)|   \, \varphi_1''(0) }} 
\,\left(1+O\left( \sqrt{\frac{\log(N)^3}{N}}\right)  \right),\ \text{ and}\\
\E_{\nu_{B^-_1,B^+_1}}[\cT]
&~\leq~ (1+a)
\frac{2\pi \  \sqrt{\varphi_1''(-m^\star_1) } \ 	\ee^{N(\bar{H}_1(0)-\bar{H}_1(-m^\star_1))}}
{\sqrt{\bar{H}_1''(-m^\star_1)\, |\bar{H}_1''(0)|   \, \varphi_1''(0) }} 
\,\left(1+O\left( \sqrt{\frac{\log(N)^3}{N}}\right)  \right).
\label{EqRoughEstU}
\end{align}
 \end{theoremm}

 The proof of this result is organized in the same way as the proof of Theorem \ref{ThmIntroEpsEKF}, and is given in Chapter \ref{ChapRoughEstimates}.

Finally, we compare the results of Theorem \ref{ThmIntroEKF} with the conjecture given in  \cite[Theorem 4]{dawgaetunneling}. 
The authors of~\cite{dawgaetunneling} expect that for all $ \delta > 0 $, there exists $ N_\delta\in \N$ such that for $ N \geq N_\delta $ the average transition time is confined to the interval $ [  \ee^{N(\Delta-\delta)},   \ee^{N(\Delta+\delta)} ] $, where $ \Delta = \bar{H}_1(0)-\bar{H}_1(-m^\star_1) $. 
Here we have used the simple fact that $ \bar{H}_1(0)-\bar{H}_1(-m^\star_1) $ can be written in terms of the free energy functional $ \cF $ from \eqref{EqB11}; see \cite[Lemma 2.4]{Ba2020} or \cite[Section IV.2]{PatThesis} for more details on this relation.
In contrast to this, Theorem \ref{ThmIntroEKF} states that, for $ N $ large enough,  the average transition time is confined to an interval  $ [\alpha\, \ee^{N\Delta}, \beta\, \ee^{N\Delta} ] $, where   $ 0<\alpha<\beta< \infty $ are independent of $ N $.
Hence, in terms of asymptotic precision, our result is an improvement of the conjecture stated in \cite{dawgaetunneling}. 

However, we   note that the initial condition in our setting is very different than in the conjecture formulated in \cite[Theorem 4]{dawgaetunneling}. 
The initial configuration in Theorem \ref{ThmIntroEKF}  is randomly drawn according to the last-exit distribution $ \nu_{B^-_1,B^+_1} $, which is concentrated on  
$  P^{-1}(-m^\star_\eps+\eta)$ (which, similarly as it is explained after Theorem \ref{ThmIntroEpsEKF}, is close to the metastable state $  P^{-1}(-m^\star_\eps)$, since $ \eta = \Omega(\sqrt{\log(N)/N} )$).
In contrast to this, the  initial condition in the conjecture formulated in \cite[Theorem 4]{dawgaetunneling} is given by a sequence of deterministic points $ x^N \in \R^N $ such that the corresponding sequence of empirical measures $ (Kx^N)_{N\in\N} $ converges (in a topology which is slightly stronger than the weak topology) to a measure $ \nu $ from the \emph{basin of attraction} associated to the metastable state $  P^{-1}(-m^\star_\eps)$. More explanations and details on the basin of attraction and the energy landscape in this point of view are given in \cite[Proposition 1.4 and Lemma 2.4]{Ba2020}.

\section{The Eyring-Kramers formula at low temperature} \label{ChapSmallNoise}
In order to simplify the notation, we  omit  in this chapter  the superscripts $ N $ and $ \eps $. For example, we abbreviate $x=  x^{N,\eps},$  $\mathrm H= \mathrm H^{N,\eps} $ and $ \mu=  \mu^{N,\eps} $.
Moreover, we rewrite the microscopic Hamiltonian $ \mathrm H  $ (see \eqref{EqIntroMicrHam}) as
\begin{align}\label{EqMicrHam}
\mathrm {H}(x) ~=~ \ieps \sum_{i=0}^{N-1} \psi_J(x_i) -\ieps \frac{J}{2N } \sum_{i,j=0}^{N-1} x_ix_j
~=~ \ieps \sum_{i=0}^{N-1} \psi_J(x_i) -N\ieps \frac{J}{2 } (Px)^2,
\end{align}
where the \emph{(effective) single-site potential} $ \psi_J:\R\ra\R $ is defined by 
\begin{align}\label{EqSingleSite}
\psi_J(z) ~=~ \psi(z) ~+~ \frac{J}{2} z^2 ~=~  \frac{1}{4} z^4 ~+~ \frac{J-1}{2} z^2  .
\end{align}
Recall that, for the strength of the interaction part in this model,  we  assume that
 \begin{align}\label{EqJAss}
J>1.
\end{align}
The outline of this chapter is given   after Theorem \ref{ThmIntroEpsEKF} in  Section \ref{SecMainResult}.  

\subsection{Preliminaries} \label{SecEpsPrelim}

\subsubsection{Local Cram\'er theorem}\label{SubSecEpsMuBar}
This subsection extends the results from \cite[Proposition 31]{GORV} or \cite[Section 3]{MeOt}.
The goal is to find an asymptotic representation     for the measure $ \bar{\mu} = P_\# \mu $. 

The first observation is that   we can disintegrate $ \mu $ with respect to $ \bar{\mu} $ explicitly using the \emph{coarea formula} (\cite[Section 3.4.2]{EvGar}). 
Indeed, as in \cite[p.\ 306]{GORV}, we obtain that 
\begin{align}\label{EqDisintegration}
\int_{\R^N} f(x) \,  d\mu(x) ~=~ \int_{\R} \int_{P^{-1}(m)} f(x) \, d\mu_m(x)  \,  d\bar{\mu}(m)  
\end{align}
for all bounded and measurable $ f:\R^N \ra \R $,
where
the conditional measures (or fluctuation measures)~$\mu_m$ are given by
\begin{align}\label{EqFluctuationMeasure}
d\mu_m(x) ~=~  \mathbbm{1}_{P^{-1}(m)}(x)  \, \ee ^{ -  \ieps\sum_{i=0}^{N-1}\psi_J(x_i)   } \, d\mathcal{H}^{N-1} (x) \,  \ee^{N\varphi_{N,\eps} (m)} .
\end{align} 
Here,  $ \mathcal{H}^{N-1} $ denotes the  $ (N-1) $-dimensional Hausdorff measure and $\varphi_{N,\eps}: \mathbb{R} \to \mathbb{R}$ is defined by 
\begin{align}\label{EqEpsMuBar}
\varphi_{N,\eps} (m) ~=~ - \frac{1}{N} \log \int_{P^{-1}(m) } \ee^{ -  \ieps\sum_{i=0}^{N-1}\psi_J(x_i) }\,  d\mathcal{H}^{N-1} (x).
\end{align}
Moreover, for $ \bar{\mu} $, we obtain the representation
\begin{align}
d\bar{\mu} (m) ~=~ \frac{1}{Z_{\bar{\mu}}} \,  \ee^{ - N \varphi_{N,\eps}(m) + \ieps N \frac{J}{2}  m^2 }\, dm
\end{align}
for some normalization constant $ Z_{\bar{\mu}} $.

It turns out that the asymptotic behaviour of $ \bar{\mu} $ will be determined by the \emph{Cram\'er transform} $ \varphi_\eps $ of the measure $\ee^{-\ieps\psi_J(z)} dz $, which is defined  as the \emph{Legendre transform} of the function  
\begin{align}\label{EqEpsMuBarStar}
 \R \, \ni \, \sigma ~\mapsto~ \varphi^*_\eps(\sigma)  ~=~   \log \int_{\R} \ee^{ \sigma z-    \ieps\psi_J(z) }\,  dz \, \in \,\R.
\end{align}
That is, 
\begin{align}\label{EqEpsMuBarCramer}
\varphi_\eps(m)~=~ \sup_{\sigma \in \mathbb{R}} \left(\sigma m - \varphi^*_\eps(\sigma) \right).
\end{align}
Moreover, for $ \sigma \in \R $, we define the probability measure $ \mu^{\eps,\sigma} \in \cP(\R) $  by 
\begin{align} \label{EqEpsGCMeasure}
d\mu^{\eps,\sigma}(z)  ~=~ \ee^{-\varphi^*_\eps(\sigma) +\sigma z- \ieps\psi_J(z)} \, dz~=~ 	\frac{ \ee^{ \sigma z- \ieps\psi_J(z)} }{ \int_{\R} \   \ee^{\sigma \bar{z}-\ieps\psi_J(\bar{z})} \, d\bar{z}}\, dz.
\end{align}
$ \mu^{\eps,\sigma}   $ is closely related to $ \varphi^*_\eps $ and $ \varphi_\eps $. This can be seen in Section \ref{SecLegendre} in the appendix, where we list several properties of the Cram\'er  transform  that are used in this paper. 
In particular, we have that $ \varphi^*_\eps $ and $ \varphi_\eps $	are strictly convex and smooth, and hence, $  \varphi_\eps''(m)^{\frac{1}{2}} $ is well defined for all $ m \in \R $.

In the following proposition we state the \emph{local Cram\'er theorem}. (Recall that in Section \ref{SecMacrSystem} we explain why this result is called like that.) 
Very similar versions of this result are already known in the literature; see for instance \cite[Proposition 31]{GORV} or \cite[Section 3]{MeOt}. 
The main novelty here is that the result is uniform in $ \eps \ll 1$. 

\begin{proposition}[Local Cram\'er theorem] \label{PropEpsLocalCramer}
Suppose \eqref{EqJAss}.
Let $ K \subset \R $ be compact. 
Then,  there exist $ N_K\in \N $ and $ \eps_K>0 $ such that,  for all  $ 0<\varepsilon<\varepsilon_{K} $, $ N \geq N_K $ and   $ m\in K $,
	\begin{align}\label{EqEpsLocalCramer}
	\ee^{    -N\varphi_{N,\eps} (m)   } \ = \ \ee^{   - N \varphi_\eps (m)   }~ \frac{\sqrt{\varphi_\eps''(m)}}{\sqrt{2\pi}} ~\left( 1+O_K\left(\frac{1}{\sqrt{N}}\right)  
	\right).
	\end{align} 
	In particular, this implies that  
		\begin{align}\label{EqEpsLocalCramerMu}
	d \bar{\mu}(m) \ = \  \frac{1}{Z_{\bar{\mu}}} \  \ee^{   - N \bar{H}_\eps (m)   } ~\frac{\sqrt{\varphi_\eps''(m)}}{\sqrt{2\pi}} ~\left( 1+O_K\left(\frac{1}{\sqrt{N}}\right) 
	\right)  \, dm,
	\end{align} 
	where 
	\begin{align}\label{EqEpsHBar}
	\bar{H}_\eps(z) ~=~ \varphi_\eps(z) ~-~ \ieps \frac{J}{2}z^2.
	\end{align}
\end{proposition}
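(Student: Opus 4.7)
The plan is to reduce the statement to a local central limit theorem for i.i.d.\ random variables with single-site density proportional to $\ee^{-\psi_J(z)/\eps}$, and then to establish this local CLT with error estimates that are uniform in $m \in K$ and $\eps \ll 1$.

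The first step is to recognise $\ee^{-N\varphi_{N,\eps}(m)}$ as (up to a normalisation) the Lebesgue density of an empirical mean. Applying the coarea formula to $P$, whose gradient has constant norm $1/\sqrt{N}$, one obtains
\begin{equation*}
\ee^{-N\varphi_{N,\eps}(m)} \;=\; \frac{F_\eps^N}{\sqrt{N}}\, \rho_{N,\eps}(m),
\end{equation*}
where $F_\eps := \int_\R \ee^{-\psi_J(z)/\eps}\, dz = \ee^{\varphi^*_\eps(0)}$ and $\rho_{N,\eps}$ is the density at $m$ of $\frac{1}{N}\sum_{i=0}^{N-1} Z_i$ with $Z_i$ i.i.d.\ of density $\ee^{-\psi_J(z)/\eps}/F_\eps$. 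The claim \eqref{EqEpsLocalCramer} is then equivalent to
\begin{equation*}
\rho_{N,\eps}(m) \;=\; \sqrt{\tfrac{N\,\varphi_\eps''(m)}{2\pi}}\; \ee^{-N(\varphi_\eps(m)+\log F_\eps)}\;\bigl(1+O_K(N^{-1/2})\bigr).
\end{equation*}

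The second step is to center the problem by an exponential tilt. Choose $\sigma=\sigma_\eps(m):=\varphi_\eps'(m)$; by the Legendre duality collected in the appendix this is the unique parameter for which the tilted single-site measure $\mu^{\eps,\sigma}$ from \eqref{EqEpsGCMeasure} has mean $m$ and variance $(\varphi_\eps^*)''(\sigma) = 1/\varphi_\eps''(m)$. The standard change-of-measure identity applied to $S_N=\sum Z_i$ at $s=Nm$ gives $\rho_{N,\eps}(m) = \ee^{-N(\varphi_\eps(m)+\log F_\eps)}\,\tilde\rho_{N,\eps}(m)$, where $\tilde\rho_{N,\eps}$ is the density of the empirical mean of i.i.d.\ copies of $\mu^{\eps,\sigma_\eps(m)}$. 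Thus everything reduces to the local CLT at the mean
\begin{equation*}
\tilde\rho_{N,\eps}(m) \;=\; \sqrt{\tfrac{N\,\varphi_\eps''(m)}{2\pi}}\;\bigl(1+O_K(N^{-1/2})\bigr).
\end{equation*}

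The third step is to prove this local CLT with Fourier inversion, following the Gnedenko/Esseen scheme: write the density via $\hat\phi_{\eps,m}^N$, where $\hat\phi_{\eps,m}$ is the characteristic function of the standardised variable $Y_{\eps,m}:=\sqrt{\varphi_\eps''(m)}\,(Z-m)$ under $\mu^{\eps,\sigma_\eps(m)}$; split the inversion integral into a neighbourhood of the origin, where a third-order Taylor expansion produces the Gaussian $(2\pi)^{-1/2}$ up to an $O(N^{-1/2})$ correction coming from the third cumulant, and its complement, where one controls $|\hat\phi_{\eps,m}|^N$ directly. The final formula \eqref{EqEpsLocalCramerMu} then follows by multiplying \eqref{EqEpsLocalCramer} by $\ee^{(J/(2\eps))Nm^2}$ and absorbing the constant into~$Z_{\bar\mu}$.

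The main obstacle is the uniformity in $\eps \downarrow 0$. As $\eps\to 0$ the tilted measure $\mu^{\eps,\sigma_\eps(m)}$ concentrates sharply around the unique minimiser $z^\star_\eps(m)$ of $z\mapsto \psi_J(z)/\eps-\sigma_\eps(m)z$, with variance of order $\eps$; in particular $\varphi_\eps''(m) = \psi_J''(m)/\eps + O(1)$ via the equivalence-of-ensembles/Laplace expansions done in the appendix, and similar expansions control the third absolute moment of $Y_{\eps,m}$ uniformly in $m\in K$. The delicate point is the tail bound: one has to show that there exist $c,\delta>0$ independent of $\eps$ such that $|\hat\phi_{\eps,m}(\xi)|\leq\ee^{-c\xi^2}$ for $|\xi|\leq\delta\sqrt N$ and $|\hat\phi_{\eps,m}(\xi)|\leq 1-c$ for $|\xi|>\delta\sqrt N$. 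This is the technical core of the proof, and it is handled by performing the natural $\sqrt\eps$--rescaling $y=(z-z^\star_\eps(m))/\sqrt\eps$, which, again via Laplace's method, shows that the family of rescaled densities converges to Gaussians uniformly in $m\in K$, so that the rescaled characteristic functions are uniformly bounded away from $1$ off any fixed neighbourhood of the origin. Once this uniform Fourier control is established, the Esseen estimate yields the $O_K(N^{-1/2})$ error with constants that depend only on $K$, completing the proof.
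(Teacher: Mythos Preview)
Your strategy coincides with the paper's: exponential tilt to $\mu^{\eps,\varphi_\eps'(m)}$, coarea formula to express $\ee^{-N\varphi_{N,\eps}(m)+N\varphi_\eps(m)}$ as the density at zero of the standardised i.i.d.\ sum, and then a local CLT via Fourier inversion with a near-origin/tail split, the uniform-in-$\eps$ moment bounds being supplied by Laplace asymptotics exactly as you describe (the paper packages these as Lemma~\ref{LemMoments}).

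The one place where your sketch differs from the paper, and is in fact slightly incomplete, is the tail control. First a notational slip: the bounds you state on the single-site characteristic function $\hat\phi_{\eps,m}(\xi)$ involve $\sqrt N$, but $\hat\phi_{\eps,m}$ does not depend on $N$; presumably you mean $|\hat\phi_{\eps,m}(\eta)|\le \ee^{-c\eta^2}$ for $|\eta|\le\delta$ and $|\hat\phi_{\eps,m}(\eta)|\le\lambda<1$ for $|\eta|>\delta$. More substantively, the second bound alone does not make the tail integral $\int_{|\hat\xi|>\delta\sqrt N}|\hat\phi_{\eps,m}(\hat\xi/\sqrt N)|^N\,d\hat\xi$ finite; one still needs an integrable envelope that is uniform in $\eps$. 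The paper obtains this not through a Gaussian-approximation argument but via a single integration by parts (Lemma~\ref{LemFourierTrafo}), which yields the $\eps$-uniform decay $|\langle\ee^{i\hat z\hat\xi}\rangle|\le C_K/|\hat\xi|$; two of the $N$ factors are estimated this way to produce an integrable $C_K^2/\hat\xi^{\,2}$, and the remaining $N-2$ by $\lambda^{N-2}$, giving $|II|\le CN\lambda^{N-2}$. Your $\sqrt\eps$-rescaling heuristic could be made to deliver a comparable uniform decay, but that step has to be carried out explicitly --- ``bounded away from $1$'' is not enough.
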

\begin{proof}
The proof is postponed to Section  \ref{SecProofLocalCramer} in the appendix.
\end{proof}

\subsubsection{Analysis of the energy landscape}\label{SubSecEpsHBar}
Proposition \ref{PropEpsLocalCramer}  indicates that the graph of  $ \bar{H}_\eps $   determines the macroscopic energy landscape  of our system  if the order parameter is given by the empirical mean (see also Section \ref{SecMacrSystem} for more comments). 
%
%
This suggests   to study the  analytic properties of  $ \bar {H}_\eps $, which is the content of the following lemma.

\begin{lemma}\label{LemEpsEnergyLandscape}
Suppose that $ J>0 $. Then, for all $ \eps\in (0,1] $  small enough, 
	\begin{enumerate}[(i)]
		\item 
		 $ \ \lim_{|t| \to \infty } \frac{1}{t^2}\varphi_\eps(t) \ = \ \infty $, \   $ \lim_{|t| \to \infty } \frac{1}{t^2} \bar{H}_\eps(t)  \ = \ \infty $, and
		\item 
$\bar{H}_\eps$ has exactly three critical points located at~$- m^\star_\eps $,~$0$ and~$ m^\star_\eps $ for some\footnote{Recall the definition of the $ O$-,$O_K$-,$\Omega$-,$\Omega_K $-notation which is given in the introduction of this paper; in this particular case, this means that there exists a constant $ c >0 $ such that $ c^{-1} \eps \leq |m^\star_\eps - 1| \leq  c \, \eps $ for all $ \eps\in (0,1] $  small enough.}
 $ m^\star_\eps = 1 +\Omega(\eps) $.
Moreover,  $ \bar{H}_\eps''(0) < 0 $, $ \bar{H}_\eps''(m^\star_\eps ) > 0 $ and $ \bar{H}_\eps''(- m^\star_\eps ) >0 $.
That is,  $\bar{H}_\eps$ has a local maximum at~$0$, and   the two global minima of $\bar{H}_\eps$ are located  at~$\pm m^\star_\eps $.
	\end{enumerate}
\end{lemma} 
\begin{proof}
Part \emph{(i)} follows from a simple argument, which is based on the fact that $ \psi_J $ is super-quadratic at infinity and on H\"older's inequality.
For instance, a proof can be found in \cite[Lemma III.2.6]{PatThesis} for a slightly more general setting.

To show part \emph{(ii)}, first note that the condition $ \bar{H}_\eps'(m) = 0 $ is equivalent to
\begin{align}\label{EqEpsCritPoint1}
m ~=~ (\varphi_\eps^*)'\left( \ieps J m\right)
~=~ \int_\R z \, \ee^{ -\varphi^*_\eps\left(\ieps J m\right) +\ieps J z m - \ieps \psi_J(z) } \, dz,
\end{align}
where we used \eqref{EqLegendre} to get the first equality, and \eqref{EqLegendreDer2} and \eqref{EqEpsGCMeasure} to get the second equality.
Then, we know from  \cite[Proposition 3.1 and Theorem 3.2]{HerrTug} that, for $ \eps $ small enough, there exist exactly three solutions $\pm m^\star_\eps $ and $0$ 
for \eqref{EqEpsCritPoint1}, where $ m^\star_\eps = 1 +\Omega(\eps) $.

We now show that $ \bar{H}_\eps''(0) < 0 $ in
 the case $ J>1 $.
 Using \eqref{EqLegendreDer} and that  $  \varphi_\eps'(0) =0  $ we have that  for $ \eps $ small enough,    
\begin{align}\label{EqCurvatureSaddle}
\bar{H}_\eps''(0)\,=\, \left( \int_\R z^2 \, d\mu^{\eps,0}(z) \right) ^{-1}\,-\, \ieps \, J 
\,=\,  \frac{J-1}{\eps} \, (1+\Omega(\eps)) \,-\, \ieps  \, J \,<\,0 , 
\end{align}
where we have applied  Corollary \ref{CorAsymp2} in the second equality with $U =\psi_J $ (and hence $z_m=0$). 
In order to show that $ \bar{H}_\eps''(0) < 0 $ in
the case  $ J<1 $, note that here the function $ \psi_J $ admits exactly two minima at $   \pm \sqrt{1-J} $. Hence,  by standard Laplace asymptotics we have  that for $ \eps  $ small enough
\begin{align}
&\bar{H}_\eps''(0)\,=\, \left( \int_\R z^2 \, d\mu^{\eps,0}(z) \right) ^{-1}\,-\, \ieps \, J 
\,=\, \left( \frac{ \int_\R z^2 \, \ee^{- \ieps\psi_J(z)} dz}{ \int_{\R} \   \ee^{-\ieps\psi_J(\bar{z})} }\right) ^{-1}\,-\, \ieps \, J \\
&~=\, \left( \frac{2\ee^{- \ieps\psi_J(\sqrt{1-J})} \left(   \sqrt{\frac{2\pi\, \eps}{\psi_J''(\sqrt{1-J})}} (1-J) + O(\eps)\right) }{2\ee^{- \ieps\psi_J(\sqrt{1-J})}\left( \sqrt{\frac{2\pi\, \eps}{\psi_J''(\sqrt{1-J})}}+ O(\eps)\right) }\right) ^{-1}\,-\, \ieps \, J
\,=\,
\Omega(1)-\ieps  J <0 .
\nonumber
\end{align} 
The same result holds also for the case $ J=1 $, since $  \bar{H}_\eps''(0) $ depends continuously on $ J $ (cf.\ Step 5.3 in the proof of \cite[Theorem 3.2]{HerrTug}).
	
By the symmetry of $ \bar{H}_\eps$, it only remains to show that 	$ \bar{H}_\eps''(m^\star_\eps ) > 0 $.
First note that, since $ m^\star_\eps = 1 +\Omega(\eps) $, for all $ J>0 $, the function $ z\mapsto \psi_J(z) - J m^\star_\eps z$ admits a unique global minimum at some point 
$ z_\eps=1 +\Omega(\eps) $.
Indeed, in the case $ J>1 $, this follows by simply observing that $ \psi_J' $ is invertible, and 
in the case $ J\leq 1 $, we have to apply Cardano's formula (see \cite[Chapters 1 and 2]{Algebra}).
(We omit the details in the latter case, since we do not use the claim of this lemma for the case $ J\leq 1 $
 in the remaining part of this paper.)
Then, as above, using \eqref{EqLegendreDer} and that $  \varphi_\eps'(m^\star_\eps) =\ieps Jm^\star_\eps  $ implies that for $ \eps  $ small enough, 
	\begin{align}\label{EqCurvatureSaddlem}
\begin{split}
	\bar{H}_\eps''(m^\star_\eps)\,&=\, \left( \int_\R \left( z -\int_\R \bar{z }\, d\mu^{\eps,\varphi_\eps'(m^\star_\eps)}(\bar{z}) \right)^2 \, d\mu^{\eps,\varphi_\eps'(m^\star_\eps)}(z) \right) ^{-1}\,-\, \ieps \, J \\
	\,&=\,  \ieps \psi_J''\left(z_\eps \right) \left( 1+ O\left( \eps\sqrt{ \log (\eps^{-1})^3}\right) \right)  \,-\, \ieps \, J \\
		\,&=\,  \ieps  \left( 3z_\eps^2 -1 \right) \left( 1+ O\left( \eps\sqrt{ \log (\eps^{-1})^3}\right) \right) ~>~0  ,
\end{split}
	\end{align}
where we have used Corollary \ref{CorAsymp3} with the function   $  (m,\eps,z)\mapsto V(m,\eps,z) = \psi_J(z) - Jm^\star_\eps z  $ and $ z_{m,\eps} = z_\eps $. This concludes the proof. 
\end{proof}

\begin{remark}
\emph{	In the remaining part of this paper, we   suppose that $ \eps  $ is small enough such that $ [-m^\star_\eps, m^\star_\eps] \subset [-2,2]$.}
\end{remark}
 
  \subsubsection{Potential-theoretic approach to metastability}\label{SubsecPotTheory}

  In this subsection, we quickly  review the key ingredients form the potential-theoretic approach to metastability that we need in our setting.   
  We follow 
  \cite[Chapter 2]{BEGK04}, where   all the omitted details can be found.

  The generator of the stochastic process $ (x(t))_{t\in (0, \infty)} $ introduced in Section \ref{SecMicrSystem} is given by
  \begin{align}
  \cL~=~ \eps \, \ee^{\HH} \left( \nabla \, \ee^{-\HH} \, \nabla \right) ,
  \end{align}
  where $ \HH $ is the microscopic Hamiltonian (recall \eqref{EqMicrHam}). 
  We need the following definitions. 
   \begin{definition}\label{DefPotTheory}
   	Let $ A,D \subset \R^N $ be   open and regular and such that $  A\cap D = \emptyset $ and $ (A\cup D )^c $ is connected.
   	For any   $ B \subset \R^N $, we write $ \cT_B \,=\, \inf\{t>0 \, | \, x(t) \in B\, \}  $.
 	\begin{enumerate}[(i)]
 		\item The \emph{equilibrium potential between $ A$ and $ D  $}, $ f^*_{A,D} $, is defined as the unique solution to the   Dirichlet problem
 		\begin{align}
 		\label{EqDirichletProb}
 		\begin{split}
 		(-\cL f)(x) &~=~ 0 , \qquad \text{ for } x \in (A\cup D )^c,\\
 		f(x) &~=~ 1 , \qquad \text{ for } x \in A,\\
 		f(x) &~=~ 0 , \qquad \text{ for } x \in D.
 		\end{split}
 		\end{align}
 		For $ x \in (A\cup D )^c $, we have the probabilistic interpretation that $ f^*_{A,D} \,=\, \P_x[\cT_A<\cT_D]$.
 		\item 
 		The \emph{equilibrium measure}, $ e_{A,D} $,  is defined as the unique measure on $ \partial  A $ such that 
 		\begin{align}
 		f^*_{A,D}(x)~=~ \int_{  \partial A} G_{D^c}(x,y)  \, e_{A,D}(dy) \quad \text{ for } x \in (A\cup D )^c,
 		\end{align} 
 		where $ G_{D^c}$ is the \emph{Green function} corresponding to $ \cL  $ on $ D^c $ (cf. \cite[(2.2) and (2.3)]{BEGK04}).
 		\item 
 		The \emph{capacity}, $ \capa(A,D) $, of the \emph{capacitor} $ (A,D) $ is defined by
 			\begin{align}
 		\capa(A,D) ~=~ \int_{ \partial A} \ee^{-\HH(y)} \, e_{A,D}(dy) .
 		\end{align} 
 		\item 
 		The \emph{last-exit biased distribution on} $ A $, $ \nu_{A,D} $, is the probability measure on $  \partial A $ defined by
 		 			\begin{align}\label{EqLastExitMeasure}
 	 \nu_{A,D}(dy)~=~ \frac{ \ee^{-\HH(y)} \, e_{A,D}(dy) }{	\capa(A,D)}.
 		\end{align} 
 	\end{enumerate}
 \end{definition}

  Before we proceed and collect the other ingredients from potential theory that we need in this paper, we would like to provide a brief intuitive description of the \emph{equilibrium measure}, $ e_{A,D} $, and the \emph{last-exit biased distribution on} $ A $, $ \nu_{A,D} $. 
  
  First, heuristically, the \emph{equilibrium measure}  $ e_{A,D}$ describes the escape probability from $ A $  towards $ D $. That is, it describes the probability to enter $ D $ without returning to $ A $. 
  The reason why this is true can be understood best in the discrete case, i.e.\ in the  case when $ A $ and $ D $ are discrete sets. Indeed, in this case, $ e_{A,D} (y) = \P_y[\cT_D < \cT_A]$ for $ y\in A $; see \cite[(7.1.19), (7.1.20) and below (7.1.23)]{BdH15} for more details.
  
  Second, heuristically, the \emph{last-exit biased distribution on $ A $}, $ \nu_{A,D} $, can be seen as the Gibbs measure of the system restricted to $ \partial A $ and weighted by the escape probability from $ A $  towards $ D $.
  Intuitively, it describes the Gibbs measure of the system  ``conditioned'' on the event that the next  \emph{exit from $ A $} is the final (or last) exit from $ A $ and leads to the transition towards $ D $. 
  In order to explain this more precisely, note that, by \eqref{EqLastExitMeasure},  
  $ \nu_{A,D} $ is given by the  measure $ \ee^{-\HH(y)} dy $  restricted to $ \partial A $ and weighted by $ e_{A,D}$. 
  Moreover, note that $ \ee^{-\HH(y)} dy $ can be seen as the Gibbs measure of the system\footnote{$ \ee^{-\HH(y)} dy $ can be seen as the Gibbs measure of the system, since it is the unique stationary measure of the stochastic process $ (x(t))_{t\in (0, \infty)} $ and has the form of a Gibbs measure with respect to  the microscopic Hamiltonian $ \HH $.} and that, as we have explained before, $ e_{A,D}$ describes the escape probability from $ A $  towards $ D $.
In this picture, the capacity, $ \capa(A,D) $, serves as the normalization constant of the restricted measure $ \nu_{A,D} $.
  
  Using the notions from Definition \ref{DefPotTheory} one can rewrite the average hitting time of $ B $ in the case that the initial condition is randomly chosen according to the last-exit distribution. This is the content of the following lemma. 
  \begin{lemma}\label{LemMeanTransition}
  	Consider the same setting as in Definition \ref{DefPotTheory}. 
  	Then,  
  	\begin{align}\label{EqMeanTransition}
\E_{\nu_{A,D}}[\cT_D] ~:=~
\int_{\partial A}\E_y[\cT_D]\, \nu_{A,D}(dy) ~=~
\frac{\int_{D^c}f^*_{A,D}(y)\, e^{-\HH(y)}\, dy}{\capa(A,D)}.
  	\end{align}
  \end{lemma}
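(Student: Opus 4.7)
The plan is to reduce the identity to a single Fubini swap that converts an integral of the expected hitting time into an integral of the equilibrium potential, with reversibility doing the real work.

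First, I would start from the well known fact that the function $w(x) := \E_x[\cT_D]$ is the unique (regular) solution to the Poisson-type Dirichlet problem
\begin{align*}
(-\cL w)(x) ~=~ 1 \qquad \text{for } x\in D^c, \qquad
w(x) ~=~ 0 \qquad \text{for } x\in D.
\end{align*}
Using the Green function $G_{D^c}$ associated with $\cL$ on $D^c$ (the same object that appears in Definition \ref{DefPotTheory}(ii)), this gives the representation
\begin{align*}
w(x) ~=~ \int_{D^c} G_{D^c}(x,z)\, dz, \qquad x \in D^c.
\end{align*}

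Next, I would plug this into the left-hand side of \eqref{EqMeanTransition} and expand $\nu_{A,D}$ via its definition in \eqref{EqLastExitMeasure}:
\begin{align*}
\E_{\nu_{A,D}}[\cT_D] ~=~ \frac{1}{\capa(A,D)} \int_{\partial A} \left( \int_{D^c} G_{D^c}(y,z)\, dz \right) \ee^{-\HH(y)}\, e_{A,D}(dy).
\end{align*}
The key ingredient is now reversibility of the diffusion with respect to $\mu$: because $\cL$ is symmetric in $L^2(\mu)$, the Green function satisfies $\ee^{-\HH(y)} G_{D^c}(y,z) = \ee^{-\HH(z)} G_{D^c}(z,y)$ for $y,z \in D^c$. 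Using this identity in the integrand and applying Fubini to swap the order of integration yields
\begin{align*}
\E_{\nu_{A,D}}[\cT_D] ~=~ \frac{1}{\capa(A,D)} \int_{D^c} \ee^{-\HH(z)} \left( \int_{\partial A} G_{D^c}(z,y)\, e_{A,D}(dy) \right) dz.
\end{align*}
By Definition \ref{DefPotTheory}(ii), the inner integral over $\partial A$ is exactly $f^*_{A,D}(z)$, which delivers the claimed formula.

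The main technical points are the validity of the Green function representation for $w$ and the reversibility identity for $G_{D^c}$; neither is a real obstacle in the smooth setting of \eqref{EqSpinSystem}, since $\HH$ is smooth, $\cL$ is uniformly elliptic on bounded sets, and $A,D$ are assumed regular, so the standard potential-theoretic apparatus of \cite[Chapter 2]{BEGK04} applies verbatim. The only thing to double-check is that Fubini is justified, which follows from nonnegativity of $G_{D^c}$ together with the finiteness of $\E_x[\cT_D]$ for $x \in D^c$ (a consequence of the super-quadratic growth of $\HH$ established in Lemma \ref{LemEpsEnergyLandscape}(i), which prevents the process from escaping to infinity before hitting $D$).
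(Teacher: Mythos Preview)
Your argument is correct and is precisely the standard potential-theoretic derivation: represent $\E_x[\cT_D]$ via the Green function, use the $\mu$-symmetry of $G_{D^c}$, swap integrals, and recognise the equilibrium-potential representation from Definition~\ref{DefPotTheory}(ii). The paper does not give its own proof but simply defers to \cite[7.30]{BdH15} and \cite[(2.27)]{BEGK04}, and what you wrote is exactly the argument found there, so there is nothing to compare.

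One small point worth tightening: Definition~\ref{DefPotTheory}(ii) states the identity $f^*_{A,D}(x)=\int_{\partial A} G_{D^c}(x,y)\,e_{A,D}(dy)$ only for $x\in (A\cup D)^c$, whereas your final integral runs over all of $D^c$, including $A$. You should remark that this representation extends to $x\in A$ (where both sides equal $1$), which is part of the standard construction of the equilibrium measure in \cite[Chapter 2]{BEGK04}; otherwise the identification in the last step is formally incomplete on the set $A$.
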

\begin{proof}
	The proof can be found in \cite[Corollary 7.30]{BdH15}. See also \cite[(2.27)]{BEGK04}.
\end{proof}

As we already mentioned, the main advantage to use Lemma \ref{LemMeanTransition} is the availability of variational principles for the capacity. 
In this paper, we use  the so-called \emph{Dirichlet principle}, which is stated  in the following lemma. 
\begin{lemma}[Dirichlet principle]\label{LemDirchletPrinc}
	Consider the same setting as in Definition \ref{DefPotTheory}.
	Let 
	\begin{align}
	\cH_{A,D} ~=~ \left\{
	f \in H^1(\R^N\,;\,\ee^{-\HH(x)} \, dx ) ~\Big|~ \smallrestr{f}{A} =1, \ \smallrestr{f}{D} =0, \forall x \in \R^N: f(x )\in [0,1] ,\
	\right\},
	\end{align}
	and define the \emph{Dirichlet form on $ (A\cup D)^c $}, $ \cE_{(A\cup D)^c} :\cH_{A,D} \ra [0,\infty] $, by 
	\begin{align}
	\cE_{(A\cup D)^c}(f) ~=~    \eps
	\int_{(A\cup D)^c} |\nabla f(x)|^2 \, \ee^{-\HH(x)} \, dx    \quad \text{ for }f \in \cH_{A,D}.
	\end{align}
	Then, 
	 \begin{align}
	\capa(A, D) ~=~  \inf_{  f \in \cH_{A,D}} \,  \cE_{(A\cup D)^c}(f)   ~=~  \cE_{(A\cup D)^c}(f^*_{A,D}) .
	\end{align}
	\end{lemma}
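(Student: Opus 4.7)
The plan is to follow the classical route: verify that the equilibrium potential $f^*_{A,D}$ lies in $\cH_{A,D}$, identify its Dirichlet form with the capacity via integration by parts, and then show minimality through an orthogonality argument.

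First I would check admissibility of $f^*_{A,D}$. The boundary conditions in \eqref{EqDirichletProb} immediately give $\smallrestr{f^*_{A,D}}{A}=1$ and $\smallrestr{f^*_{A,D}}{D}=0$; the probabilistic representation $f^*_{A,D}(x)=\P_x[\cT_A<\cT_D]$ shows $f^*_{A,D}(x)\in[0,1]$ on $(A\cup D)^c$ (equivalently, one can invoke the maximum principle for $-\cL$). Elliptic regularity on the regular open set $(A\cup D)^c$ provides the required $H^1$-integrability with respect to $\ee^{-\HH(x)}dx$, so $f^*_{A,D}\in\cH_{A,D}$.

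Next, I would compute $\cE_{(A\cup D)^c}(f^*_{A,D})$. Writing $\cL=\eps\,\ee^{\HH}\nabla(\ee^{-\HH}\nabla\cdot)$, an integration by parts on $(A\cup D)^c$ yields
\begin{align*}
\cE_{(A\cup D)^c}(f^*_{A,D})
&~=~ \eps\int_{(A\cup D)^c}\nabla f^*_{A,D}\cdot\nabla f^*_{A,D}\,\ee^{-\HH}\,dx \\
&~=~ -\int_{(A\cup D)^c}f^*_{A,D}\,(\cL f^*_{A,D})\,\ee^{-\HH}\,dx \;+\;\int_{\partial A\cup\partial D}f^*_{A,D}\,\partial_n f^*_{A,D}\,\ee^{-\HH}\,d\sigma,
\end{align*}
where $\partial_n$ is the outward normal derivative of $(A\cup D)^c$. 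The bulk term vanishes since $\cL f^*_{A,D}=0$ on $(A\cup D)^c$, and the boundary contribution on $\partial D$ vanishes because $f^*_{A,D}=0$ there. On $\partial A$, the standard identification of the equilibrium measure $e_{A,D}$ with $-\eps\,\partial_n f^*_{A,D}\,d\sigma$ (this is exactly the relation one obtains by applying the Green function representation in Definition \ref{DefPotTheory}(ii) and taking normal derivatives at the boundary) combined with $f^*_{A,D}=1$ on $\partial A$ gives $\cE_{(A\cup D)^c}(f^*_{A,D})=\int_{\partial A}\ee^{-\HH}\,e_{A,D}(dy)=\capa(A,D)$.

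For minimality, given any $f\in\cH_{A,D}$, set $g=f-f^*_{A,D}$. Since $f$ and $f^*_{A,D}$ coincide on $A$ and on $D$, $g$ vanishes on $A\cup D$ (in the trace sense). Expanding the quadratic form,
\begin{align*}
\cE_{(A\cup D)^c}(f)~=~\cE_{(A\cup D)^c}(f^*_{A,D})~+~2\eps\!\int_{(A\cup D)^c}\!\nabla f^*_{A,D}\cdot\nabla g\,\ee^{-\HH}\,dx~+~\cE_{(A\cup D)^c}(g).
\end{align*}
Integration by parts with the vanishing of $g$ on $\partial A\cup\partial D$ and the harmonicity $\cL f^*_{A,D}=0$ kills the cross term, and the remaining piece $\cE_{(A\cup D)^c}(g)\geq 0$. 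Hence $\cE_{(A\cup D)^c}(f)\geq\capa(A,D)$ with equality when $f=f^*_{A,D}$, which yields both assertions of the lemma.

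The main technical nuisance I expect is justifying the boundary integrations by parts and the identification $e_{A,D}=-\eps\,\partial_n f^*_{A,D}\,d\sigma\smallrestr{}{\partial A}$ cleanly enough to cite rather than re-derive; this hinges on the assumed regularity of $A$ and $D$ and on enough smoothness of $f^*_{A,D}$ up to the boundary to justify the surface integrals. Since all these ingredients are developed in detail in \cite[Chapter 7]{BdH15} and \cite[Chapter 2]{BEGK04}, I would simply quote them and let the two-line computation above carry the argument.
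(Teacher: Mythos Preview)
Your proposal is correct and follows exactly the classical argument; the paper itself does not give a proof but simply cites \cite[7.33]{BdH15} and \cite[(2.15)]{BEGK04}, which contain precisely the integration-by-parts and orthogonality computation you have written out. In other words, you have reproduced the referenced proof, including the identification $e_{A,D}(dy)=-\eps\,\partial_n f^*_{A,D}\,d\sigma$ on $\partial A$, so there is nothing to add.
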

  \begin{proof}
  	The proof can be found in \cite[Theorem 7.33]{BdH15}. The statement can also be found in \cite[(2.15)]{BEGK04}.
  	\end{proof}

 \subsection{The proof of Theorem \ref{ThmIntroEpsEKF}}\label{SecEpsEyring}
 In this section we first introduce all missing ingredients of Theorem \ref{ThmIntroEpsEKF}. 
 Afterwards we provide the proof of this theorem. 
 However, as we explained after the formulation of Theorem \ref{ThmIntroEpsEKF} in  Section \ref{SecMainResult}, the main three steps of this proof are moved to Section \ref{SecEpsUpperB}, Section \ref{SecEpsLowerB} and Section \ref{SecEpsNum}, respectively. 
 
Recall that, under \eqref{EqJAss} and for $ \eps $ small enough, the macroscopic Hamiltonian admits exactly two global minima $ \pm m^\star_\eps $. 
We therefore consider the hyperplanes $ P^{-1}(-m^\star_\eps) $ and $ P^{-1}(m^\star_\eps) $ as the \emph{metastable sets} in our system. 

The goal in this paper is to use the potential-theoretic setting from Subsection \ref{SubsecPotTheory} to compute  the average transition time from  $ P^{-1}(-m^\star_\eps) $ to $ P^{-1}(m^\star_\eps) $ for the stochastic process $ (x(t))_{t\in (0, \infty)} $ introduced in Section \ref{SecMicrSystem}. 
However, due to technical reasons, we have to   modify this goal in two ways. 

First, instead of considering $ P^{-1}(-m^\star_\eps) $ and $ P^{-1}(m^\star_\eps) $ as the metastable sets, we rather consider 
$ P^{-1}(-m^\star_\eps+\eta) $ and $ P^{-1}(m^\star_\eps-\eta) $, where 
\begin{align}\label{EqEpsEta}
\eta =  \frac{\sqrt{2}}{\sqrt{N \bar{H}_\eps''(-m^\star_\eps)}}\sqrt{ \log (N\eps^{-1})}.
\end{align}
Note that, by using the arguments from Step 3 of the proof of Proposition \ref{PropEpsUpperBound}, we  have that 
\begin{align}\label{EqEpsEta2}
	 \eta \, =\, \Omega\left(\sqrt{\frac{\log(N)}{N}} \, \sqrt{\eps \log(\eps^{-1})}\right)  .
\end{align}
Heuristically, the reason for this shift is the following. 
In the proof of Theorem \ref{ThmIntroEpsEKF} we have to compute the integral in the  numerator on the right-hand side of \eqref{EqMeanTransition}. 
Using the disintegration \eqref{EqDisintegration}, Proposition \ref{PropEpsLocalCramer} and the fact that $ \bar{H}_\eps $ has its global minima at $ -m^\star_\eps $ and $ m^\star_\eps $, we see that this integral is concentrated on the sets 
$ \{\,	x \, | \, Px \in  [\pm m^\star_\eps- \eta, \pm m^\star_\eps+ \eta 	]\,	\} $.
Hence,  in order to apply Laplace's method,  we need that the equilibrium potential is equal to $ 1 $ or equal to $ 0 $ on these sets, respectively.

Second, instead of running the system from some specific point in $ P^{-1}(-m^\star_\eps+\eta) $, we rather have to initialise our system randomly according to the last-exit biased distribution $ \nu_{B^-,B^+} $, where  $ B^-, B^+\subset \R^N $ are defined by 
\begin{align}\label{EqEpsMetasets}
\begin{split}
B^- ~=~ \{\	x\in \R^N \ | \ Px ~\leq~  - m^\star_\eps+ \eta 	\	\}
\qaq 
B^+~=~\{	\ x\in \R^N \ | \ Px ~\geq~ m^\star_\eps -\eta 	\	\}.
\end{split}
\end{align} 
Note that $ \nu_{B^-,B^+} $ is a probability measure supported on $ \partial B^-= P^{-1}(-m^\star_\eps+\eta) $. 
The main reason for the choice of this initial distribution is that we can exploit the formula \eqref{EqMeanTransition}. 
However, in a finite-dimensional setting, such as in \cite{BEGK04}, we could also obtain an asymptotic expression for  $\E_y[\cT_{B^+}] $ for $ y  \in \partial B^- $. 
This is done by using Harnack inequalities. 
But since these inequalities depend on the dimension of the base space, 
we are not able to transfer the strategy used in \cite{BEGK04} to our high-dimensional setting. 
 
We are now in the position to prove Theorem \ref{ThmIntroEpsEKF}.

   \begin{proofoft}{\emph{\ref{ThmIntroEpsEKF}}.}  
Note that  $ \cT \equiv \cT_{B^+} $, where the stopping time $ \cT $ is defined in Theorem \ref{ThmIntroEpsEKF} and $ \cT_{B^+} $ is defined in Definition \ref{DefPotTheory}.
Then, by using \eqref{EqMeanTransition}, we have that 
\begin{align}
\E_{\nu_{B^-,B^+}}[\cT_{B^+}]
~=~ \frac{ 	\int_{(B^+)^c}f^*_{B^-,B^+}(y)\, \ee^{-\mathrm{H}(y)}\, dy }{\capa(B^-, B^+)} .
\end{align}
Then, the proof follows from a simple application of the Propositions \ref{PropEpsUpperBound}, \ref{PropEpsLowerBound} and \ref{PropEpsNum}. 
   \end{proofoft}

\subsection{Upper bound on the capacity}\label{SecEpsUpperB}

 \begin{proposition}\label{PropEpsUpperBound}
 	Consider the same setting as in Theorem \ref{ThmIntroEpsEKF}.
Then, for $ N\in \N  $ large enough and $ \eps\in (0,1] $ small enough,
 	 \begin{align}\label{EqUpperBound}
 	\capa(B^-, B^+) ~\leq~ \eps\,  \frac{1}{ 2\pi } \, 
 	                                \ee^{-  N \bar{H}_\eps (0)}   \,  \sqrt{  |\bar{H}_\eps''(0)|  } \, \sqrt{  \varphi_\eps''(0)    }  \,
 	   \left( 1 + O \left( \sqrt{\frac{\log(N)^3}{N}}   \right) \right).
 	\end{align}
 \end{proposition}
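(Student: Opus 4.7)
The strategy is to apply the Dirichlet principle (Lemma \ref{LemDirchletPrinc}) with a test function that depends only on the macroscopic variable $Px$, thereby reducing an $N$-dimensional variational problem to a one-dimensional one. Concretely, the plan is to set $f(x) = g(Px)$ for a smooth $g:\R \to [0,1]$ with $g(m) = 1$ for $m \leq -m^\star_\eps + \eta$ and $g(m) = 0$ for $m \geq m^\star_\eps - \eta$. Since $|\nabla P|^2 = 1/N$, this gives $|\nabla f(x)|^2 = g'(Px)^2/N$.

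Disintegrating $\ee^{-\HH(x)}\,dx$ with respect to $P$ via the coarea formula (which produces a factor $\sqrt{N}$ from $|\nabla P|^{-1}$, and which, on the fibre $P^{-1}(m)$, replaces $-\HH$ by $\ieps N J m^2/2 - \ieps\sum_i \psi_J(x_i)$ since $\sum_{i,j}x_ix_j = N^2 m^2$ there), the Dirichlet integral collapses to
\[
\cE_{(B^-\cup B^+)^c}(f) ~=~ \frac{\eps}{\sqrt{N}} \int_{-m^\star_\eps + \eta}^{m^\star_\eps - \eta} g'(m)^2 \, \ee^{\ieps N J m^2/2 - N \varphi_{N,\eps}(m)} \, dm.
\]
Applying the local Cram\'er theorem (Proposition \ref{PropEpsLocalCramer}) to replace $\ee^{-N \varphi_{N,\eps}(m)}$ by $\ee^{-N\varphi_\eps(m)} \sqrt{\varphi_\eps''(m)/(2\pi)}\,(1+O_K(1/\sqrt{N}))$ and using $\bar{H}_\eps(m) = \varphi_\eps(m) - \ieps J m^2/2$, the integrand becomes $g'(m)^2 \,\ee^{-N\bar{H}_\eps(m)} \sqrt{\varphi_\eps''(m)/(2\pi)}$ up to a multiplicative factor $1+O_K(1/\sqrt N)$.

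The resulting one-dimensional variational problem is solved explicitly: the minimiser $g^*$ has derivative proportional to $w(m)^{-1}$, where $w(m) := \ee^{-N\bar{H}_\eps(m)}\sqrt{\varphi_\eps''(m)/(2\pi)}$, and the minimum value equals $\big(\int_{-m^\star_\eps+\eta}^{m^\star_\eps-\eta} w(m)^{-1}\,dm\big)^{-1}$. Since $\bar{H}_\eps$ has its unique interior local maximum at $m=0$ with $\bar{H}_\eps''(0) < 0$ (Lemma \ref{LemEpsEnergyLandscape}), Laplace's method around $m=0$ gives
\[
\int w(m)^{-1}\, dm ~\sim~ \ee^{N\bar{H}_\eps(0)} \sqrt{2\pi/\varphi_\eps''(0)} \cdot \sqrt{2\pi/(N |\bar{H}_\eps''(0)|)}.
\]
Multiplying by the prefactor $\eps/\sqrt{N}$, the two $\sqrt{N}$'s cancel cleanly and the capacity is bounded above by exactly $\tfrac{\eps}{2\pi}\,\ee^{-N\bar{H}_\eps(0)}\sqrt{|\bar{H}_\eps''(0)|\,\varphi_\eps''(0)}$ to leading order, matching \eqref{EqUpperBound}.

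The hard part will be the sharp error analysis producing the advertised $O(\sqrt{\log(N)^3/N})$ remainder. Three sources of error must be simultaneously controlled: the multiplicative local Cram\'er error $O_K(1/\sqrt{N})$ in the integrand, the standard next-order correction in the Laplace expansion at the saddle $m=0$, and the cost of truncating the integral at $\pm(m^\star_\eps-\eta)$. The scaling $\eta = \Omega(\sqrt{\log(N\eps^{-1})/(N\bar{H}_\eps''(-m^\star_\eps))})$ from \eqref{EqEpsEta} is tuned precisely so that the truncation tails are a negative power of $N$ (since the Laplace exponent loses $\sim N \eta^2 |\bar H_\eps''(0)|/2$ at the boundary), while $\eta$ remains small enough not to disturb the Gaussian expansion at $0$. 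Uniformity in $\eps$ is then obtained by tracking how each step depends on $\bar{H}_\eps''(0)$ and $\varphi_\eps''(0)$, both of which scale like $\eps^{-1}$ by Corollary \ref{CorAsymp2}.
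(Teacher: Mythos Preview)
Your approach is essentially the same as the paper's: choose a test function of the form $f(x)=g(Px)$, disintegrate via the coarea formula, apply the local Cram\'er theorem, and reduce to a one-dimensional Laplace integral at the saddle $m=0$.

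The one real difference is the choice of transition window. The paper does not let $g$ interpolate on the full interval $[-m^\star_\eps+\eta,\,m^\star_\eps-\eta]$; instead it fixes a much smaller window $[-\rho,\rho]$ around the saddle with $\rho=\sqrt{\log N}/\sqrt{N|\bar H_\eps''(0)|}$, sets $g\equiv1$ to the left and $g\equiv0$ to the right of that window, and takes $g=h^*$ to be the one-dimensional harmonic function on $[-\rho,\rho]$. This makes the error analysis very direct: on $[-\rho,\rho]$ the Taylor expansions of $\sqrt{\varphi_\eps''}$ and $\bar H_\eps$ around $0$ have remainders controlled by $\rho$ times a third derivative, and the a~priori bounds $\varphi_\eps'',|\bar H_\eps''(0)|,|\varphi_\eps'''|\in[c^{-1}/\eps,c/\eps]$ (Step~3 in the paper) then give exactly the $O(\sqrt{\log(N)^3/N})$ error. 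Your version, solving the one-dimensional problem on the full interval and then applying Laplace's method, is equivalent in outcome but hides the introduction of such a $\rho$ inside the Laplace step.

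One small correction to your error discussion: the scale $\eta$ from \eqref{EqEpsEta} plays no role in this proposition. The endpoints $\pm(m^\star_\eps-\eta)$ are far from the saddle, and since $\int w^{-1}\,dm$ only needs a \emph{lower} bound here, truncating to any subinterval containing $0$ is free; the relevant scale is the saddle window $\rho$ above, not $\eta$. The tuning of $\eta$ matters for Proposition~\ref{PropEpsNum} (mass of the equilibrium potential near the wells), not for the capacity upper bound.
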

\begin{proof}
	We will obtain the upper bound by using the Dirichlet principle (Lemma \ref{LemDirchletPrinc}).
	That is, we introduce a suitable test function and show that the corresponding Dirichlet form is asymptotically given by the right-hand side of \eqref{EqUpperBound}.
	
	\textbf{Step 1.} [Choice of the test function $ f $.]\\
Let $ \rho>0 $ and the function $ h:[-\rho, \rho] \ra [0,1] $ be defined by
	\begin{align}\label{EqEpsHarmMacr} 
		\rho =   \sqrt{ \log (N) \, N^{-1} \,  |\bar{H}_\eps''(0)|^{-1}}  \qaq
	h^*(m) ~=~ \frac{\int_{m}^{\rho}   \varphi''_\eps(z)^{-\frac{1}{2}} \  \ee^{ N\bar{H}_\eps(z)} \, dz}{\int_{-\rho}^{\rho}  \varphi''_\eps(z)^{-\frac{1}{2}} \  \ee^{ N\bar{H}_\eps(z)} \, dz},
	\end{align}
	which is well-defined, since $ \varphi_\eps $ is strictly convex.
Then, $ h^* $ is the equilibrium potential between $ \{-\rho\} $ and $ \{\rho\} $ corresponding to the invariant measure $ \mathbbm{1}_{(-\rho, \rho)}(z) \, \varphi''_\eps(z)^{\frac{1}{2}} \  \ee^{- N\bar{H}_\eps(z)} \, dz $. That is, $ h^* $ is the unique solution to the Dirichlet problem (cf.\ Definition \ref{DefPotTheory}, \cite[Section 7.2.5]{BdH15} or \cite[(11.2.18) and (11.2.19)]{BdH15})
\begin{align}
\begin{split}
\left(-\frac{1}{N} \frac{d}{dm} -  \frac{d}{dm }  \frac{1}{2N}\log\left(  \varphi''_\eps(m) \right) +\frac{d}{dm } \bar{H}_\eps(m)  \right)   \frac{d}{dm} h^*(m) &~=~ 0 \qquad \text{for }m\in (-\rho, \rho),\\
h^*(-\rho) &~=~ 1 ,\\
h^*(\rho) &~=~ 0 .
\end{split}
\end{align} 
The test function that we use in this proof is given by
\begin{align}
f(x)= 
\begin{cases}
1 &\text{if } Px~\leq~ -\rho, \\
0 &\text{if } Px~\geq~ \rho ,\\
h^*(Px) &\text{if } Px~\in~ (-\rho,\rho) .
\end{cases}
\end{align}

\textbf{Step 2.} [Estimation of the Dirichlet form of  $ f $.]\\
Using the Dirichlet principle (Lemma \ref{LemDirchletPrinc}) we have that 
 \begin{align}
\begin{split}
 \frac{1}{ Z_{ \mu }}\capa(B^-, B^+) 
 &\leq \eps \, 
  \int_{\{x\in\R^N\,|\,Px\in (-\rho,\rho)\}} \sum_{i=0}^{N-1} \left|\partial_i h^*\left( \frac{1}{N} \sum_{i=0}^{N-1} x_i\right) \right|^2 \ d\mu\\
  &=  \eps \, 
  \frac{1}{N}
   \int_{\{x\in\R^N\,|\,Px\in (-\rho,\rho)\}}  |  (h^*)'(Px)|^2 \ d\mu\\
  &=  \eps \, 
 \frac{1}{N}
 \int_{-\rho}^{\rho}\int_{P^{-1}(m)}   |  (h^*)'(m)|^2 \ d\mu_m d\bar{\mu}(m),
 \end{split}
  \end{align}
  where, in the first inequality, we use Lemma \ref{LemDirchletPrinc} and the definition of the test function $ f $, in the first equality, we use the definition of $Px$, and in the second equality, we use the disintegration \eqref{EqDisintegration}.
  
  Applying   Proposition \ref{PropEpsLocalCramer} for $ K  = [-2,2]  $ and the definition of $ h^* $ yields that  
   \begin{align}
   \label{EqEpsUpper00}
\nonumber \frac{1}{ Z_{ \mu }}   \capa(B^-, B^+) 
   &\leq      \,   \frac{\eps}{\sqrt{2\pi}}  
 \frac{1}{N}\frac{1}{Z_{\bar{\mu}}}
 \int_{-\rho}^{\rho}    |  (h^*)'(m)|^2 \      \sqrt{\varphi''_\eps(m)} \,  \ee^{ -N\bar{H}_\eps(m)}  \, dm  \, \left( 1+O\left(\frac{1}{\sqrt{N}}\right) \right)   \\
   &=  \, 
   \frac{\eps}{\sqrt{2\pi}}  
   \frac{1}{N}\frac{1}{Z_{\bar{\mu}}}
   \left( \int_{-\rho}^{\rho}  \frac{1}{  \sqrt{\varphi''_\eps(m)}} \,  \ee^{ N\bar{H}_\eps(m)}  \, dm \right)^{-1} \, \left( 1+O\left(\frac{1}{\sqrt{N}}\right)\right) .
    \end{align}
    In Step 4 and 5 of this proof we show that for $ m~\in~ [-\rho,\rho] $, 
      \begin{align}
\label{EqEpsUpper0}
  \sqrt{\varphi''_\eps(m)} 
  &~=~  \sqrt{\varphi''_\eps(0)} \, \left( 1+O_K\left(\sqrt{\frac{\eps\,\log(N)}{N}}\right)\right), \text{ and }\\
  \bar{H}_\eps(m) &~=~ \bar{H}_\eps(0)    + \frac{1}{2}m^2   \bar{H}_\eps''(0)  
  +O_K\left( \sqrt{\eps}\sqrt{\frac{ \,\log(N)}{N}}^3\right).
\label{EqEpsUpper}
  \end{align}  
  And since, by the coarea formula, $ Z_{\bar{\mu}} \sqrt{N} = Z_{\mu} $,  \eqref{EqEpsUpper0} and \eqref{EqEpsUpper} imply that 
   \begin{align}
   \label{EqEpsUpper01}
  \begin{split}
  \capa(B^-, B^+) 
  &\leq    \frac{ \eps \, \sqrt{\varphi''_\eps(0)} \, \ee^{-N\bar{H}_\eps(0) }}{\sqrt{2\pi}\sqrt{N}}  \, 
  \left( \int_{-\rho}^{\rho}     \ee^{\frac{1}{2}Nm^2   \bar{H}_\eps''(0)    }   \, dm \right)^{-1} \, \left( 1+O\left(\sqrt{\frac{\log(N)^3}{N}}\right)\right)  .
  \end{split}
  \end{align}
 The integral on the right-hand side of \eqref{EqEpsUpper01} can be estimated from below as follows.
    \begin{align}  
   \nonumber
 \int_{-\rho}^{\rho}     \ee^{\frac{1}{2}Nm^2   \bar{H}_\eps''(0)    }   \, dm 
  &~\geq~ 
  \left(  \int_{\{ x\in \R^2 \, |\, |x|\leq \rho \}}     \ee^{\frac{1}{2}N|x|^2   \bar{H}_\eps''(0)    }   \, dx    \right)^{\frac{1}{2}} 
  ~=~
  \sqrt{\frac{2\pi}{N| \bar{H}_\eps''(0)   |}}
  \left(  1-     \ee^{ \frac{1}{2} N\rho^2    \bar{H}_\eps''(0)   }    \right)^{\frac{1}{2}} \\
  &~=~
  \sqrt{\frac{2\pi}{N| \bar{H}_\eps''(0)   |}}
  \left(  1+O\left(\frac{1}{\sqrt{N}}\right)    \right) ,
 \label{EqEpsUpper02} 
  \end{align}   
  Combining \eqref{EqEpsUpper01} with \eqref{EqEpsUpper02}  concludes the proof of \eqref{EqUpperBound}.
  
  \textbf{Step 3.} [Some a priori estimates.]\\
  Before we show \eqref{EqEpsUpper0} and  \eqref{EqEpsUpper}, we collect some a priori estimates. 
  First, we note that there exists $ c>0 $ such that for all $ m \in K $ and $ \eps  $ small enough, 
    \begin{align}
  \label{EqEpsUpper4}
  |\varphi''_\eps(m)| ~=~\varphi''_\eps(m) ~=~ \frac{1}{(\varphi_\eps^*)''(\varphi'_\eps(m))} ~\in ~ 
  \left[\frac{c^{-1}}{\eps}, \frac{c}{\eps}\right],
  \end{align}
  where we used \eqref{EqLegendreDer} in the second equality, and then applied  Lemma \ref{LemMoments} \emph{(iii)}.
  Moreover, recall that in the proof of Lemma \ref{LemEpsEnergyLandscape}, we have seen that  
$   |\bar{H}_\eps''(0)| ~=~ 1/\eps (1+O(\eps)) $.
Therefore, for $ \eps  $ small enough, $   |\bar{H}_\eps''(0)| ~\geq ~ 1/(4\eps) $.

Next, we note that it is shown in Lemma \ref{LemMoments} that there exists $ \eps_K>0 $ and a   function 
$ \tau :K \times [-\eps_K,\eps_K] \ra \R $  such that 
$ \sup_{0<\eps<\eps_K} \sup_{m \in K  } |\tau(m, \eps)| <\infty  $
and $ 	\varphi_{\eps }'(m) = \ieps \tau(m, \eps) $ for all $ m \in K $ and   $ 0<\varepsilon<\varepsilon_{K} $.
Then,  we see that there exists $ c'>0 $ such that for all $ m\in [-\rho,\rho] \subset K $  and for $ \eps  $ small enough,
  \begin{align} 
  \label{EqEpsUpper5}
\begin{split}
   |\varphi'''_\eps(m)| &~=~ 
   \left| \frac{(\varphi_\eps^*)'''(\varphi'_\eps(  m)) }{(\varphi_\eps^*)''(\varphi'_\eps(  m))^{3}}\right| ~=~ \left| \int_{\R} \left(z-  (\varphi_\eps^*)'(\varphi'_\eps(  m))\right)^{3}\,  \, d\mu^{\eps,\varphi'_\eps(  m)}(z) \right| \,   \varphi''_\eps(m)^{3}\\
 &~=~ \left| \int_{\R} \left(z-  \int_\R \bar{z }\, d\mu^{\eps,\varphi_\eps'(m)}(\bar{z})\right)^{3}\,  \, d\mu^{\eps,\varphi'_\eps(  m)}(z) \right| \,   \varphi''_\eps(m)^{3}
 ~\in ~ 
 \left[\frac{(c')^{-1}}{\eps}, \frac{c'}{\eps}\right],
\end{split}
   \end{align} 
  where we used the second equation in \eqref{EqLegendreDer} in the first step, the first equation in \eqref{EqLegendreDer} and the third equation in \eqref{EqLegendreDer2} in the second step, the first equation in \eqref{EqLegendreDer2} in the third step, and \eqref{EqEpsGCMeasure}, \eqref{EqEpsUpper4} and Corollary \ref{CorAsymp3} for the function  $ (m,\eps,z)\mapsto V(m,\eps,z) = \psi_J(z) +  \tau(m, \eps) z $  in the last step.
  
  \textbf{Step 4.} [Proof of \eqref{EqEpsUpper0}.]\\
    By Taylor's formula, we have for some $ \theta  \in [0,1] $,
    \begin{align}
\label{EqEpsUpper1}
     \sqrt{\varphi''_\eps(m)} 
     ~= ~ 
      \sqrt{\varphi''_\eps(0)}~\left( 1~+~
      m ~
     \frac{ \varphi'''_\eps(\theta m)}{ 2\sqrt{\varphi''_\eps(0)}\sqrt{\varphi''_\eps(\theta m)}}\right) .
    \end{align} 
Then, by  the estimates from Step 3, 
\begin{align}\label{EqEpsUpper3}
\begin{split}
&\left|
m 
\frac{ \varphi'''_\eps(\theta m)}{ 2\sqrt{\varphi''_\eps(0)}\sqrt{\varphi''_\eps(\theta m)}}\right| 
~\leq~ \frac{1}{2}\, \rho  \, c \, c'
 ~= ~  \frac{1}{2}\, \sqrt{\frac{\log(N)}{N}} \,\frac{c\, c'}{\sqrt{|\bar{H}_\eps''(0)|}} \,   
~\leq ~ c\, c' \sqrt{\frac{\eps\, \log(N)}{N}} .
\end{split}
\end{align}
In combination with \eqref{EqEpsUpper1}, this yields \eqref{EqEpsUpper0}.

\textbf{Step 5.} [Proof of \eqref{EqEpsUpper}.]\\
Again by Taylor's formula, for some $ \theta'\in[0,1] $,
        \begin{align}
    \bar{H}_\eps(m) ~=~ \bar{H}_\eps(0)    ~+~ \frac{1}{2}m^2   \bar{H}_\eps''(0)  
    ~+~\frac{1}{6}m^3   \bar{H}_\eps'''(\theta' m) .
    \label{EqEpsUpper2}
    \end{align}
Similarly as in  Step 4, we have that 
        \begin{align}
\begin{split}
        \label{EqEpsUpper21}
\left| m^3   \bar{H}_\eps'''(\theta' m) \right|
~\leq ~\rho^3 \,
\left|    \varphi_\eps'''(\theta' m) \right|
~\leq~  \sqrt{\frac{\log(N)}{N}}^3 \,\frac{1}{\sqrt{|\bar{H}_\eps''(0)|}^3}\, \frac{c'}{\eps} 
 ~\leq ~ 8\,c'\, \sqrt{\eps} \,
\sqrt{\frac{ \log(N)}{N}}^3 ,
\end{split}
 \end{align}
 which concludes the proof of \eqref{EqEpsUpper}.
\end{proof}

\subsection{Lower bound on the capacity}  \label{SecEpsLowerB}
In this section, we prove the   lower bound on the capacity.
The proof is inspired by the two-scale approach, which was initiated in \cite{GORV}.
To apply this approach, we use that by the \emph{Bakry-\'Emery theorem} (see for instance \cite[p.\ 305]{GORV} and \cite[Remark 1.2]{MS}),   $ \mu_m$ satisfies the \emph{Poincar\'e inequality} (recall \eqref{EqPI} from the introduction) with  constant $ (J-1)/\eps $.
That is, for all $ N \in \N $, $ m \in \R $ and  $ f \in H^1(\mu_m) $,
\begin{align}\label{EqEpsPI}
\var_{\mu_m} \left( f \right) 
&~:=~ \int  \left|	f - \int   	f   d\mu_m  	\right|^2 d\mu_m  
~\leq~ \frac{\eps}{J-1}  \int  \left|	(\mathrm{id} - NP^tP) \nabla f	\right|^2 d\mu_m  ,
\end{align}
where $ P^tm= (1/N)  (m,\dots,m) \in \R^N $ for $ m\in \R $. 
\begin{proposition}\label{PropEpsLowerBound}
	Consider the same setting as in Theorem \ref{ThmIntroEpsEKF}.
	Then, for $ N\in \N  $ large enough and $ \eps\in (0,1] $ small enough,
	\begin{align}\label{EqLowerBound}
	\capa(B^-, B^+) \,\geq\,  \frac{\eps}{ 2\pi } \, 
	\ee^{-  N \bar{H}_\eps (0)}   \,  \sqrt{  |\bar{H}_\eps''(0)|  } \, \sqrt{  \varphi_\eps''(0)    } 
\, \left(1+O\left( \sqrt{\frac{\log(N)^3}{N}}\right) +O\left(\eps \right)\right) .
	\end{align}
\end{proposition}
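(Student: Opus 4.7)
The plan is to prove the lower bound via the \emph{two-scale approach} of \cite{GORV}. By the Dirichlet principle (Lemma \ref{LemDirchletPrinc}), one has $\capa(B^-,B^+) = \cE_{(B^-\cup B^+)^c}(f^*)$ where $f^* = f^*_{B^-,B^+}$ is the equilibrium potential, so it suffices to bound this Dirichlet form from below by the right-hand side of \eqref{EqLowerBound}.

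First, I would orthogonally decompose the gradient as
\[
|\nabla f^*|^2 \;=\; |NP^tP\nabla f^*|^2 \;+\; |(I - NP^tP)\nabla f^*|^2,
\]
where $NP^tP$ is the orthogonal projection onto $\mathrm{span}(1,\dots,1)$. Defining the macroscopic profile $\bar f(m) := \int_{P^{-1}(m)} f^*\, d\mu_m$, the macroscopic component of the gradient controls $\bar f'(m)$: a computation using the disintegration \eqref{EqDisintegration} together with Cauchy-Schwarz gives
\[
|\bar f'(m)|^2 \;\le\; N \int_{P^{-1}(m)} |NP^tP\nabla f^*|^2\, d\mu_m \;+\; \text{(fluctuation error)},
\]
where the error involves the component $(I-NP^tP)\nabla f^*$ together with the covariance of $f^*$ with the drift on the fibre $P^{-1}(m)$. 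Crucially, this error can be absorbed using the Poincar\'e inequality \eqref{EqEpsPI}: the conditional measures $\mu_m$ satisfy a spectral gap of order $(J-1)/\eps$, large enough for the error to be negligible relative to the macroscopic contribution. This is exactly the step where the assumption \eqref{EqJAss} enters, through Bakry-\'Emery.

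Integrating the above inequality against $\bar\mu$, applying the coarea identity $Z_{\bar\mu}\sqrt N = Z_\mu$ together with the local Cram\'er theorem (Proposition \ref{PropEpsLocalCramer}) to replace $d\bar\mu(m)$ by $(2\pi)^{-1/2}\sqrt{\varphi''_\eps(m)}\, e^{-N\bar H_\eps(m)}\, dm$, and truncating to the interval $[-\rho,\rho]$ with $\rho$ as in \eqref{EqEpsHarmMacr}, I obtain the one-dimensional lower bound
\[
\capa(B^-,B^+) \;\ge\; \frac{\eps}{\sqrt{2\pi N}}\, \inf_{\bar f} \int_{-\rho}^{\rho} |\bar f'(m)|^2 \sqrt{\varphi''_\eps(m)}\, e^{-N\bar H_\eps(m)}\, dm\ \bigl(1+o(1)\bigr),
\]
the infimum being over profiles with $\bar f(-\rho)=1$ and $\bar f(\rho)=0$. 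Here \eqref{EqEpsPI} is used a second time, now to ensure that $\bar f$ actually inherits the $\{0,1\}$-boundary conditions of $f^*$ with only exponentially small error, using that $f^*$ is essentially constant on typical fibres above the metastable wells.

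The infimum on the right is attained at $h^*$ from \eqref{EqEpsHarmMacr} and equals the reciprocal of $\int_{-\rho}^{\rho}(\varphi''_\eps)^{-1/2}\, e^{N\bar H_\eps(m)}\, dm$. Inserting the Taylor expansions \eqref{EqEpsUpper0} and \eqref{EqEpsUpper} of $\varphi_\eps$ and $\bar H_\eps$ near the saddle $m=0$ and evaluating the resulting Laplace-type integral then reproduces exactly the right-hand side of \eqref{EqLowerBound}. The main obstacle I anticipate is controlling the fluctuation error in the second step uniformly in $\eps$: the Poincar\'e constant $(J-1)/\eps$ and the microscopic fluctuation scale $\sqrt\eps$ must be delicately balanced so that the error is absorbed into the announced correction factor $1+O(\sqrt{\log(N)^3/N})+O(\eps)$, rather than dominating it.
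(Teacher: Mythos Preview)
Your overall two-scale strategy is correct and matches the paper's approach, but there is a genuine gap in how you handle the interval of integration and the boundary conditions, and a second gap in the fluctuation estimate.

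The truncation to $[-\rho,\rho]$ (with $\rho$ as in \eqref{EqEpsHarmMacr}) is wrong here. At $m=\pm\rho$ you are near the saddle, precisely where $f^*$ is transitioning from $1$ to $0$, so there is no reason why $\bar f(-\rho)\approx 1$ and $\bar f(\rho)\approx 0$; your proposed ``second use of \eqref{EqEpsPI}'' cannot help, since Poincar\'e on the fibre $P^{-1}(m)$ only controls the variance of $f^*$ on that fibre and says nothing about its mean $\bar f(m)$. The phrase ``typical fibres above the metastable wells'' applies to $m$ near $\pm m^\star_\eps$, not to $m=\pm\rho$. The paper instead integrates over the full interval $[-m^\star_\eps+\eta,\,m^\star_\eps-\eta]$, where the boundary conditions $\bar f(-m^\star_\eps+\eta)=1$ and $\bar f(m^\star_\eps-\eta)=0$ are \emph{exact}, simply because $f^*\equiv 1$ on $B^-$ and $f^*\equiv 0$ on $B^+$. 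No Poincar\'e is needed for this. The one-dimensional variational problem is then posed on this larger interval, and Laplace asymptotics at the interior maximum $m=0$ produce the claimed value.

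A second, more subtle point concerns the fluctuation error. The Poincar\'e inequality \eqref{EqEpsPI} for $\mu_m$ alone is not enough to make this error $O(\eps)$. After Cauchy--Schwarz and two applications of \eqref{EqEpsPI} (once for $f^*$, once for $\sum_i\psi_J'(x_i)$), one is left with $N^{-2}\int\bigl|(\mathrm{id}-NP^tP)\nabla\sum_i x_i^3\bigr|^2\,d\mu_m$, and showing that this is $O(\eps N+\sqrt N)$ uniformly in $m$ and $\eps$ requires the \emph{equivalence of observables} (Proposition~\ref{PropEpsOfObserv}) to pass from $\mu_m$ to the product measure $\otimes\,\mu^{\eps,\varphi_\eps'(m)}$, followed by yet another (Bakry--\'Emery) Poincar\'e inequality for the one-dimensional measure $\mu^{\eps,\varphi_\eps'(m)}$. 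Only then does the covariance term come out as $O(\eps+N^{-1/2})$ times $\int|(\mathrm{id}-NP^tP)\nabla f^*|^2\,d\mu$, which one absorbs into the orthogonal complement of the gradient decomposition via a Young-inequality splitting with a parameter $\tau\sim\eps+N^{-1/2}$. You allude to the delicate $\eps$-balance, but the concrete mechanism (Proposition~\ref{PropEpsOfObserv}) is missing from the proposal.
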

 \begin{proof} 
%
%
 	Let $ f~=~f^*_{B^-, B^+}(x) $ (recall Definition \ref{DefPotTheory} and Lemma \ref{LemDirchletPrinc}) and, for $ m\in K:=[-2,2] $, let 
 	\begin{align}
 		\bar{f}(m)~=~\int_{P^{-1}(m)} f \,  d\mu_m. 
 	\end{align}
 	
 	As in \cite[Section 2.1]{GORV}, we split the gradient $ \nabla f $ into its fluctuation part 	$ (\mathrm{id} - NP^tP) \nabla f $ and its macroscopic part $  NP^tP \nabla f $. Note that 
 	\begin{align}\label{RoughLBEqEps19}
 		\left|	(\mathrm{id} - NP^tP) \nabla f	\right|^2 ~+~  \left|	  NP^tP \nabla f	\right|^2~=~  \left|	  \nabla f	\right|^2,
 	\end{align}
 	and by \cite[Lemma 21]{GORV},
 	\begin{align}\label{RoughLBEqEps199}
 \int_{P^{-1}(m)} 	 P \nabla f	 \,  d\mu_m 
=   \frac{\bar{f}'(m)}{N} + P\cov_{\mu_m} \left( f ,   \nabla  \HH\right) \qfa m\in K=[-2,2],
 \end{align}
 	where $ \HH $ is the microscopic Hamiltonian defined in \eqref{EqMicrHam}, and, for two functions $ g,h\in L^1(\mu_m) $,
 	\begin{align}
 	\cov_{\mu_m} \left( g,h\right)~=~\int g \left(	h - \int   	h d\mu_m  	\right) d\mu_m  .
 	\end{align}
 	
 	Using \eqref{EqDisintegration}, the fact that $ |NP^tPx|^2~=~N |Px|^2 $ for all $ x \in \R^N $, Jensen's inequality and   \eqref{RoughLBEqEps199}, we obtain that 
 	\begin{align}\label{RoughLBEqEps11}
 		\begin{split}	\int  \left|	 NP^tP \nabla f	\right|^2 \, d\mu  
 			&\geq  N\int_{-m^\star_{\eps}+\eta_{\eps}}^{m^\star_{\eps}-\eta_{\eps}} \int_{P^{-1}(m)}   \left|	 P \nabla f	\right|^2 \,  d\mu_m \, d\bar{\mu}(m) \\
 			&\geq N \int_{-m^\star_{\eps}+\eta_{\eps}}^{m^\star_{\eps}-\eta_{\eps}}  \left|\int_{P^{-1}(m)}  	 P \nabla f	 \,  d\mu_m\right|^2 \, d\bar{\mu}(m)\\
 			&= N \int_{-m^\star_{\eps}+\eta_{\eps}}^{m^\star_{\eps}-\eta_{\eps}}  \left| \frac{\bar{f}'(m)}{N} + P\cov_{\mu_m} \left( f ,   \nabla  \HH\right) \right|^2 \, d\bar{\mu}(m).
 		\end{split} 
 	\end{align}
 	Then, using  Young's inequality, we have that for all $ \theta \in [0,1] $, 
 	\begin{align}\label{RoughLBEqEps111}
 		\begin{split}
 			\int  \left|	 NP^tP \nabla f	\right|^2 \, d\mu  
 			&~\geq~ 
 			(1-\theta ) \frac{1}{N}   \int_{-m^\star_{\eps}+\eta_{\eps}}^{m^\star_{\eps}-\eta_{\eps}}  \left| \bar{f}'(m) \right|^2 \, d\bar{\mu}(m) 
 			\\ &\qquad	+~ \left(1 - \frac{1}{\theta }\right) N \int_{-m^\star_{\eps}+\eta_{\eps}}^{m^\star_{\eps}-\eta_{\eps}}   \left|  P\cov_{\mu_m} \left( f ,  \nabla  \HH\right) \right|^2 \, d\bar{\mu}(m).  
 		\end{split}
 	\end{align}
 	Later in this proof we show that 
 	\begin{align}\label{RoughLBEqEps112}
 		 	\frac{1}{N}   \int_{-m^\star_{\eps}+\eta_{\eps}}^{m^\star_{\eps}-\eta_{\eps}}  \left| \bar{f}'(m) \right|^2 \, d\bar{\mu}(m) 
 		\geq 
 		\frac{\ee^{-  N \bar{H}_{\eps} (0)}}{ 2\pi\,Z_{\mu} }  
 		\,  \sqrt{  |\bar{H}_{\eps}''(0)|  }\sqrt{  \varphi_{\eps}''(0)    } 
 		\left( 1 + O \left( \sqrt{\frac{ \log(N)^3}{N}} \right) \right), 
 		\end{align}
and that for some constant $ c>0 $, which is independent of $ \eps $ and $ N $, 
 	\begin{align}\label{RoughLBEqEps113} \begin{split}
 	\int_{-m^\star_{\eps}+\eta_{\eps}}^{m^\star_{\eps}-\eta_{\eps}}   \left|  P\cov_{\mu_m} \left( f ,   \nabla  \HH\right) \right|^2 \, d\bar{\mu}(m)
 		&~\leq~ \frac{c }{N}
 		\left(  \eps  +\frac{1}{\sqrt{N}}\right)  \int \left|(\mathrm{id} - NP^tP)\nabla f\right|^2 \, d\mu \\
 		&\qquad \times \left( 1  + O(\eps)+ O \left(\frac{ 1}{ \sqrt{N}} \right) \right).
 	\end{split}
 	\end{align}
 	By combining  \eqref{RoughLBEqEps111}, \eqref{RoughLBEqEps112} and \eqref{RoughLBEqEps113} and by choosing 
 	\begin{align}
 	\theta~=~
 	\frac{ 	c  \left(  \eps  +\frac{1}{\sqrt{N}}\right)}{1+ 	c  \left(  \eps  +\frac{1}{\sqrt{N}}\right)},
 	\end{align}
 	we infer that 
 	\begin{align}
\nonumber
 	\int  \left|	 NP^tP \nabla f	\right|^2 \, d\mu  
 	\,\geq\,&
 	\frac{ 	1}{1+ 	c  \left(  \eps  +\frac{1}{\sqrt{N}}\right)} \, \frac{\ee^{-  N \bar{H}_{\eps} (0)}}{ 2\pi\,Z_{\mu} }  
 	\,  \sqrt{  |\bar{H}_{\eps}''(0)|  }\sqrt{  \varphi_{\eps}''(0)    } 
 	\left( 1 + O \left( \sqrt{\frac{ \log(N)^3}{N}} \right) \right)
 	\\ & -  ~ \int \left|(\mathrm{id} - NP^tP)\nabla f\right|^2 \, d\mu  \, \left( 1  + O(\eps)+ O \left(\frac{ 1}{ \sqrt{N}} \right) \right).  
 	\end{align}
 	Together with \eqref{RoughLBEqEps19} this yields that 
 	 	\begin{align}
 	\nonumber
 	\int  \left|	  \nabla f	\right|^2 \, d\mu  
 	~\geq ~  \frac{\ee^{-  N \bar{H}_{\eps} (0)}}{ 2\pi\,Z_{\mu} }  
 	\,  \sqrt{  |\bar{H}_{\eps}''(0)|  }\sqrt{  \varphi_{\eps}''(0)    }  \, 
 	\left( 1 + O(\eps)+ O \left( \sqrt{\frac{ \log(N)^3}{N}} \right) \right).
 	\label{RoughLBEqEps1112}
 	\end{align}
 	Applying now Lemma \ref{LemDirchletPrinc} 
 	implies \eqref{EqLowerBound}. It only remains to show \eqref{RoughLBEqEps112} and \eqref{RoughLBEqEps113}.
 	
 	\emph{Proof of \eqref{RoughLBEqEps112}.}
 	Note that by Proposition \ref{PropEpsLocalCramer},
 	\begin{align}
 		\begin{split}
 			&	\frac{1}{N}   \int_{-m^\star_{\eps}+\eta_{\eps}}^{m^\star_{\eps}-\eta_{\eps}}  \left| \bar{f}'(m) \right|^2  \, d\bar{\mu}(m) \\  
 			&~	= 	\frac{1}{N Z_{\bar{\mu}}}  \int_{-m^\star_{\eps}+\eta_{\eps}}^{m^\star_{\eps}-\eta_{\eps}} |\bar{f}'|^2 \, \frac{\sqrt{\varphi_{\eps}''(m)}}{\sqrt{2\pi}}\, \ee^{-N \bar{H}_{\eps}(m)} \, dm\, 
 			\left( 1 + O \left( \sqrt{\frac{ \log(N)^3}{N}} \right) \right).
 		\end{split}
 	\end{align}
 	Then, by the fact that $ 1 = \bar{f} ( -m^\star_{\eps}+\eta_{\eps} )   =1 - \bar{f} ( m^\star_{\eps}-\eta_{\eps} )     $ and by our knowledge on one-dimensional capacities  (see for instance \cite[Section 7.2.5]{BdH15}), 
 	\begin{align}
 		\begin{split}
 			\int_{-m^\star_{\eps}+\eta_{\eps}}^{m^\star_{\eps}-\eta_{\eps}} &|\bar{f}'|^2 \, \frac{\sqrt{\varphi_{\eps}''(m)}}{\sqrt{2\pi}}\, \ee^{-N \bar{H}_{\eps}(m)} \, dm\\
 			&~\geq 
 			\inf_{\substack{ h : \\
 					h ( -m^\star_{\eps}+\eta_{\eps} )   =1,\\ \  h  ( m^\star_{\eps}-\eta_{\eps} )   =0}}
 			\int_{-m^\star_{\eps}+\eta_{\eps}}^{m^\star_{\eps}-\eta_{\eps}} |h'|^2 \, \, \frac{\sqrt{\varphi_{\eps}''(m)}}{\sqrt{2\pi}}\,\, \ee^{-N \bar{H}_{\eps}(m)} \, dm \\
 			&~	=  \frac{1}{\sqrt{2\pi} }\left( \int_{-m^\star_{\eps}+\eta_{\eps}}^{m^\star_{\eps}-\eta_{\eps}} \,  \sqrt{\varphi_{\eps}''(r)}^{-1} \, \ee^{N\bar{H}_{\eps}(r)}\, dr\right) ^{-1}.
 		\end{split}
 	\end{align}
 	Recalling that $  \max_{m \in [-m^\star_{\eps}+\eta_{\eps},m^\star_{\eps}-\eta_{\eps}] }\bar{H}_{\eps}(m) =\bar{H}_{\eps}(0) $ and $ \sqrt{N} Z_{\bar{\mu}} =  Z_{\mu} $ by the coarea formula, we conclude \eqref{RoughLBEqEps112} from  standard Laplace asymptotics.
 	
 	\emph{Proof of \eqref{RoughLBEqEps113}.}
 	Since  $ \mu_m $ is supported on $ P^{-1}(m) $, we have that
 	\begin{align}
 		P\cov_{\mu_m} \left( f ,  \nabla  \HH\right)~=~\ieps\frac{1}{N}\, \cov_{\mu_m} \left( f , \sum_{i=0}^{N-1} x_i^3 \right),
 	\end{align}
 	Then, using H\"older's inequality and \eqref{EqEpsPI},
 	\begin{align}\label{EqEps0GradFBar}
 		\begin{split}
 			&  	\left|     P\cov_{\mu_m} \left( f ,   \nabla  \HH\right) \right|^2    
 			~\leq~  \frac{1}{\eps^2}   \frac{1}{N^2}   \var_{\mu_m} \left( f \right)   \  \var_{\mu_m} \left(    \sum_{i=0}^{N-1}x_i^3 \right)  \\
 			& \qquad ~\leq~ \frac{1}{(J-1)^2N^2}\   \int  \left|	(\mathrm{id} - NP^tP) \nabla f	\right|^2 d\mu_m  \
 			\int \left|(\mathrm{id} - NP^tP)\nabla \sum_{i=0}^{N-1}x_i^3\right|^2 d\mu_m     .
 		\end{split}
 	\end{align} 
 	It remains to show that the second integral on the right-hand side of \eqref{EqEps0GradFBar} is bounded from above by $ c'(\eps N +\sqrt{N})$ for some constant $ c'>0 $ which is independent of $ \eps $ and $ N $. 
 	
 	First we observe that by symmetry, 
 	 	\begin{align}\label{EqEpsGradH1}
 	\begin{split}
 	\int &\left|(\mathrm{id} - NP^tP)\nabla \sum_{i=0}^{N-1}x_i^3\right|^2 d\mu_m 
 	~=~ 
 	\int \sum_{i=0}^{N-1}\left|  3x_i^2 - \frac{1}{N} \sum_{j=0}^{N-1}  3 x_j^2 \right|^2 d\mu_m \\
 	&~=~ 9N
 	\int  \left|  x_0^2 - \frac{1}{N} \sum_{j=0}^{N-1}   x_j^2 \right|^2 d\mu_m \\
 	&~=~ 9N  
 	\int  x_0^4 \
 	 d\mu_m
 	 -18\sum_{j=0}^{N-1}
 	 	\int x_0^2    x_j^2
 	 d\mu_m
 	 +\frac{9}{N}\sum_{l=0}^{N-1}
 	 	\int   \sum_{j=0}^{N-1}  x_l^2x_j^2
 	 d\mu_m 	\\ 
 	 &~=~ 9(N-1)  
 	 \int  x_0^4 \
 	 d\mu_m 
 	 -9(N-1)
 	 \int x_0^2    x_1^2 \ 
 	 d\mu_m .
 	\end{split}
 	\end{align}  
Then, applying Proposition  \ref{PropEpsOfObserv},  the right-hand side of \eqref{EqEpsGradH1} is lower or equal to
 	\begin{align}\label{EqEpsLBProof}
 			9(N-1)   \int \left|  z^2 - \int  z^2 \  d\mu^{\eps,\varphi_{\eps}'(m)}  \right|^2 d\mu^{\eps,\varphi_{\eps}'(m)}\  ~+~ O_K \left(  \sqrt{N} \right) .
 	\end{align}  
 	It remains to show that 
 	 	\begin{align}\label{EqEpsLBProof1}
    \int \left|  z^2 - \int  z^2 \  d\mu^{\eps,\varphi_{\eps}'(m)}  \right|^2 d\mu^{\eps,\varphi_{\eps}'(m)}\  ~=~ O_K \left(  \eps \right) .
 	\end{align}  
 	In order to show \eqref{EqEpsLBProof1}, we again apply the Bakry-\'Emery theorem (see e.g.\ \cite[p.\ 305]{GORV} and \cite[Remark 1.2]{MS}) to observe that the  measure $ \mu^{\eps,\varphi_{\eps}'(m)} $ satisfies the Poincar\'e inequality (see \eqref{EqPI}) with constant $ (J-1)/\eps $. 
 	Hence, 
 	 	 	\begin{align}\label{EqEpsLBProof2}
 	\int \left|  z^2 - \int  z^2 \  d\mu^{\eps,\varphi_{\eps}'(m)}  \right|^2 d\mu^{\eps,\varphi_{\eps}'(m)}\  ~=~ 
 	\var_{\mu^{\eps,\varphi_{\eps}'(m)}} \left( z^2\right) 
 	~\leq~
 	 \frac{4\eps}{J-1}
\int   z^2   d\mu^{\eps,\varphi_{\eps}'(m)} .
 	\end{align}  
Then, using Lemma \ref{LemLegendre} and Lemma \ref{LemMoments} we obtain that for all $ m \in K=[-2,2] $,
 	 	 	\begin{align}\label{EqEpsLBProof22}
\begin{split}  
\int   z^2   d\mu^{\eps,\varphi_{\eps}'(m)} &~=~ \int   \left( z -\int    \bar{z}\, d\mu^{\eps,\varphi_{\eps}'(m)}(\bar{z}) \right) ^2   d\mu^{\eps,\varphi_{\eps}'(m)} + \left( \int    \bar{z}\, d\mu^{\eps,\varphi_{\eps}'(m)}(\bar{z} )\right) ^2 \\ 
&~\leq~ \hat{c}\,  \eps   ~+~m^2 ~\leq~ \hat{c} \, \eps ~+~4
\end{split}
\end{align}  
for some  constant $ \hat{c}>0 $ which is independent of $ \eps $ and $ N $. 
Combining \eqref{EqEpsLBProof2} and \eqref{EqEpsLBProof22} yields \eqref{EqEpsLBProof1}.
 	This concludes the proof of \eqref{RoughLBEqEps113}.  
 \end{proof}
 
\begin{remark}\label{RemarkJAss}
\emph{
	The proof of \eqref{RoughLBEqEps113} is the main reason for the assumption  \eqref{EqJAss}.
Indeed, in this step, we use that, under \eqref{EqJAss},  the (effective) single-site potential $ \psi_J $ defined in \eqref{EqSingleSite} is strictly convex so that we can apply the Bakry-\'Emery theorem, which in turn yields that we have a good control on the covariance term $ P\cov_{\mu_m} \left( f ,  \nabla  \HH\right) $ in \eqref{RoughLBEqEps113} for small $ \eps $.
Note that, intuitively, the quantity $ P\cov_{\mu_m} \left( f ,  \nabla  \HH\right) $ describes the microscopic fluctuation of the system around the hyperplane $ P^{-1}(m) $. 
 }
\end{remark}
 
 In \eqref{EqEpsLBProof} we    use  that we can pass from expectations with respect to   $ \mu_m  $ to expectations with respect  to $ \otimes_{i=1}^N \mu^{\eps,\varphi_{\eps}'(m)}  $. 
 Such a statement is known in the literature as the \emph{equivalence of observables} (see \cite{KwMe18a}).
 The result in our setting is formulated in the following proposition. The proof is postponed to the appendix.  

\begin{proposition}[Equivalence of observables]\label{PropEpsOfObserv}
 	Let $ K \subset \R $ be compact. 
Let $ \ell \in \N $, and let  $ b:\R^\ell \ra \R $ be such that 
\begin{align}\label{EqEquOfObserv0}
\sup_{m \in K} \int_{\R^\ell  } |b (z_{0},\dots, z_\ell  )|^2 \, d\mu^{\eps,\varphi_{\eps}'(m),\ell}(z_{\eps},\dots, z_\ell  ) ~<~ \infty ,
\end{align} 
where  $\mu^{\eps,\varphi_{\eps}'(m),\ell}~=~\otimes_{i=1}^\ell \mu^{\eps,\varphi_{\eps}'(m)}  $.
Then, there exist $ C_{b,K,\ell}, \eps_{b,K,\ell} >0  $,  $ N_{b,K,\ell} \in \N  $ such that for all   $ N \geq N_{b,K,\ell}  $, 
\begin{align}\label{EqEquOfObserv}
\sup_{0<\eps< \eps_{b,K,\ell}}	\sup_{m \in K}~		\left|  \int_{P^{-1}(m)}  b(x_{0},\dots, x_\ell  ) \, d\mu_m  -\int_{\R^\ell  } b (z_{0},\dots, z_\ell  ) \, d\mu^{\eps,\varphi_{\eps}'(m),\ell} \right|   
~  \leq~    C_{b,K,\ell} \frac{1}{\sqrt{N}} .
\end{align}
\end{proposition}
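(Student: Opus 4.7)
The plan is to realize $\mu_m$ as a conditioned product measure, and then reduce the claim to a ratio of local-CLT densities for the empirical mean of i.i.d.\ samples from $\mu^{\eps,\varphi_\eps'(m)}$.

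\textbf{Step 1 (Identification of $\mu_m$ as a conditioned product).} Set $\sigma=\varphi_\eps'(m)$ and let $\mu^{\eps,\sigma,N}=\otimes_{i=0}^{N-1}\mu^{\eps,\sigma}$. On the hyperplane $P^{-1}(m)$ the tilting factor $\ee^{\sigma\sum_i x_i}=\ee^{N\sigma m}$ is constant, so comparing with \eqref{EqFluctuationMeasure} and \eqref{EqEpsGCMeasure} shows that $\mu_m$ is exactly the conditional law of $X=(X_0,\dots,X_{N-1})\sim \mu^{\eps,\sigma,N}$ given $P(X)=m$. Moreover, by \eqref{EqLegendreDer} one has $\int z\,d\mu^{\eps,\sigma}(z)=m$, so the tilting is precisely the one that centers the mean at $m$. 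This is the only place where the specific choice $\sigma=\varphi'_\eps(m)$ is used.

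\textbf{Step 2 (Reduction to a density ratio).} Let $p_N^{\eps,m}$ denote the Lebesgue density of $P(X)$ under $\mu^{\eps,\sigma,N}$, and, for fixed $x_0,\dots,x_{\ell-1}$, let $p_{N-\ell}^{\eps,m}(\,\cdot\mid x_0,\dots,x_{\ell-1})$ denote the density at $m$ of $(1/N)\sum_{i=0}^{N-1}X_i$ conditional on $X_i=x_i$ for $i<\ell$, which is a simple rescaling of the density of the empirical mean of $N-\ell$ i.i.d.\ samples from $\mu^{\eps,\sigma}$ evaluated at $(Nm-\sum_{i<\ell}x_i)/(N-\ell)$. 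Then
\begin{align*}
\int_{P^{-1}(m)} b(x_0,\dots,x_{\ell-1})\,d\mu_m
=\int_{\R^\ell} b(x_0,\dots,x_{\ell-1})\,\frac{p_{N-\ell}^{\eps,m}(m\mid x_0,\dots,x_{\ell-1})}{p_N^{\eps,m}(m)}\,d\mu^{\eps,\sigma,\ell}(x_0,\dots,x_{\ell-1}).
\end{align*}
Hence the desired bound reduces to controlling $R_N(x):=p_{N-\ell}^{\eps,m}(m\mid x_0,\dots,x_{\ell-1})/p_N^{\eps,m}(m)-1$ uniformly in $m\in K$ and $\eps<\eps_K$.

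\textbf{Step 3 (Uniform local CLT via Proposition \ref{PropEpsLocalCramer}).} The very argument that produced \eqref{EqEpsLocalCramer} applies verbatim to the tilted product measure $\mu^{\eps,\sigma,N}$ and yields, for $\sigma=\varphi'_\eps(m)$,
\[
p_N^{\eps,m}(m)=\sqrt{\frac{N\varphi''_\eps(m)}{2\pi}}\Bigl(1+O_K\bigl(N^{-1/2}\bigr)\Bigr),
\]
and similarly, uniformly on the set where $|x_0|+\dots+|x_{\ell-1}|\le \sqrt{N}$,
\[
p_{N-\ell}^{\eps,m}\bigl(m\mid x_0,\dots,x_{\ell-1}\bigr)=\sqrt{\frac{N\varphi''_\eps(m)}{2\pi}}\Bigl(1+O_{K,\ell}\bigl(N^{-1/2}(1+|x_0|^2+\dots+|x_{\ell-1}|^2)\bigr)\Bigr).
\]
Taking the ratio, $|R_N(x)|\le C_{K,\ell} N^{-1/2}(1+|x_0|^2+\dots+|x_{\ell-1}|^2)$ on this event. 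On the complement, exponential Chebyshev applied to $\mu^{\eps,\sigma,\ell}$ gives a contribution which is super-polynomially small, and the crude bound $|R_N|\le C \ee^{c\sum|x_i|}$ combined with the moment bounds from Lemma \ref{LemMoments} absorbs it.

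\textbf{Step 4 (Conclusion by Cauchy--Schwarz).} Writing the difference in \eqref{EqEquOfObserv} as $\int b(x)\,R_N(x)\,d\mu^{\eps,\sigma,\ell}$, Cauchy--Schwarz together with the $L^2$-assumption \eqref{EqEquOfObserv0} and the pointwise bound on $R_N$ gives
\[
\Bigl|\int b\, R_N\,d\mu^{\eps,\sigma,\ell}\Bigr|
\le \Bigl(\int b^2\,d\mu^{\eps,\sigma,\ell}\Bigr)^{1/2}\Bigl(\int R_N^2\,d\mu^{\eps,\sigma,\ell}\Bigr)^{1/2}
\le C_b\,N^{-1/2},
\]
where the second factor is $O(N^{-1/2})$ because the fourth moments of the coordinates under $\mu^{\eps,\sigma}$ are uniformly bounded for $\eps<\eps_K$ and $m\in K$ (again by Lemma \ref{LemMoments} and Corollary \ref{CorAsymp2}).

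\textbf{Expected main obstacle.} The delicate point is the \emph{uniformity} of the local CLT in $\eps$: as $\eps\downarrow 0$ the measure $\mu^{\eps,\sigma}$ concentrates sharply near a minimizer of $\psi_J(\cdot)-\sigma\,\cdot$, so the effective variance $1/\varphi''_\eps(m)$ shrinks like $\eps$, and one must rescale the CLT accordingly. This is exactly the setting in which Proposition \ref{PropEpsLocalCramer} was proved, and rerunning that Fourier/Laplace analysis for the conditioned version (to control $p_{N-\ell}^{\eps,m}$ uniformly in the fixed coordinates $x_0,\dots,x_{\ell-1}$) is the technical core. The boundedness-in-$\eps$ of the characteristic function of the centered, rescaled variable $\eps^{-1/2}(X-m)$ under $\mu^{\eps,\sigma}$, and the resulting Edgeworth-type expansion, constitute the heart of the argument.
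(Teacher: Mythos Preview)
Your approach is sound and is a genuine alternative to the paper's. Both rest on the same identification of $\mu_m$ as a conditioned tilted product, but the paper takes a different route thereafter: following \cite{KwMe18a}, it introduces an extra tilt $\ee^{\sigma b(x_0)}$, differentiates the density $\tilde g_{\sigma,\eps,m,N}(0)$ of the centered sum at $\sigma=0$, and obtains via Fourier inversion
\[
2\pi\,\smallrestr{\tfrac{d}{d\sigma}}{\sigma=0}\tilde g_{\sigma,\eps,m,N}(0)
= s_\eps(m)^{-1}\int_\R \big\langle (b-\langle b\rangle)\,\ee^{i\hat z\hat\xi/\sqrt N}\big\rangle\,\big\langle \ee^{i\hat z\hat\xi/\sqrt N}\big\rangle^{N-1}\,d\hat\xi.
\]
The first bracket vanishes at $\hat\xi=0$, and one Taylor step in $\hat\xi$ extracts the factor $N^{-1/2}$; the remainder is handled exactly as in the proof of Proposition~\ref{PropEpsLocalCramer}. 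The denominator $\tilde g_{0,\eps,m,N}(0)$ is computed directly by that proposition.

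Your density-ratio route instead requires a local CLT for $g_{\eps,m,N}$ not just at $0$ but at shifted points depending on $(x_0,\dots,x_{\ell-1})$, with the resulting bound on $R_N$ needed in $L^2(\mu^{\eps,\sigma,\ell})$. Two small points: the pointwise bound in Step~3 should be expressed in the standardized variables $\hat x_i=(x_i-m)/s_\eps(m)$ rather than raw $|x_i|$, since $s_\eps(m)^2=\Omega_K(\eps)$ and it is $\sum_i\hat x_i$ that controls the shift in standardized units; with that correction the $L^2$ bound on $R_N$ follows from the uniform moment bounds in Lemma~\ref{LemMoments}~\emph{(iii)}. The paper's differentiation trick is a bit slicker precisely because it bypasses this shifted-point local CLT and the attendant truncation, encoding the cancellation $\langle b-\langle b\rangle\rangle=0$ directly in the Fourier integral.
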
 
\begin{proof}
The proof is postponed to Section \ref{SecProofEquOfObserv}.
\end{proof}

 \subsection{The mass of the equilibrium potential}\label{SecEpsNum}
 \begin{proposition}\label{PropEpsNum}
 	Consider the same setting as in Theorem \ref{ThmIntroEpsEKF}.
 	Then, for $ \eps\in (0,1] $ small enough,
 	\begin{align}\label{EqEpsNumerator}
 	\int_{(B^+)^c}f^*_{B^-,B^+}(y)\, \ee^{-\mathrm{H}(y)}\, dy ~=~
 	\frac{	\ee^{-N\bar{H}_\eps(-m^\star_\eps )}}{\sqrt{\bar{H}_\eps''(-m^\star_\eps ) }} \,\sqrt{\varphi_\eps''(-m^\star_\eps) }\
 	\left( 1+O\left(\sqrt{\frac{\log(N)^3}{N}}\right)\right).
 	\end{align}
 \end{proposition}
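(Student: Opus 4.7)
The plan is to reduce the $N$-dimensional integral in \eqref{EqEpsNumerator} to a one-dimensional Laplace asymptotic along the order parameter $P$, by combining the coarea formula with the local Cramér theorem, and then exploiting that $f^*_{B^-,B^+}$ is identically $1$ on the region where the measure $\bar\mu$ concentrates around $-m^\star_\eps$.

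First I would apply the disintegration \eqref{EqDisintegration} together with $Z_\mu = \sqrt{N}\, Z_{\bar\mu}$ (coarea formula) to rewrite
\[
\int_{(B^+)^c} f^*_{B^-,B^+}(y)\, \ee^{-\HH(y)}\, dy
~=~ Z_\mu \int_{-\infty}^{m^\star_\eps - \eta} \!\left(\int_{P^{-1}(m)} f^*_{B^-,B^+}\, d\mu_m\right) d\bar\mu(m),
\]
and then split the $m$-integration at $-m^\star_\eps + \eta$. On $(-\infty, -m^\star_\eps + \eta]$ the preimage $P^{-1}(m)$ sits in $B^-$, so $f^*_{B^-,B^+}\equiv 1$ by Definition \ref{DefPotTheory} and this piece equals $Z_\mu\, \bar\mu((-\infty, -m^\star_\eps + \eta])$; on $(-m^\star_\eps + \eta, m^\star_\eps - \eta)$ the equilibrium potential lies in $[0,1]$ by the maximum principle, so the remaining piece is bounded in absolute value by $Z_\mu\, \bar\mu((-m^\star_\eps + \eta, m^\star_\eps - \eta))$.

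For the main piece I would substitute the local Cramér representation \eqref{EqEpsLocalCramerMu} of $\bar\mu$ and run standard Laplace asymptotics around the nondegenerate global minimum $-m^\star_\eps$, using the same Taylor expansion scheme as in Steps 3--5 of the proof of Proposition \ref{PropEpsUpperBound}: Taylor-expand $\bar H_\eps$ and $\varphi_\eps''$ on a window of size $\Omega(\sqrt{\eps \log N/N})$ around $-m^\star_\eps$, and use the a priori bounds from Corollary \ref{CorAsymp2} on the derivatives of $\varphi_\eps$ to control the cubic remainder uniformly in $\eps$. The resulting Gaussian integral produces the factor $\sqrt{2\pi/(N\, \bar H_\eps''(-m^\star_\eps))}$, which combined with the $\sqrt N$ from the coarea formula and the prefactor $\sqrt{\varphi_\eps''(-m^\star_\eps)/(2\pi)}$ yields exactly the right-hand side of \eqref{EqEpsNumerator}, with error $O(\sqrt{\log(N)^3/N})$ coming from the cubic remainder in the Taylor expansion and the tail outside the window.

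Finally, for the intermediate piece I would again use \eqref{EqEpsLocalCramerMu}. On $(-m^\star_\eps + \eta, m^\star_\eps - \eta)$, local convexity of $\bar H_\eps$ near $\pm m^\star_\eps$ gives $\bar H_\eps(m) \geq \bar H_\eps(-m^\star_\eps) + \tfrac12 \bar H_\eps''(-m^\star_\eps)\, \eta^2 (1 + o(1))$ at the endpoints, and this lower bound propagates to the interior since $\bar H_\eps$ achieves its interior maximum at $0$ with value strictly greater than $\bar H_\eps(-m^\star_\eps)$; the calibration \eqref{EqEpsEta} of $\eta$ makes $\tfrac12 N \bar H_\eps''(-m^\star_\eps)\, \eta^2 = \log(N/\eps)$, so this piece is at most $\ee^{-N \bar H_\eps(-m^\star_\eps)}\cdot O(\eps/\sqrt{N})$ in absolute value and is absorbed in the claimed error. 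The only delicate point I expect is ensuring that the error bound $O(\sqrt{\log(N)^3/N})$ is uniform in $\eps$, which dictates the $\eps$-dependent a priori estimates of Step 3 in the proof of Proposition \ref{PropEpsUpperBound} and is precisely the reason for the specific choice of $\eta$ in \eqref{EqEpsEta}.
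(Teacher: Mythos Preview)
Your strategy is the same as the paper's --- coarea/disintegration, local Cram\'er, Laplace asymptotics around $-m^\star_\eps$, and bounding the intermediate region $(-m^\star_\eps+\eta, m^\star_\eps-\eta)$ via $0\le f^*\le 1$ --- and the handling of the main window and the intermediate piece is essentially what the paper does. However, there is one genuine gap in your plan.

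Your ``main piece'' is $Z_\mu\,\bar\mu\big((-\infty,-m^\star_\eps+\eta]\big)$, and you propose to compute it by substituting the local Cram\'er representation \eqref{EqEpsLocalCramerMu} and running Laplace asymptotics. But Proposition \ref{PropEpsLocalCramer} is stated only on \emph{compact} sets $K\subset\R$, so you cannot invoke \eqref{EqEpsLocalCramerMu} on the unbounded left tail. The paper deals with this by introducing a large cutoff $R>2$ (independent of $N,\eps$) and splitting the region $\{P\le -m^\star_\eps+\eta\}$ into three parts: the window $[-m^\star_\eps-\eta,-m^\star_\eps+\eta]$ (the term $I$, giving the main contribution exactly as you describe), the near tail $[-R,-m^\star_\eps-\eta]$ (term $III$, handled like your intermediate piece via local Cram\'er on $[-R,R]$), and the far tail $\{P<-R\}$ (term $IV$). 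For $IV$ the local Cram\'er theorem is unavailable, and the paper instead uses Jensen's inequality $\sum_i x_i^4 \ge N(Px)^4$ to peel off the quartic part, computes the remaining purely Gaussian integral exactly via Lemma \ref{LemAppLocalCramerQuadr}, and obtains $|IV|\le C\sqrt{\eps/N}\,\ee^{-\frac{N}{2\eps}(\frac{R^2}{2}-1)R^2}$. Combined with the a priori bound $|\bar H_\eps(-m^\star_\eps)|\le c/\eps$ (derived in \eqref{EqEpsNum50}--\eqref{EqEpsNum51}), choosing $R$ large enough makes this $o\big(\ee^{-N\bar H_\eps(-m^\star_\eps)}/\sqrt{N}\big)$ uniformly in $\eps$. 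This separate far-tail argument is the missing ingredient in your proposal.
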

 \begin{proof}     
In this proof,    $ C $   denotes a varying positive constant, which is independent of  $ \eps  $ and $N $. 

	\textbf{Step 1.} [Splitting into four regions.]\\
	Recall the definition of $ \eta  $ in \eqref{EqEpsEta}.
Let $ R>2 $ be a positive number, which is independent of $ N   $, $ \eps $ and  $ m \in K$, and whose precise value is chosen later in Step 3.
Using that $ f^*_{B^-,B^+}(y)=1 $ for $ y\in B^- $, we split the left-hand side of \eqref{EqEpsNumerator} according to  this $ R $ in the following way.
\begin{align} 
\begin{split}
&\int_{(B^+)^c} f^*_{B^-,B^+}(y)\,\ee^{-\mathrm{H}(y)}\, dy\\  
&~=~
\int_{ \{  P \,\in \, [-m^\star_\eps -\eta,-m^\star_\eps +\eta]	\}}  \ee^{-\mathrm{H}(y)}\, dy~+~
\int_{ \{    P \,\in \,[-m^\star_\eps +\eta,m^\star_\eps -\eta) 	\}} f^*_{B^-,B^+}(y)\,\ee^{-\mathrm{H}(y)}\, dy\\
&\qquad +~ 
\int_{ \{  P \,\in \,[-R,-m^\star_\eps -\eta]	  	\}}  \ee^{-\mathrm{H}(y)}\, dy
 ~+~
\int_{ \{x \in \R^N \, | \, Px\, <\, -R 		\}} \ee^{-\mathrm{H}(y)}\, dy \\
&~=:~ I~+~II~+~III~+~IV.
\end{split}
\label{EqEpsNum11}
\end{align}
In Step 2 we compute the asymptotic value of the term $ I $, and in Step 3 and 4 we show that the terms $ II, III $ and $ IV$ are of lower order than $ I $.  

\textbf{Step 2.} [Estimation of the term $ I $.]\\
Note that, using the same arguments as in Step 4 and Step 5 of the proof of Proposition \ref{PropEpsUpperBound}, for all $ m \in [-m^\star_\eps-\eta,-m^\star_\eps+\eta] $, 
\begin{align}
\begin{split}
\label{EqEpNum0}
\sqrt{\varphi''_\eps(m)} 
&~=~  \sqrt{\varphi''_\eps(-m^\star_\eps)} \, \left( 1+O_K\left(\sqrt{\frac{\eps\log(N\eps^{-1})}{N}}\right)\right), \quad \text{ and }\\
\bar{H}_\eps(m) &~=~ \bar{H}_\eps(-m^\star_\eps)    + \frac{1}{2}(m+m^\star_\eps)^2   \bar{H}_\eps''(-m^\star_\eps)  
+O_K\left( \sqrt{\eps}\sqrt{\frac{ \log(N\eps^{-1})}{N}}^3\right).
\end{split}
\end{align}  
Then, using the coarea formula in the same way that we did in Subsection \ref{SubSecEpsMuBar} and applying Proposition \ref{PropEpsLocalCramer} for the compact set $ [-R,R] $, we observe that 
\begin{align}
\begin{split}
I&~=~
\sqrt{N}\,
\int_{-m^\star_\eps-\eta}^{-m^\star_\eps+\eta}  \ee^{ - N \varphi_{N,\eps}(m) + \ieps N \frac{J}{2}  m^2 }\, dm\\
&~=~ \sqrt{N}\,
\int_{-m^\star_\eps-\eta}^{-m^\star_\eps+\eta} \ee^{   - N \bar{H}_\eps (m)   } \, \frac{\sqrt{\varphi_\eps''(m)}}{\sqrt{2\pi}} \, dm~  \left( 1+O\left(\frac{1}{\sqrt{N}}\right)\right)  .
\end{split}
\end{align}
Using  \eqref{EqEpNum0} and arguing as in the proof of Proposition \ref{PropEpsUpperBound}, we have that for $ \eps $ small enough,
\begin{align}
\nonumber
I&~=~\frac{\sqrt{N}}{\sqrt{2\pi}} \, 
\ee^{-N\bar{H}_\eps(-m^\star_\eps )}\,\sqrt{\varphi_\eps''(-m^\star_\eps) }\, 
\int_{-\eta}^{\eta}\, \ee^{-N\bar{H}_\eps''(-m^\star_\eps )\frac{m^2}{2} }
\, dm\, \left( 1+O\left(\sqrt{\frac{ \log(N)^3}{N}}\right)\right)\\ \label{EqEpsNum3}
&~=~  \frac{	\ee^{-N\bar{H}_\eps(-m^\star_\eps )}}{\sqrt{\bar{H}_\eps''(-m^\star_\eps ) }} \,\sqrt{\varphi_\eps''(-m^\star_\eps) }\
\left(  1+O\left(\sqrt{\frac{ \log(N)^3}{N}}\right)\right).
\end{align}

\textbf{Step 2.} [Estimation of the terms $ II $ and $ III $.]\\
We only consider the   term   $ II $.   The   term $ III $   can be estimated in the same way. 
By using that $ |f^*_{B^-,B^+}| \leq 1 $ and by applying the coarea formula and Proposition \ref{PropEpsLocalCramer} as in Step 1, we have that 
\begin{align}\label{EqEpsNum12}
\begin{split}
|II|&~\leq ~ 
C\, \sqrt{N}\, 
\int_{-m^\star_\eps+\eta}^{m^\star_\eps-\eta} \ee^{   - N \bar{H}_\eps (m)   } \,  \sqrt{\varphi_\eps''(m)} \, dm  .
\end{split}
\end{align}
Note that, similarly  as in Step 3 of the proof of Proposition \ref{PropEpsUpperBound}, we have that $   |\bar{H}_\eps''(-m^\star_\eps )| = \Omega(1/\eps)$. 
Together with \eqref{EqEpsUpper4}, this shows that $ I = \Omega(\ee^{-N\bar{H}_\eps(-m^\star_\eps )}) $. 
In the following we prove that $ II \leq O(\ee^{-N\bar{H}_\eps(-m^\star_\eps )}\sqrt{N}^{-1}) $, which shows that $ II $ is of lower order than $ I$.
Since 
$  \bar{H}_\eps  $ is symmetric and has its two global minima at $ \pm m^\star_\eps $, 
we have that  
\begin{align}
\inf_{m \in [-m^\star_\eps+\eta,m^\star_\eps-\eta]}\bar{H}_\eps(m) ~=~ \bar{H}_\eps(-m^\star_\eps+\eta)  . 
\end{align} 
Then, by  \eqref{EqEpsUpper4},  \eqref{EqEpNum0} and the definition of $ \eta $ (see \eqref{EqEpsEta}), 
\begin{align}
\label{EqEpsNum33}
&|II| \leq
\frac{C\sqrt{N}}{\sqrt{\eps}}\, \ee^{-N\bar{H}_\eps(-m^\star_\eps+\eta)} 
\leq
\frac{C\sqrt{N}}{\sqrt{\eps}}\,\ee^{-N(\bar{H}_\eps(-m^\star_\eps )+\bar{H}_\eps''(-m^\star_\eps )\frac{\eta^2}{2}) } 
=
\frac{C\sqrt{\eps}}{\sqrt{N}}\, \ee^{-N\bar{H}_\eps(-m^\star_\eps )}.
\end{align}	 

\textbf{Step 3.} [Estimation of the term $ IV $.]\\
Using Jensen's inequality, we have that $ \sum_{i=0}^{N-1} x_i^4 ~\geq~ N (Px)^4 $. Then, via the coarea formula, 
\begin{align}
\begin{split}
|IV|&
 ~ \leq ~
\int_{\{x \in \R^N \, | \, Px\, <\,- R 			\}}  \ee ^{ -  \ieps\sum_{i=0}^{N-1}\frac{J-1}{2}y_i^2   }\  \ee ^{ -\ieps N\frac{1}{4} (Py)^4 +  \ieps N\frac{J}{2} (Py)^2   } \ dy  \\
&~ = ~\, \sqrt{N}\, \int_{-\infty}^{-R} \ee ^{ -\ieps N\frac{1}{4} m^4 +  \ieps N\frac{J}{2} m^2   } \ \int_{P^{-1}(m)}   \ee ^{ -  \ieps\sum_{i=0}^{N-1}\frac{J-1}{2}y_i^2   }\  d\cH^{N-1}\, dm.
\end{split}
\end{align}
In Lemma \ref{LemAppLocalCramerQuadr}, we show that for all $ m\in \R $,
\begin{align}
\int_{P^{-1}(m)}   \ee ^{ -  \ieps\sum_{i=0}^{N-1}\frac{J-1}{2}y_i^2   }\  d\cH^{N-1} ~=~
\ee^{-N \ieps \frac{J-1}{2} m^2 + N \frac{1}{2} \log (2\pi \,\eps\, (J-1)^{-1})} ~ \sqrt{\frac{J-1}{\eps\, 2\pi}}.
\end{align}
Therefore, by \cite[Lemma 1.1]{BovBBM}, we have that for $ \eps $ small enough, 
\begin{align}
\nonumber
| IV| 
&\, \leq\,
\sqrt{N}\, \int_{-\infty}^{-R} \ee ^{ -\ieps N\frac{1}{4} m^4 +  \ieps N\frac{1}{2} m^2   } \  dm \ \sqrt{\frac{J-1}{\eps\, 2\pi}} 
\, \leq \,
\sqrt{N}\, \int_{-\infty}^{-R} \ee ^{ -\ieps N\frac{1}{2} (\frac{R^2}{2} - 1 ) m^2   } \  dm \ \sqrt{\frac{J-1}{\eps\, 2\pi}}\\
&~ = ~ C \int_{-\infty}^{-R\sqrt{\frac{N}{\eps} (\frac{R^2}{2} - 1 )}} \ee ^{ - \frac{1}{2}   m^2   } \, dm  
\,\leq \,  C\  \sqrt{\frac{\eps}{N}}\  \ee ^{ - \frac{1}{2}   \frac{N}{\eps} (\frac{R^2}{2} - 1 )R^2   } .
\label{EqEpsNum4}
\end{align} 
Note that $ \bar{H}_\eps(-m^\star_\eps ) \leq \frac{c}{\eps} $ for some $ c >0 $.
Indeed,   by Lemma \ref{LemMoments} \emph{(ii)}, we have that for some bounded function $ \tau $, 
\begin{align}\label{EqEpsNum50}
\begin{split}
|\varphi_\eps(-m^\star_\eps )| &~=~ \left|-\int_{-m^\star_\eps}^{0} \varphi_\eps'(m)\, dm ~+~ \varphi_\eps(0) \right| 
~=~ \left|-\ieps\int_{-m^\star_\eps}^{0}\tau(m, \eps)\, dm ~+~ \varphi_\eps(0) \right| \\
&~\leq~ \ieps \, \|\tau(\cdot, \eps)\|_{\mathrm{L}^\infty(K\,;\,dm)} \,m^\star_\eps ~+~ | \varphi_\eps(0)  |,
\end{split}
\end{align}
and by  \eqref{EqLegendre} and \eqref{EqMomentLaplace2k}, 
\begin{align}\label{EqEpsNum51}
\varphi_\eps(0)~=~- \varphi_\eps^*(0)
~=~\log \int_{\R} \ee^{  -  \ieps  \psi_J(z) }\,  dz
~\leq ~\frac{1}{2}\log \left( C \eps \right) .
\end{align}
Combining \eqref{EqEpsNum50} and \eqref{EqEpsNum51} with the definition of $ \bar{H}_\eps $, shows that $ \bar{H}_\eps(-m^\star_\eps ) \leq \frac{c}{\eps} $ for some $ c >0 $.
Then,  choosing  $ R $ large enough,  \eqref{EqEpsNum4} implies that
\begin{align}\label{EqEpsNum5}
\begin{split}
IV&~ = ~ O \left( \frac{1}{\sqrt{N}} \frac{	\ee^{-N\bar{H}_\eps(-m^\star_\eps )}}{\sqrt{\bar{H}_\eps''(-m^\star_\eps ) }} \,\sqrt{\varphi_\eps''(-m^\star_\eps) }\right) .
\end{split}
\end{align}
This shows that the term $ IV $ is of lower order than $ I $, and concludes the proof. 
\end{proof}

 \section{Rough estimates at high temperature}\label{ChapRoughEstimates}
 In this chapter, we consider the same system as in Chapter \ref{ChapSmallNoise}, but with two key differences. First, we do not consider the low-temperature regime here, that is, throughout this chapter, we suppose that $ \eps =1 $. 
 The second difference is that, instead of $ \psi(z) = z^4/4-z^2/2 $, we consider here a   class of single-site potentials   given by functions of the form $z\mapsto  \Psi(z) -\frac{J}{2}z^2 $, where $ \Psi: \mathbb{R} \to \mathbb{R} $ satisfies Assumption \ref{Assumptions}.
 
 Hence, the microscopic Hamiltonian~$\HH^{N,1}: \mathbb{R}^N \to \mathbb{R}$ in this chapter is given by
\begin{align}
\HH^{N,1}(x)~=~\sum_{i=0}^{N-1}\left( \Psi(x_i) -\frac{J}{2}x_i^2\right)  +   \frac{J}{4N } \sum_{i,j=0}^{N-1} (x_i- x_j)^2 
~=~\sum_{i=0}^{N-1}\Psi(x_i)  -   \frac{J}{2N } \sum_{i,j=0}^{N-1}  x_i x_j  ,
\end{align}
where $ J>0 $. 
 We make the following assumptions on 
the single-site potential~$\Psi $. 
\begin{assumption}\label{Assumptions}
	\begin{enumerate}[(1)]
		\item There is a splitting $\Psi= \Psi_c + \Psi_b$ for some $ \Psi_c , \Psi_b \in C^2(\R) $, and there are constants~$ 0<c ,c' < \infty$ such that $ 		\Psi_c''(z) \geq c     \mbox{ and }   |\Psi_b|_{C^2} \leq c'.  $
		\item $\Psi(z)=\Psi(-z)$ for all~$z \in \mathbb{R}$. 
		\item $ z \mapsto \Psi'(z) $ is  convex on $ [0,\infty) $.
		\item  If $ \Psi_c $ is a quadratic function of the form $ 	\Psi_c(x)~=~c_\Psi x^2 + c_\Psi'x + c_\Psi'' $
		for some $ c_\Psi , c_\Psi',c_\Psi''  \in \R$, then we suppose that $ c_\Psi > J $. 
		\item $ 1/J < \int_{\R} z^2 \, \ee^{-\Psi(z)  } \, dz  / (\int \ee^{-\Psi(z)  } \, dz)   $.  
		\item $ \sigma \mapsto \int_{\R} (\Psi''(z))^2 \, \ee^{-\Psi(z) + \sigma z  } \, dz    / (\int \ee^{-\Psi(z)+ \sigma z } \, dz)    $ is locally bounded on $ \R $.  
	\end{enumerate}
\end{assumption}
\begin{remark}
	\emph{If $ \Psi= \Psi_c $ is a quadratic function, then Assumption \ref{Assumptions} is not fulfilled for any choice of $ J $.
		However, we do not expect that  Kramers' law holds true in this case, since  the macroscopic Hamiltonian $ \bar{H}_1 $ is not of double-well form, where $ \bar{H}_1 $ is defined as in \eqref{EqEpsHBar} with $ \psi $ being replaced by the function $z\mapsto  \Psi(z) -\frac{J}{2}z^2 $ and with $ \eps = 1$.
Indeed, from \eqref{EqAppLocalCramerQuadr1}, we see that $ \bar{H}_1 $ is a quadratic function and hence not of double-well form. }
\end{remark}

This chapter is organized similarly as Chapter \ref{ChapSmallNoise}. 
That is, in Section \ref{SecPrelim} we introduce the local Cram\'er theorem and show that the macroscopic Hamiltonian  has a double-well structure. 
In Section \ref{SecEyring} we provide the proof of Theorem \ref{ThmIntroEKF}. 
The main part here  is the proof of the lower bound on the capacity.
This is done in Section \ref{SecLowerB}.

\subsection{Preliminaries}\label{SecPrelim}
  \subsubsection{Local Cram\'er Theorem.}\label{SubSecMuBar}
  Replacing $ \psi $ by the function $z\mapsto  \Psi(z) -\frac{J}{2}z^2 $ and setting $ \eps = 1$, we 
  define the Gibbs measure $\mu^{N,1}$ by \eqref{EqGibbs}, and   introduce a disintegration of $\mu^{N,1}$ as  $\mu^{N,1}(dx)\ = \ \mu^{N,1}_m(dx)  \bar{\mu}^{N,1}(dm) $
  as in  and Subsection \ref{SubSecEpsMuBar}. 
   Analogously, we define the quantities $ \varphi_{N,1} $, $ \varphi^*_1 $, $  \varphi_1$ and  $ \mu^{1,\sigma} $  by \eqref{EqEpsMuBar}, \eqref{EqEpsMuBarStar}, \eqref{EqEpsMuBarCramer} and \eqref{EqEpsGCMeasure}, respectively,
 by replacing $ \psi $ by the function $z\mapsto  \Psi(z) -\frac{J}{2}z^2 $ and setting $ \eps = 1$.
 Then, the local Cram\'er theorem in this chapter is given as follows. 
 \begin{proposition}[Local Cram\'er theorem] \label{PropLocalCramer}
 	Suppose Assumption \ref{Assumptions}.
 	Then, for $ N $ large enough,
 	\begin{align}\label{EqLocalCramer}
 	\ee^{    -N\varphi_{N,1} (m)   } 
 	\ = \ \ee^{   - N \varphi_1 (m)   }~ \frac{\sqrt{\varphi_1''(m)}}{\sqrt{2\pi}} ~\left( 1+O\left(\frac{1}{\sqrt{N}}\right)  
 	\right).
 	\end{align} 
 	In particular,  
 	\begin{align}\label{EqLocalCramerMu}
 	d \bar{\mu}^{N,1}(m) \ = \  \frac{1}{Z_{\bar{\mu}^{N,1}}} \  \ee^{   - N \bar{H}_1 (m)   } ~\frac{\sqrt{\varphi_1''(m)}}{\sqrt{2\pi}} ~\left( 1+O\left(\frac{1}{\sqrt{N}}\right) 
 	\right)  \, dm.
 	\end{align}  
 \end{proposition}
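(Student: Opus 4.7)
The plan is to follow the standard exponential-tilting approach that underlies all local central limit theorems of this type, closely paralleling the appendix proof of Proposition \ref{PropEpsLocalCramer} with $\eps$ now fixed at $1$ and the effective single-site potential being $\Psi$. Recall that, after absorbing the diagonal part of the interaction, the single-site weight appearing in the coarea disintegration (as in \eqref{EqDisintegration}--\eqref{EqFluctuationMeasure}) is exactly $\ee^{-\Psi(x_i)}$; so the entire novelty lies in verifying that Assumption \ref{Assumptions} provides the inputs needed to run the local CLT uniformly on compact sets.

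\textbf{Step 1 (tilting).} Fix a compact $K\subset\R$, pick $m\in K$, and set the conjugate variable $\sigma=\varphi_1'(m)$, so $m=(\varphi_1^*)'(\sigma)$ by Lemma \ref{LemLegendre}. Writing the density of $\mu^{1,\sigma}$ as $p_\sigma(z)=\ee^{\sigma z-\Psi(z)-\varphi_1^*(\sigma)}$ and using $\sum x_i=Nm$ on $P^{-1}(m)$ together with the Legendre identity $\sigma m-\varphi_1^*(\sigma)=\varphi_1(m)$ gives, on that hyperplane, the key factorisation
\begin{equation*}
\ee^{-\sum_{i=0}^{N-1}\Psi(x_i)} \;=\; \ee^{-N\varphi_1(m)}\prod_{i=0}^{N-1}p_\sigma(x_i).
\end{equation*}
Integrating against $d\mathcal{H}^{N-1}$ and recalling the defining relation for $\varphi_{N,1}$ yields
\begin{equation*}
\ee^{-N\varphi_{N,1}(m)} \;=\; \ee^{-N\varphi_1(m)}\int_{P^{-1}(m)}\prod_{i=0}^{N-1}p_\sigma(x_i)\,d\mathcal{H}^{N-1}(x).
\end{equation*}

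\textbf{Step 2 (local CLT).} The coarea formula applied to $S(x)=\sum x_i$ (with $|\nabla S|=\sqrt{N}$) identifies the remaining integral with $\sqrt{N}\,\rho_N(Nm)$, where $\rho_N$ is the Lebesgue density of $S_N:=\sum_{i=0}^{N-1}z_i$ and the $z_i$ are i.i.d.\ samples from $\mu^{1,\sigma}$. Under $\mu^{1,\sigma}$ the $z_i$ have mean $m$ and variance $(\varphi_1^*)''(\sigma)=1/\varphi_1''(m)$. Assumption \ref{Assumptions}(1) supplies a uniform quadratic lower bound on $\Psi_c$ modulo a bounded $C^2$ perturbation, which transfers to $\mu^{1,\sigma}$ sub-Gaussian tails, a smooth density, and a rapidly decaying characteristic function $\hat p_\sigma$, all uniformly in $\sigma$ as $m$ ranges over $K$. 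A classical Gnedenko/Edgeworth local central limit theorem then gives
\begin{equation*}
\sqrt{N}\,\rho_N(Nm)\;=\;\sqrt{\tfrac{\varphi_1''(m)}{2\pi}}\,\Bigl(1+O\bigl(N^{-1/2}\bigr)\Bigr)
\end{equation*}
uniformly for $m\in K$. Plugging this into Step 1 yields \eqref{EqLocalCramer}. The refined statement \eqref{EqLocalCramerMu} then follows by substituting the expansion into the explicit one-dimensional disintegration $d\bar\mu^{N,1}(m)=Z_{\bar\mu^{N,1}}^{-1}\ee^{-N\varphi_{N,1}(m)+N(J/2)m^2}\,dm$ and using $\bar H_1(m)=\varphi_1(m)-(J/2)m^2$.

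\textbf{Main obstacle.} The delicate point is the \emph{uniformity} of the local CLT in $\sigma$ over $K$: one needs uniform bounds of the form $|\hat p_\sigma(t)|\le 1-c\,t^2$ near the origin and, say, polynomial decay $|\hat p_\sigma(t)|\le C(1+|t|)^{-1}$ at infinity, together with uniform control of the centred moments entering the Edgeworth remainder. The first two estimates follow from the Gaussian comparison provided by Assumption \ref{Assumptions}(1) (strict convexity of $\Psi_c$ plus a bounded $C^2$ perturbation), while the uniform bound on the third centred moment follows from Assumption \ref{Assumptions}(6) together with integration by parts against $\Psi'$. Modulo these uniform estimates the argument is a routine adaptation of the appendix proof of Proposition \ref{PropEpsLocalCramer}, so I would expect the proof to be deferred to the same appendix.
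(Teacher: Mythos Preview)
Your proposal is correct and follows essentially the same approach as the paper: reduce via the Cram\'er/tilting representation to the statement $|g_{N,m}(0)-1/\sqrt{2\pi}|=O(N^{-1/2})$ uniformly on compacts, exactly as in the proof of Proposition~\ref{PropEpsLocalCramer}. The only difference is that the paper does not redo the uniform local CLT estimates but simply cites \cite[Proposition~3.1 and Lemma~3.2]{MeOt}, where precisely the needed bounds (uniform moment control and decay of the characteristic function for single-site potentials of the form in Assumption~\ref{Assumptions}(1)) are established.
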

 \begin{proof}
 	Using the same notation and proceeding as in the proof of Proposition \ref{PropEpsLocalCramer}, we observe that it suffices to show that
 	\begin{align}\label{e_local_large_deviations3}
 	\left| g_{N,m}(0) -  \frac{1}{\sqrt{2\pi}} \right| 
 	= O\left(\frac{1}{\sqrt{N}}\right).
 	\end{align}
 	However, this was already shown in \cite[Proposition 3.1 and Lemma 3.2]{MeOt}.
 \end{proof}

\subsubsection{Analysis of the energy landscape.}
In the following lemma we show that the macroscopic Hamiltonian  $ \bar{H}_1  $ has the form of a double-well function with at least quadratic growth at infinity.

\begin{lemma}\label{LemEnergyLandscape}
	Suppose  Assumption \ref{Assumptions}. 
	If $ \Psi_c $ is a quadratic function, then let $ c_\Psi $ denote the leading order coefficient. 
	Otherwise, let $ c_\Psi = \infty $. Then, we have that
	\begin{enumerate}[(i)]
		\item   $ \liminf_{|t| \ra
			 \infty } \frac{\varphi_1(t)}{t^2}~ \geq~  c_\Psi   $, \ 
		 $ \liminf_{|t| \ra
			 \infty } \frac{\bar{H}_1(t)}{t^2} ~\geq~  c_\Psi -J/2 $,
		\item  there exists $ K_J >0 $ and $ \delta >0 $ such that   $  \varphi_1'(t) \geq  (J+\delta)t  $ for all $ t \geq K_J $ and    $  \varphi_1'(t) \leq  (-J - \delta)t  $ for all $ t \leq -K_J $, and
		\item $\bar{H}_1$ has exactly three critical points located at~$- m^\star_1 $,~$0$ and~$ m^\star_1 $ for some 
		$ m^\star_1 >0$.
		Moreover,  $ \bar{H}_1''(0) < 0 $, $ \bar{H}_1''(m^\star_1 ) > 0 $ and $ \bar{H}_1''(- m^\star_1 ) >0 $.
		That is,  $\bar{H}_1$ has a local maximum at~$0$, and   the two global minima of $\bar{H}_1$ are located  at~$\pm m^\star_1 $.
%
%
	\end{enumerate}
\end{lemma}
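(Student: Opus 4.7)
The strategy for all three parts is to reduce everything to facts about the Legendre pair $(\varphi_1,\varphi_1^*)$ through Lemma~\ref{LemLegendre}, using $(\varphi_1^*)'(\sigma) = \int z\, d\mu^{1,\sigma}(z)$ and $(\varphi_1^*)''(\sigma) = \var_{\mu^{1,\sigma}}(z)$, together with the evenness of $\varphi_1$ and $\varphi_1^*$ inherited from Assumption~\ref{Assumptions}(2) (which makes $\mu^{1,-\sigma}$ the reflection of $\mu^{1,\sigma}$). For (i) in the quadratic case $\Psi_c(z) = c_\Psi z^2 + c_\Psi' z + c_\Psi''$, I would write $\Psi = c_\Psi z^2 + (\mathrm{linear}) + \Psi_b$ with $|\Psi_b|_\infty \leq c'$ and compare $\mu^{1,\sigma}$ to the Gaussian of variance $1/(2c_\Psi)$: Holley--Stroock delivers a uniform bound $\var_{\mu^{1,\sigma}}(z) = O(1)$, and Laplace's method sharpens this to $\var_{\mu^{1,\sigma}}(z) \to 1/(2c_\Psi)$ as $|\sigma| \to \infty$. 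Integrating twice produces $\varphi_1^*(\sigma) \leq \sigma^2/(4c_\Psi) + O(|\sigma|)$, whose Legendre transform satisfies $\varphi_1(t) \geq c_\Psi t^2 - O(|t|)$. In the non-quadratic case, Assumption~\ref{Assumptions}(1) combined with non-quadraticity forces $\Psi''$ to be unbounded (hence $\Psi$ strictly super-quadratic), so a Laplace argument gives $\varphi_1^*(\sigma) = o(\sigma^2)$ and therefore $\varphi_1(t)/t^2 \to \infty$. The bound on $\bar{H}_1 = \varphi_1 - Jt^2/2$ is then immediate. Part (ii) follows as a corollary via convexity: midpoint convexity gives $\varphi_1(t/2) \leq \tfrac{1}{2}(\varphi_1(t) + \varphi_1(0))$, whence $\varphi_1'(t) \geq (\varphi_1(t) - \varphi_1(0))/t \geq (c_\Psi - \eta)t$ for $t$ large and any $\eta > 0$; since $c_\Psi > J$ by Assumption~\ref{Assumptions}(4), choosing $\delta := (c_\Psi - J)/2$ delivers the claim for $t \geq K_J$, and evenness handles $t \leq -K_J$.

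For (iii), Lemma~\ref{LemLegendre} rewrites $\bar{H}_1'(m) = \varphi_1'(m) - Jm = 0$ as the fixed-point equation $m = F(m) := (\varphi_1^*)'(Jm)$. By Assumption~\ref{Assumptions}(2) the function $F$ is odd, so $0$ is always a critical point and the remaining critical points appear in symmetric pairs $\pm m^\star_1$. The Hessian at $0$ is $\bar{H}_1''(0) = 1/(\varphi_1^*)''(0) - J$, which is strictly negative by Assumption~\ref{Assumptions}(5), making $0$ a local maximum. Existence of a positive critical point follows from the intermediate value theorem applied to $g(m) := F(m) - m$: we have $g(0) = 0$, $g'(0) = J(\varphi_1^*)''(0) - 1 > 0$ by Assumption~\ref{Assumptions}(5), while $F(m)/m \to J/(2c_\Psi) < 1$ by (i) and Assumption~\ref{Assumptions}(4) forces $g(m) \to -\infty$, producing at least one $m^\star_1 > 0$ with $F(m^\star_1) = m^\star_1$.

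Uniqueness of $m^\star_1$ is the main obstacle. It reduces to showing that $F'(m) = J\var_{\mu^{1,Jm}}(z)$ is strictly decreasing on $(0,\infty)$, or equivalently that $(\varphi_1^*)'''(\sigma) = \E_{\mu^{1,\sigma}}[(z - \E_{\mu^{1,\sigma}}z)^3] < 0$ for $\sigma > 0$ (a negative-skewness statement for the tilted measure). The key input is Assumption~\ref{Assumptions}(3): convexity of $\Psi'$ on $[0,\infty)$, combined with evenness of $\Psi$, forces $U(z) := \Psi(z) - \sigma z$ to satisfy $U(z^* + w) \geq U(z^* - w)$ for all $w \geq 0$ (the second derivative of $w \mapsto U(z^*+w) - U(z^*-w)$ equals $\Psi''(z^*+w) - \Psi''(z^*-w) \geq 0$, while the function and its first derivative vanish at $w=0$ thanks to the stationarity $\Psi'(z^*) = \sigma$); hence the tilted density $\propto e^{-U}$ is pointwise dominated on the right half-line by its reflection across the mode $z^*$. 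A careful bookkeeping of this pointwise asymmetry, combined with the inequality $\Psi'(z)/z \geq \Psi'(z^*)/z^*$ for $z \geq z^*$ that follows from the same assumption, yields the required third-moment bound. Once uniqueness is secured, $g$ crosses zero transversally at $m^\star_1$ with $g'(m^\star_1) < 0$, so $(\varphi_1^*)''(Jm^\star_1) < 1/J$ and $\varphi_1''(\pm m^\star_1) > J$, giving $\bar{H}_1''(\pm m^\star_1) > 0$. The coercivity $\bar{H}_1(t) \to \infty$ from (i) together with the evenness of $\bar{H}_1$ then identifies $\pm m^\star_1$ as the two global minima.
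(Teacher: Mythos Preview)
Your overall architecture matches the paper's proof: both hinge on the single key fact that $(\varphi_1^*)'$ is concave on $[0,\infty)$ (equivalently, $\varphi_1'$ is convex there), and once this is established, your uniqueness argument via concavity of $g(m)=F(m)-m$ and the paper's argument via monotonicity of $\varphi_1''$ are dual reformulations of the same reasoning. Parts (i) and (ii) are likewise handled the same way (the paper defers (i) to \cite[III.2.6]{PatThesis} and derives (ii) from (i) by the secant inequality $\varphi_1'(t)\ge (\varphi_1(t)-\varphi_1(0))/t$, exactly as you do; your midpoint-convexity remark is not needed).

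The one substantive difference is how the concavity of $(\varphi_1^*)'$ is obtained. The paper does not prove it but cites it as the GHS inequality for single-site measures from Ellis--Monroe--Newman \cite[Theorem 1.2(c)]{MR0395659}, for which Assumption~\ref{Assumptions}(3) (convexity of $\Psi'$ on $[0,\infty)$) is precisely the hypothesis. You instead try to prove it directly. Your first step, showing $U(z^*+w)\ge U(z^*-w)$ for $w\ge 0$ via $\Psi''(z^*+w)\ge \Psi''(z^*-w)$, is correct. But the passage from this pointwise mode-asymmetry of the density to the inequality $\E_{\mu^{1,\sigma}}[(z-\E z)^3]\le 0$ about the \emph{mean} is exactly the nontrivial content of GHS, and your ``careful bookkeeping'' does not constitute a proof; density domination on one side of the mode does not by itself control the sign of the third central moment. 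So either cite the GHS result as the paper does, or supply a genuine argument here (e.g.\ the Ellis--Monroe--Newman duplication/FKG-type proof). Also note that strict decrease of $F'$ is not needed: non-strict concavity of $F$ already forces $g$ to have exactly one positive zero, which is how the paper argues.
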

\begin{proof}
		Since $ \varphi_1(t)= \varphi_1(-t)$ for all $ t\in \R $, it suffices to prove all   claims only on $ [0,\infty) $.
		
	 \emph{(i).}
	As in Lemma \ref{LemEpsEnergyLandscape}, this statement follows from a simple argument given in    \cite[Lemma III.2.6]{PatThesis}. 
	
	\emph{(ii).}
	From part \emph{(i)} and Assumption \ref{Assumptions} \emph{(4)}, we know that there exist $ K' >0 $ and $ \delta' >0 $ such that
	$ \varphi_1(t) \geq (J+\delta') t^2 $  for all $ t \geq K' $. 
	Using that $ t \mapsto \varphi_1'(t) $ is increasing (since $ \varphi_1 $ is strictly convex) we obtain that for all $ t \geq K' $,
	\begin{align}\label{LemVerifyingAssumptionsOfMOEq0}
(J+\delta') \,t^2 ~\leq~ \varphi_1(t) ~=~ \int_0^t \varphi_1'(r) \, dr + \varphi_1(0)
	~\leq~ \varphi_1'(t)t + \varphi_1(0),
	\end{align}  
	which concludes the claim.
	
	\emph{(iii).}
	Before we show the claims, note that the function	 $ z \mapsto \varphi_1'(z) $ is  convex on $ [0,\infty) $. 
	Indeed, from \cite[Theorem 1.2 c)]{MR0395659}, we know that Assumption  \ref{Assumptions} yields that 
 $ z \mapsto (\varphi_1^*)'(z) $ is   concave on $ [0,\infty) $ (cf.\ \cite[Remark IV.0.4]{PatThesis}). 
 Hence, for $ w>z $, we have that $ (\varphi_1^*)''(\varphi_1'(w)) \leq  (\varphi_1^*)''(\varphi_1'(z)) $, since, due to the  convexity of $ \varphi_1 $, we have that $ \varphi_1'(w) \geq \varphi_1'(z) $.
 Therefore, 
 \begin{align}
 \varphi_1''(w ) ~=~ \frac{1}{(\varphi_1^*)''( \varphi_1'(w))} ~\geq~   \frac{1}{(\varphi_1^*)''( \varphi_1'(z))} ~=~ \varphi_1''(z),
 \end{align}
 which shows that $ z \mapsto \varphi_1'(z) $ is  convex. 

To show that $\bar{H}_1$ admits a local maximum at $ 0 $,  we observe that, since $  \varphi_1'(0) =0 $, we have that $ \bar{H}_1'(0) = 0 $.
Moreover,   	Assumption \ref{Assumptions}   implies that $   (\varphi_1^*)''(0)   > 1/J $. 
Therefore, $   \varphi_1''(0)   < J $ and  $ \bar{H}_1''(0) < 0 $. 

It remains to show that there exists a unique point $ m^\star_1  \in(0,\infty)$ such that $ \bar{H}_1'(m^\star_1) = 0 $  and  $ \bar{H}_1''(m^\star_1) > 0 $.
Using again that $   \varphi_1''(0)   < J $,  we infer that  for $ z >0 $ small enough,
		\begin{align}
		\varphi_1'(z) ~=~ \int_0^z \varphi_1''(r) \, dr ~<~ J z.
		\end{align}
		Moreover, by part \emph{(ii)}, we know that there exists  $ m^\star_1 >z>0$ such that 
		\begin{align}\label{EqEnergyLandscape}
		\varphi_1'(m^\star_1 )~=~J m^\star_1  \quad \text{ and } \quad \varphi_1'(z) ~<~ J z \ \text{ for all }  z \in (0,m^\star_1)  .
		\end{align}
		However, the mean value theorem implies that there exists $ z' \in (0,m^\star_1)    $ such that $ \varphi_1''(z') > J $. Together with the fact that $ \varphi_1''$   is non-decreasing, this implies that 
		$ \varphi_1''(z) > J $ for all $  z \geq m^\star_1$. 
		This in turn yields that 
		\begin{align}\label{EqEnergyLandscape1}
		\varphi_1'(z ) ~>~ J z \ \text{ for all }  z > m^\star_1 \qquad \text{ and } \quad \bar{H}_1''(m^\star_1 ) ~>~ 0.
		\end{align}  
		Combining \eqref{EqEnergyLandscape} and \eqref{EqEnergyLandscape1}  shows that, at $ m^\star_1  $, there is the unique global minimum of 
		$\bar{H}_1$ on $ [0, \infty) $. 
\end{proof}

\subsection{Proof of Theorem \ref{ThmIntroEKF}} \label{SecEyring}
 In this section we provide the proof of  Theorem \ref{ThmIntroEKF}. 
 The bulk part of the proof  is a straightforward adaptation from the proof of Theorem \ref{ThmIntroEpsEKF}. 
 However,   the proof of the lower bound on the capacity is modified, since the (effective) single-site potential is not convex   in this chapter.
 The new proof of the lower bound is moved to Section \ref{SecLowerB}.
%

\begin{proofoft}{\emph{\ref{ThmIntroEKF}}.} 
Let $ \pm m^\star_1$ be the two global minimisers of the macroscopic Hamiltonian $ \bar{H}_1$.	
Let $ \eta_1 >0 $ and $ B^-_1, B^+_1\subset \R^N $ be defined by \eqref{EqEpsEta} and \eqref{EqEpsMetasets} with $ \eps=1 $.
As in  the proof of Theorem \ref{ThmIntroEpsEKF}, the starting point is the formula \eqref{EqMeanTransition}. 
Then, proceeding exactly as in the proofs of Proposition \ref{PropEpsUpperBound}  and Proposition \ref{PropEpsNum}, we can show that 
	\begin{align}
\label{EqRoughEstCap}
&\capa(B^-_1, B^+_1) ~\leq~  \frac{1}{ 2\pi } 
\ee^{-  N \bar{H} (0)}   \,  \sqrt{  |\bar{H}''(0)|  }\sqrt{  \varphi_1''(0)    }  
	~\left(1+O\left( \sqrt{\frac{\log(N)^3}{N}}\right)  \right), \ \text{ and}\\
	&\int_{(B^+_1)^c}f^*_{B^-_1,B^+_1}(y)\, \ee^{-\mathrm{H}(y)}\, dy ~=~
	\frac{	\ee^{-N\bar{H}_1(-m^\star_1 )}}{\sqrt{\bar{H}_1''(-m^\star_1 ) }} \,\sqrt{\varphi_1''(-m^\star_1) }\
	\left( 1+O\left(\sqrt{\frac{\log(N)^3}{N}}\right)\right),
\label{EqRoughEstNum}
\end{align}
which yields \eqref{EqRoughEstL}. Finally, \eqref{EqRoughEstU} follows from combining \eqref{EqRoughEstNum} with Proposition  \ref{PropNum}.
This concludes the proof of this theorem. 
\end{proofoft}

\subsection{Rough lower bound on the capacity}\label{SecLowerB}
In this section we prove the rough lower bound on the capacity.
We proceed as in the proof of Proposition \ref{PropEpsLowerBound}. 
Recall that the critical estimate in the proof of Proposition \ref{PropEpsLowerBound} is given by \eqref{RoughLBEqEps113}, where we apply the Poincar\'e inequality for the fluctuation measure  with a constant which is of order $ 1/\eps $ (see \eqref{EqEpsPI}).
By using the strict convexity of the (effective) single-site potential, \eqref{EqEpsPI} is  a consequence of the Bakry-\'Emery theorem.
Since the (effective) single-site potential is not assumed to be strictly convex in this chapter, the Bakry-\'Emery theorem is not applicable here. 
However,  instead, we can apply \cite[Theorem 1.6]{MeOt}, where it is shown that for all $ N \in \N $ and $ m \in \R $, $ \mu^{N,1}_m$ satisfies the Poincar\'e inequality with a constant $ \varrho >0 $, which is independent of  $ N   $ and   $ m $.
That is, for all $ N \in \N $ and $ m \in \R $ and for all $ f \in H^1(\mu_m^{N,1}) $,
\begin{align}\label{EqMuMPI}
\var_{\mu_m^{N,1}} \left( f \right) 
&~=~ \int  \left|	f - \int   	f   d\mu^{N,1}_m  	\right|^2 d\mu^{N,1}_m  
~\leq~ \frac{1}{\varrho}  \int  \left|	(\mathrm{id} - NP^tP) \nabla f	\right|^2 d\mu^{N,1}_m  ,
\end{align}
where $ P^tm= (1/N)  (m,\dots,m) \in \R^N $ for $ m\in \R $. 
This is the main ingredient of the proof of the following proposition. 
  
\begin{proposition}\label{PropNum}
	Consider the same setting as in Theorem \ref{ThmIntroEKF}.
	Let 
	\begin{align}
	a~=~    \frac{1}{\rho^2} \max_{m \in [-m^\star_1,m^\star_1] }   \int \left|  \Psi'' - \int   \Psi''  d\mu^{1,\varphi_1'(m)}  \right|^2 d\mu^{1,\varphi_1'(m)} ,
	\end{align}
	which is finite due to Assumption \ref{Assumptions}. Then, for $ N $ large enough,
	\begin{align}\label{RoughLBEq10}
	\capa(B^-_1, B^+_1) ~\geq~ \frac{1}{1+a} \,
	\frac{1}{ 2\pi }  
	\ee^{-  N \bar{H} (0)}   \,  \sqrt{  |\bar{H}''(0)|  }\sqrt{  \varphi_1''(0)    } 
	\left( 1 + O \left( \sqrt{\frac{ \log(N)^3}{N}} \right) \right).
	\end{align}
\end{proposition}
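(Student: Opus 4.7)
The plan is to imitate the two-scale proof of Proposition \ref{PropEpsLowerBound}, with the Bakry--\'Emery Poincar\'e inequality replaced by the uniform Poincar\'e inequality \eqref{EqMuMPI} (proved in \cite[Thm.~1.6]{MeOt}), and with the observable $\sum_i x_i^3$ from the cubic part of $\psi_J'$ replaced by $\sum_i \Psi'(x_i)$. As in the proof of Proposition \ref{PropEpsLowerBound}, let $f = f^{*}_{B^{-}_{1},B^{+}_{1}}$, decompose $|\nabla f|^{2} = |(\mathrm{id}-NP^{t}P)\nabla f|^{2} + |NP^{t}P\nabla f|^{2}$, and apply Jensen, the covariance identity from \cite[Lemma~21]{GORV}, and Young's inequality to obtain, for every $\tau\in(0,1)$, the same lower bound as in \eqref{RoughLBEqEps111} on $\int |NP^{t}P\nabla f|^{2}\,d\mu$. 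For the macroscopic term $\frac{1}{N}\int |\bar{f}'(m)|^{2}\,d\bar{\mu}(m)$, invoking Proposition \ref{PropLocalCramer} and the one-dimensional capacity computation yields verbatim the analogue of \eqref{RoughLBEqEps112}, namely the sharp leading constant $\frac{\mathrm{e}^{-N\bar{H}_{1}(0)}}{2\pi Z_{\mu^{N,1}}}\sqrt{|\bar{H}_{1}''(0)|\,\varphi_{1}''(0)}$.

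The crux is to replace \eqref{RoughLBEqEps113}. Since $\partial_{k}\mathrm{H}^{N,1}(x) = \Psi'(x_{k}) - JPx$ and $Px$ is constant on $P^{-1}(m)$, we have
\begin{align}
P\,\mathrm{cov}_{\mu^{N,1}_{m}}(f,\nabla\mathrm{H}^{N,1}) \;=\; \frac{1}{N}\,\mathrm{cov}_{\mu^{N,1}_{m}}\!\Bigl(f,\sum_{i=0}^{N-1}\Psi'(x_{i})\Bigr).
\end{align}
Applying Cauchy--Schwarz and the Menz--Otto Poincar\'e inequality \eqref{EqMuMPI} with constant $\varrho$ gives
\begin{align}
|P\,\mathrm{cov}_{\mu^{N,1}_{m}}(f,\nabla\mathrm{H}^{N,1})|^{2} \;\leq\; \frac{1}{\varrho^{2}N^{2}}\,\int |(\mathrm{id}-NP^{t}P)\nabla f|^{2}\,d\mu^{N,1}_{m}\,\cdot\,I_{m},
\end{align}
where $I_{m} := \int |(\mathrm{id}-NP^{t}P)\nabla\sum_{i}\Psi'(x_{i})|^{2}\,d\mu^{N,1}_{m}$. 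Expanding as in \eqref{EqEpsGradH1} by symmetry, $I_{m}$ equals $(N-1)\bigl[\int\Psi''(x_{0})^{2}\,d\mu^{N,1}_{m} - \int\Psi''(x_{0})\Psi''(x_{1})\,d\mu^{N,1}_{m}\bigr]$; by the equivalence of observables (Proposition \ref{PropEpsOfObserv}, whose proof extends to this setting using Assumption \ref{Assumptions}(6) to verify \eqref{EqEquOfObserv0} for $b(z_{0},z_{1})=\Psi''(z_{0})^{2}$ and $b(z_{0},z_{1})=\Psi''(z_{0})\Psi''(z_{1})$), we get $I_{m} = N\,\mathrm{var}_{\mu^{1,\varphi_{1}'(m)}}(\Psi'')\,(1+O(1/\sqrt{N}))$. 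Taking the supremum over $m\in[-m^{\star}_{1},m^{\star}_{1}]$ and integrating against $\bar{\mu}^{N,1}$ produces
\begin{align}
\int_{-m^{\star}_{1}+\eta_{1}}^{m^{\star}_{1}-\eta_{1}}\!|P\,\mathrm{cov}_{\mu^{N,1}_{m}}(f,\nabla\mathrm{H}^{N,1})|^{2}\,d\bar{\mu}^{N,1}(m) \;\leq\; \frac{a}{N}\int |(\mathrm{id}-NP^{t}P)\nabla f|^{2}\,d\mu^{N,1}\,(1+o(1)).
\end{align}

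Plugging this into the Young inequality bound and combining with the fluctuation piece $\int |(\mathrm{id}-NP^{t}P)\nabla f|^{2}\,d\mu^{N,1}$ coming from \eqref{RoughLBEqEps19}, the Dirichlet form is bounded below by
\begin{align}
\capa(B^{-}_{1},B^{+}_{1}) \;\geq\; (1-\tau)\cdot\bigl[\text{macroscopic leading term}\bigr] \;+\; \bigl(1 + (1-\tfrac{1}{\tau})a\bigr)\int |(\mathrm{id}-NP^{t}P)\nabla f|^{2}\,d\mu^{N,1}.
\end{align}
Choosing $\tau = a/(1+a)$ makes the coefficient in front of the (unsigned) fluctuation integral vanish, leaves prefactor $1-\tau = 1/(1+a)$ on the macroscopic term, and combined with the analogue of \eqref{RoughLBEqEps112} yields \eqref{RoughLBEq10}. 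The main obstacle is the replacement of the Bakry--\'Emery control used in Chapter~\ref{ChapSmallNoise}: without strict convexity of the effective single-site potential, the Poincar\'e constant of $\mu^{N,1}_{m}$ no longer tends to $\infty$ with $\eps^{-1}$, so the covariance term is only a constant multiple of the fluctuation energy rather than a vanishing one. This is precisely why the resulting lower bound carries the unavoidable multiplicative factor $1/(1+a)$ instead of recovering the Eyring--Kramers constant.
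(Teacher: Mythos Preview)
Your proposal is correct and follows essentially the same approach as the paper's proof: decompose the gradient into macroscopic and fluctuation parts, bound the macroscopic piece via the one-dimensional capacity and Proposition~\ref{PropLocalCramer}, control the covariance term via Cauchy--Schwarz, the uniform Poincar\'e inequality~\eqref{EqMuMPI}, and the equivalence of observables, and then choose $\tau = a/(1+a)$. The only cosmetic difference is that the paper records the high-temperature equivalence of observables separately as Proposition~\ref{PropEquOfObserv} rather than invoking Proposition~\ref{PropEpsOfObserv} with the remark that it extends; the substance is identical.
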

\begin{proof}
	Let $ f=f^*_{B^-_1 , B^+_1}(x) $. 
	We proceed exactly as in the proof of Proposition \ref{PropEpsLowerBound}, and obtain that for all  $ \theta \in [0,1] $, 
		\begin{align}\label{RoughLBEq111}
\begin{split}
			\int  \left|	 NP^tP \nabla f	\right|^2 \, d\mu^{N,1}  
&~\geq~ 
  (1-\theta )  ~
  \frac{\ee^{-  N \bar{H} (0)}}{ 2\pi\,Z_{\mu^{N,1}} }  
  \,  \sqrt{  |\bar{H}''(0)|  }\sqrt{  \varphi_1''(0)    } 
  \left( 1 + O \left( \sqrt{\frac{ \log(N)^3}{N}} \right) \right)
	\\ &\qquad	+~ \left(1 - \frac{1}{\theta }\right) N \int_{-m^\star_1+\eta_1}^{m^\star_1-\eta_1}   \left|  P\cov_{\mu_m^{N,1}} \left( f , \nabla  H\right) \right|^2 \, d\bar{\mu}^{N,1}(m).  
\end{split}
	\end{align}
Therefore,   choosing $ \theta = a/(1+a) $ it remains to show that 
	\begin{align} \label{RoughLBEq113}
	 \int_{-m^\star_1+\eta_1}^{m^\star_1-\eta_1}   \left|  P\cov_{\mu_m^{N,1}} \left( f , \nabla  H\right) \right|^2 \, d\bar{\mu}^{N,1}(m)
	\leq 
	\frac{a}{N} \int \left|(\mathrm{id} - NP^tP)\nabla f\right|^2 \, d\mu^{N,1}
		\left( 1 + O \left(\frac{ 1}{ \sqrt{N}} \right) \right).
	\end{align}
 
 In order to show \eqref{RoughLBEq113}, note that as in \eqref{EqEps0GradFBar}, 
	\begin{align}\label{Eq0GradFBar}
\begin{split}
&	\left|     P\cov_{\mu_m^{N,1}} \left( f , \nabla  H\right) \right|^2    
	~\leq~  \frac{1}{N^2}   \var_{\mu_m^{N,1}} \left( f \right)   \  \var_{\mu_m^{N,1}} \left(    \sum_{i=0}^{N-1}\Psi'(x_i) \right)  \\
	& \qquad ~\leq~ \frac{1}{\varrho^2N^2}\   \int  \left|	(\mathrm{id} - NP^tP) \nabla f	\right|^2 d\mu^{N,1}_m  \
 \int \left|(\mathrm{id} - NP^tP)\nabla \sum_{i=0}^{N-1}\Psi'(x_i)\right|^2 d\mu^{N,1}_m     .
\end{split}
	\end{align} 
	Then, we proceed analogously to \eqref{EqEpsGradH1} to observe that by the equivalence of ensembles (Proposition \ref{PropEquOfObserv}), 
%
	\begin{align}\label{EqGradH}
	\begin{split}
	\int &\left|(\mathrm{id} - NP^tP)\nabla \sum_{i=0}^{N-1}\Psi'(x_i)\right|^2 d\mu^{N,1}_m \\
	&
	~\leq~ N \max_{m \in [-m^\star_1 ,m^\star_1 ] }   \int \left|  \Psi'' - \int   \Psi''  d\mu^{1,\varphi_1'(m)}  \right|^2 d\mu^{1,\varphi_1'(m)}\ \left( 1 + O \left( \frac{ 1}{\sqrt{N}} \right) \right) .
	\end{split}
	\end{align}  
	This concludes the proof of \eqref{RoughLBEq113}.  
\end{proof}

It remains to show the equivalence of observables, which was used in \eqref{EqGradH}. This is done in the following proposition. 

\begin{proposition}[Equivalence of observables]\label{PropEquOfObserv}
	Let $ \ell \in \N $, and let  $ b:\R^\ell \ra [0,\infty) $ be such that 
\begin{align}
\sup_{m \in [-m^\star_1,m^\star_1]} \int_{\R^\ell  } |b (z_1,\dots, z_\ell  )|^2 \, d\mu^{1,\varphi_1'(m),\ell}(z_1,\dots, z_\ell  ) < \infty ,
\end{align} 
where  $\mu^{1,\varphi_1'(m),\ell}~=~\otimes_{i=1}^\ell \mu^{1,\varphi_1'(m)}  $.
Then there exists $ C_b \in (0,\infty)  $  such that for  $ N $ large enough,
\begin{align} 
\sup_{m \in [-m^\star_1,m^\star_1]}	~	&\left[   \int_{P^{-1}(m)}  b(x_1,\dots, x_\ell  ) \, d\mu_m^{N,1}  -\int_{\R^\ell  } b (z_1,\dots, z_\ell  ) \, d\mu^{1,\varphi_1'(m),\ell} \right|    ~\leq    C_b \frac{1}{\sqrt{N}} .
\end{align}
\end{proposition}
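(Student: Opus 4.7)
The plan is to realise $\mu_m^{N,1}$ as the conditional law of $N$ i.i.d.\ samples from $\mu^{1,\sigma}$ (with $\sigma:=\varphi_1'(m)$) given that their empirical mean equals $m$, and then to compute the conditional density of the first $\ell$ coordinates explicitly via two applications of the local Cram\'er theorem (Proposition~\ref{PropLocalCramer}). On $P^{-1}(m)$ the identity $\sum_i x_i=Nm$ gives the tilting $e^{-\sum_i\Psi(x_i)}=e^{-\sigma Nm+N\varphi_1^*(\sigma)}\prod_i\frac{d\mu^{1,\sigma}}{dz}(x_i)$, so by the coarea formula (as in Subsection~\ref{SubSecMuBar}) $\mu_m^{N,1}$ is identified with the conditional distribution of $(X_1,\dots,X_N)\sim(\mu^{1,\sigma})^{\otimes N}$ given $PX=m$. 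Writing $\rho_k$ for the density of $\frac{1}{k}\sum_{i=1}^k X_i$ under $(\mu^{1,\sigma})^{\otimes k}$ and combining the coarea formula with Proposition~\ref{PropLocalCramer} yields, uniformly on compact sets,
\begin{align*}
\rho_k(t)~=~\sqrt{\tfrac{k\,\varphi_1''(t)}{2\pi}}\,e^{-k\,f(t)}\,\bigl(1+O(k^{-1/2})\bigr),\qquad f(t):=\varphi_1^*(\sigma)-\sigma t+\varphi_1(t),
\end{align*}
where Legendre duality gives $f(m)=f'(m)=0$ and $f''(m)=\varphi_1''(m)>0$.

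Conditioning on $(X_1,\dots,X_\ell)=(z_1,\dots,z_\ell)$, the event $\{PX=m\}$ becomes $\{\tfrac{1}{N-\ell}\sum_{i>\ell}X_i=\tilde m\}$ with $\tilde m:=(Nm-\sum_{i=1}^\ell z_i)/(N-\ell)$. Consequently, the Radon--Nikodym derivative of the conditional law of $(X_1,\dots,X_\ell)$ with respect to $\mu^{1,\varphi_1'(m),\ell}$ is
\begin{align*}
R_N(z_1,\dots,z_\ell)~=~\frac{N}{N-\ell}\,\frac{\rho_{N-\ell}(\tilde m)}{\rho_N(m)}~=~\sqrt{\tfrac{\varphi_1''(\tilde m)}{\varphi_1''(m)}}\;e^{-(N-\ell)f(\tilde m)}\,\bigl(1+O(N^{-1/2})\bigr),
\end{align*}
and a Taylor expansion of $f$ about $m$ using $\tilde m-m=(\ell m-\sum_{i=1}^\ell z_i)/(N-\ell)$ gives $(N-\ell)f(\tilde m)=\tfrac12\varphi_1''(m)(\sum_{i=1}^\ell z_i-\ell m)^2/(N-\ell)+\mathrm{h.o.t.}$

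I will then write the difference of integrals in the proposition as $\int b\,(R_N-1)\,d\mu^{1,\varphi_1'(m),\ell}$ and apply Cauchy--Schwarz together with the $L^2$-hypothesis on $b$ to reduce the claim to $\|R_N-1\|_{L^2(\mu^{1,\varphi_1'(m),\ell})}=O(N^{-1/2})$, uniformly for $m\in[-m^\star_1,m^\star_1]$. On the bulk set $\{|\sum_{i=1}^\ell z_i-\ell m|\le N^{1/4}\}$ the point $\tilde m$ lies in a small fixed neighbourhood of $m$, Proposition~\ref{PropLocalCramer} applies uniformly, and the expansion of $R_N$ gives $R_N-1=O(N^{-1/2})$ pointwise; the tail contribution is absorbed by the Gaussian factor $e^{-(N-\ell)f(\tilde m)}$ combined with the exponential moments of the $z_i$'s under $\mu^{1,\varphi_1'(m)}$, which exist by Assumption~\ref{Assumptions}\,(1) ($\Psi=\Psi_c+\Psi_b$ with $\Psi_c$ strictly convex and $\Psi_b$ bounded).

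The main obstacle is the uniformity in $m$: the auxiliary point $\tilde m$ is random and can escape every fixed compact neighbourhood of $m$ when the coordinates $z_i$ are large, so Proposition~\ref{PropLocalCramer} cannot simply be invoked pointwise at $\tilde m$. The truncation above, together with the uniform-on-compacts nature of the local Cram\'er estimate and the exponential tails of $\mu^{1,\varphi_1'(m)}$, is precisely the mechanism that resolves this difficulty.
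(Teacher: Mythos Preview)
Your route is valid but genuinely different from the paper's. The paper does not compute the Radon--Nikodym derivative $R_N$ at all; instead it refers to the proof of Proposition~\ref{PropEpsOfObserv}, which follows the Fourier-analytic scheme of \cite{KwMe18a}: the difference of the two integrals is rewritten as the ratio $\smash{\frac{d}{d\sigma}\big|_{\sigma=0}\tilde g_{\sigma,1,m,N}(0)\big/\tilde g_{0,1,m,N}(0)}$, the numerator is expressed by Fourier inversion as $\int\langle(b-\langle b\rangle)e^{i\hat z\hat\xi/\sqrt N}\rangle\langle e^{i\hat z\hat\xi/\sqrt N}\rangle^{N-1}d\hat\xi$, and this is estimated by splitting at $|\hat\xi|=\delta\sqrt N$. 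The three inputs that drive the estimate are (i) the local Cram\'er theorem at the single point $m$, (ii) uniform third-moment bounds, and (iii) uniform $|\hat\xi|^{-1}$-decay of the characteristic function of $\mu^{1,\varphi_1'(m)}$; for the present setting these come directly from Proposition~\ref{PropLocalCramer} and \cite[3.2]{MeOt}. Your approach trades the characteristic-function manipulation for a second application of the local Cram\'er theorem at the random point $\tilde m$; it is conceptually cleaner (one sees $R_N$ explicitly and pairs it with the $L^2$-hypothesis via Cauchy--Schwarz), but it needs the local Cram\'er estimate uniformly over a whole neighbourhood of $[-m^\star_1,m^\star_1]$ rather than only at $m$.

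One point in your tail argument should be tightened. On the set $\{|\sum_{i\le\ell} z_i-\ell m|>N^{1/4}\}$ the displacement $|\tilde m-m|$ is only of order $N^{-3/4}$, so the ``Gaussian factor'' $e^{-(N-\ell)f(\tilde m)}\approx e^{-c(\sum z_i-\ell m)^2/(N-\ell)}$ is essentially $1$ there and does not by itself absorb anything. What actually makes the tail negligible is: (a) a \emph{uniform} bound $R_N\le C$, obtained from the local Cram\'er expansion while $\tilde m$ stays in a fixed compact and, on the far tail $|\tilde m-m|\gtrsim 1$, from a global $L^\infty$ bound on $\rho_{N-\ell}$ via $\|\rho_{N-\ell}\|_\infty\le\frac{N-\ell}{2\pi}\int|\widehat{\mu^{1,\sigma}}|^{N-\ell}$, which is $O(\sqrt{N})$ by the same characteristic-function decay the paper invokes; combined with (b) the sub-Gaussian tail $\mu^{1,\varphi_1'(m),\ell}(|\sum z_i-\ell m|>N^{1/4})\le e^{-cN^{1/2}}$ coming from Assumption~\ref{Assumptions}\,(1). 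With (a) and (b) in place, $\int_{\text{tail}}|R_N-1|^2\,d\mu^{1,\varphi_1'(m),\ell}\le(C+1)^2 e^{-cN^{1/2}}$ and your Cauchy--Schwarz step goes through. Note that the uniform $L^\infty$ bound on $\rho_{N-\ell}$ ultimately rests on the same ingredients (ii)--(iii) that the paper's argument uses, so the two proofs share the analytic core even though the packaging is different.
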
 
\begin{proof}  
	Proceeding exactly as in the proof of Proposition \ref{PropEpsOfObserv}, we observe that the claim is proven once we show that
\begin{enumerate}[(i)]
	\item the local Cram\'er theorem holds true in this setting,  
	\item $ \sup_{m\in \R}~ \sum_{k=1}^{3} \int_{\R} \left| \frac{z-m}{s_{1}(m)}\right| ^k \, d\mu^{1, \varphi_1'(m) }(z) \, < \, \infty$, where 
	$ s_{1}(m) = \varphi_1''(m)^{\frac{1}{2}} $,	and
	\item there exists  $ c >0 $ such that $  \sup_{m\in \R}~  \left|\int_{\R}  \ee^{iz \xi} d\mu^{1, \varphi_1'(m) }(z)\right| \leq c |s_{1}(m) \xi|^{-1} $ for all $ \xi \in \R $. 
\end{enumerate}
Claim (i) is shown in Proposition \ref{PropLocalCramer}, and claim (ii) and (iii) are shown in \cite[Lemma 3.2]{MeOt}. 
	This concludes the proof of  this proposition.
\end{proof}



\appendix

\section*{Appendix}
\addcontentsline{toc}{section}{Appendix}
\renewcommand{\thesubsection}{A.\arabic{subsection}}
\numberwithin{equation}{subsection}

This appendix is organized as follows. 
In Section \ref{SecLegendre} we collect several properties of   Cram\'er transforms and the cumulant generating functions. 
In Section \ref{SecLaplace}  we derive   asymptotic expressions 
for certain integrals by using standard 
Laplace asymptotics. 
Then, in Section \ref{SecLowerB} we apply these results to estimate the moments and  the Fourier transforms of the measure $ \mu^{\eps,\varphi_\eps'(m)} $ (see \eqref{EqEpsGCMeasure}) for small $ \eps $.
Finally, in Section \ref{SecProofLocalCramer} and Section \ref{SecProofEquOfObserv} we state and prove \emph{the local Cram\'er theorem} and \emph{the equivalence of observables}, respectively. 

We note that the proofs in  Section \ref{SecProofLocalCramer} and Section \ref{SecProofEquOfObserv} remain true if  we replace the effective single-site potentials $ \psi_J $ by some general strictly convex function $ V $.  

\subsection{Properties of the Cram\'er transform}\label{SecLegendre}
\begin{lemmaa}\label{LemLegendre}
	Let $ W\in C^\infty(\R)  $ be  such that $
	\liminf_{|z|\ra\infty}\, W''(z) \,> \,0 .
	$
	Let 
	\begin{align}\label{EqLogKom}
	\chi^*(\sigma)  ~=~   \log \int_{\R} \ee^{ \sigma z-    W(z) }\,  dz, \quad 
	\text{ for } \sigma \in \R ,
	\end{align}
	and let $ \chi $ denote its  Legendre transform,
	i.e.
	\begin{align}\label{EqWCramer}
	\chi(m)= \sup_{\sigma \in \mathbb{R}} \left(\sigma m - \chi^*(\sigma) \right).
	\end{align}
	For all $ \sigma\in \R  $, define 
	$ \mu^\sigma \in \cP(\R) $ by 
				\begin{align} \label{EqGCMeasure}
 d\mu^\sigma(z)  ~=~ \ee^{-\chi^*(\sigma) +\sigma z-W(z)} \, dz~=~ 	\frac{ \ee^{\sigma z-W(z)} }{ \int_{\R} \   \ee^{\sigma \bar{z}-W(\bar{z})} \, d\bar{z}}\, dz  .
	\end{align}
	Then, the following statements hold true. 
	\begin{enumerate}[(i)]
		\item $ \chi^* $ and $ \chi $
		are strictly convex and smooth. If $ W $ is even, then  $ \chi^* $ and $ \chi $ are also even.
		\item For $ m \in \R $, we have that 
		\begin{align}\label{EqLegendre}
		\chi(m)~=~\chi'(m) m -\chi^*(\chi'(m))
		\quad
		\text{ and }
		\quad
		(\chi^*)'(\chi'(m))~=~m
		.
		\end{align}  
		In particular, 
				\begin{align}\label{EqLegendreDer}
		\chi''(m)~=~\frac{1}{	(\chi^*)''(\chi'(m))}
		\quad
		\text{ and }
		\quad
	\chi'''(m)~=~\frac{-(\chi^*)'''(\chi'(m))}{	(\chi^*)''(\chi'(m))^3}
		.
		\end{align}  
		\item For all $ \sigma\in \R  $, 
		\begin{align} \label{EqLegendreDer2}
		\begin{split} 
		(\chi^*)'\left( \sigma \right) &~=~  \frac{\int_{\R} z\, \ee^{\sigma z-W(z)} \, dz}{\int_{\R} \ee^{\sigma z-W(z)} \, dz}~=~  \int_{\R} z\,  d\mu^\sigma(z)  , \\
		(\chi^*)''\left( \sigma\right) &~=~   \int_{\R} \left( z-(\chi^*)'\left( \sigma \right)\right)^{2}\,  d\mu^\sigma(z) ,   \\
		(\chi^*)'''\left( \sigma\right) &~=~   \int_{\R} \left( z-(\chi^*)'\left( \sigma \right)\right)^{3}\,  d\mu^\sigma(z) ,     \\
		(\chi^*)^{(4)}\left( \sigma \right) ~+~3\, (\chi^*)''\left(\sigma \right)^2&~=~  	 \int_{\R} \left( z-(\chi^*)'\left( \sigma \right)\right)^{4}\,  d\mu^\sigma(z)    .
		\end{split}
		\end{align}
	\end{enumerate}	  
\end{lemmaa}

\begin{proof}
	These are standard results that follow from some elementary computations. We refer to  \cite[Lemma III.2.5]{PatThesis}
and \cite[Lemma 41]{GORV} for more details. 
\end{proof}

\subsection{Some asymptotic integrals}\label{SecLaplace}

The main result in this section is the following lemma, which is based on Laplace  asymptotics.
In the proof we  use the same strategy as in   \cite[Lemma A.3]{HerrTug}.
\begin{lemmaa}\label{LemAsymp}
Let $ \cK \subset \R   $ be a compact set.	
Let $ U \in C^{0,\infty}(\cK \times \R)  $, and for $ \Rm \in \cK $, let $ U_\Rm(z) = U(\Rm,z) $.
Suppose that there exist $ \alpha >0$ and  $ R >0 $ such that, for all  $ \Rm \in \cK $, $ U_{\Rm}  $ admits a unique global minimum at some point $ z_{\Rm} \in \R $ with $ U_{\Rm}''(z_{\Rm}) > R^{-1} $ and such that  $ U_{\Rm}(z) \geq \alpha z^2 $ for all $ z\in [-R,R]^c $.
Furthermore, we assume that the map $ \Rm \mapsto z_{\Rm} $ is  bounded on $ \cK $. 
	Then, for each $ k\in \N_0 $, $ \Rm \in \cK $ and $ \eps\in (0,1] $, 
	\begin{align}\label{EqMomentLaplace2k}
	\int_{\R} \left( z-z_{\Rm}\right)^{2k}\,  \ee^{-\ieps U_{\Rm}(z)} \, dz
	\,=\,    \ee^{-\ieps U_{\Rm}(z_{\Rm})}\, \frac{\sqrt{2\pi}\, (2k-1)!!\, \eps^{k+\frac{1}{2}}}{U_{\Rm}''(z_{\Rm})^{k+\frac{1}{2}}} \, \left( 1 \,+\, O_{\cK}\left(  \sqrt{\eps\, \log (\eps^{-1})^3}\right)  \right)  ,
	\end{align}	
	where for $ n\in \N $, $ n!! $ denotes the double factorial, and we make the convention that  $ (-1)!!:=1 $.
	Moreover,
		\begin{align}\label{EqMomentLaplace2k+1}
\begin{split}
	\int_{\R} \left( z-z_{\Rm}\right)^{2k+1}\,  \ee^{-\ieps U_{\Rm}(z)} \, dz
	&=\,  -\, \ee^{-\ieps U_{\Rm}(z_{\Rm})}
	\frac{ \sqrt{2\pi}(2k+3)!! \,U_{\Rm}'''(z_{\Rm}) \eps^{k+\frac{3}{2}}}{6U_{\Rm}''(z_{\Rm})^{k+\frac{5}{2}}}
	\left(1+ O_{\cK}\left(  \sqrt{\eps \log (\eps^{-1})^3}\right)\right) .
\end{split}
	\end{align}
\end{lemmaa}
\begin{proof}
	Fix $ \Rm \in \cK $. 
	In this proof, let $ C $ denote a varying positive constant, which is independent of $ \eps $ and $ \Rm $.
	
		\textbf{Step 1.} [Proof of \eqref{EqMomentLaplace2k}.]\\ 
	Let      $ \rho = \sqrt{2(k+1)\,\eps \, \log(\eps^{-1})}/\sqrt{U_{\Rm}''(z_{\Rm})}  $ and $ \bar{U}_{\Rm}(z) = U_{\Rm}(z+z_{\Rm})  $.
	Let $ \bar{R} \geq  R+ \sup_{\Rm \in \cK} ( |z_{\Rm}| + \sqrt{|U_{\Rm}(z_{\Rm})|/\alpha} )   $ be such that, for some $ \iota >0 $, $ y^{2k} \leq e^{\iota y^2 } $ for all $ y\in [-\bar{R},\bar{R}]^c $.
	Then,
	\begin{align}\label{EqMomentLaplace3}
	\begin{split}
	\int_{\R} &\left( z-z_{\Rm}  \right)^{2k}\,  \ee^{-\ieps U_{\Rm}(z)}\, dz
	~=~
	\int_{\R}  y^{2k}\,  \ee^{-\ieps \bar{U}_{\Rm}(y)}\, dy \\
	&~=~\int_{-\rho}^\rho  y^{2k}\,  \ee^{-\ieps \bar{U}_{\Rm}(y)}\, dy~+~
	\int_{B_{\bar{R}}(0)^c}  y^{2k}\,  \ee^{-\ieps \bar{U}_{\Rm}(y)}\, dy~+~
	\int_{B_{\bar{R}}(0)\setminus B_\rho(0)}  y^{2k}\,  \ee^{-\ieps \bar{U}_{\Rm}(y)}\, dy   \\
	&~=:~ I~+~II~+~III . 
	\end{split}
	\end{align} 
	In the following we show that $ I $ provides the main contribution and that $ II $ and $ III $
are negligible.
	
	\textbf{Step 1.1.} [Estimation of the term $ I $.]\\
	Note that by Taylor's formula, for some $ \theta \in [0,1] $, 
	\begin{align}
	\label{EqMomentLaplace41}
	\bar{U}_{\Rm}(y) ~=~ U_{\Rm}(z_{\Rm}) ~+~  \frac{1}{2}\,y^2\, U_{\Rm}''(z_{\Rm})
	~+~ \frac{1}{6}\,y^3\, \bar{U}_{\Rm}'''(\theta y).
	\end{align}
	By using that   $ \bar{U}_{\Rm}'''  $  is locally bounded (uniformly in $ \Rm \in \cK $), we see that there exists some $ c>0 $ such that $   |\bar{U}_{\Rm}'''(\theta y)| \,\leq\, c $ for all $ y \in [-\rho,\rho] $.
	Therefore, 
	\begin{align}
	\label{EqMomentLaplace4}
	\ee^{\frac{-c\rho^3}{6 \eps}}
	~\leq~  \frac{\int_{-\rho}^\rho  y^{2k}\,  \ee^{-\ieps \bar{U}_{\Rm}(y)}\, dy}{\ee^{-\ieps U_{\Rm}(z_{\Rm} )}\,\int_{-\rho}^\rho  y^{2k}\,  \ee^{-\ieps \frac{1}{2}\,y^2\, U_{\Rm}''(z_{\Rm})}\, dy}
	~\leq~ \ee^{\frac{c\rho^3}{6 \eps}}.
	\end{align}
	Thus, by using the definition of $ \rho $ and by some standard Gaussian computations applied to the denominator in \eqref{EqMomentLaplace4},  
	we infer that 
	\begin{align} 
	\label{EqMomentLaplace31}
	I &~=~ \sqrt{\frac{2\pi\eps}{U_{\Rm}''(z_{\Rm})}} ~ \ee^{-\ieps U_{\Rm}(z_{\Rm} )}\left( 
	\eps^{k}\,  \frac{ (2k-1)!!}{U_{\Rm}''(z_{\Rm})^{k}}  ~+~ O_{\cK}\left( \eps^{k+\frac{1}{2}} \sqrt{ \log (\eps^{-1})^3}\right)\right).
	\end{align}
	
	\textbf{Step 1.2.} [Estimation of the term $ II $.]\\
	We know that $ \bar{U}_{\Rm}(y) \geq \alpha y^2   $ and $ y^{2k} \leq e^{\iota y^2 } $ for all $ y\in [-\bar{R},\bar{R}]^c $.
	Hence, by \cite[Lemma 1.1]{BovBBM},
	\begin{align}
	II~\leq~
	2 
	\int_{\bar{R}}^\infty \ee^{- \left(\frac{\alpha}{\eps} - \iota \right) y^2}\, dy
	~\leq ~ 
	C\,    \ee^{  -  \frac{\alpha}{\eps} \bar{R}^2} .
	\end{align}
	Since $ \alpha \bar{R}^2 > |U_{\Rm}(z_{\Rm}) | $, this shows that 
	\begin{align}  
	\label{EqMomentLaplace32}
	II &~=~  \ee^{-\ieps U_{\Rm}(z_{\Rm} )} \, O_{\cK}\left( \eps^{k+1}\sqrt{ \log (\eps^{-1})^3}\right) .
	\end{align}

	\textbf{Step 1.3.} [Estimation of the term $ III $.]\\
	Since 
	$  \bar{U}_{\Rm}  $ has its unique minimum in $ 0 $,
	we have that for $ \eps  $ small enough, 
	$ \inf_{y \in B_{\bar{R}}(0)\setminus B_\rho(0)}\bar{U}_{\Rm}(y) = \bar{U}_{\Rm}(\rho)\wedge  \bar{U}_{\Rm}(-\rho)$.
	Without restriction, we suppose that $ \bar{U}_{\Rm}(\rho) \leq \bar{U}_{\Rm}(-\rho) $.
	Then, by using   \eqref{EqMomentLaplace41} and the arguments from Step 1.1,
	\begin{align}   
	\label{EqMomentLaplace303}
	|III |&~\leq~ 2 \bar{R}\,  \bar{R}^{2k}\, \ee^{-\ieps U_{\Rm}(z_{\Rm} +\rho )}~\leq~ C \,
	\ee^{-\ieps U_{\Rm}(z_{\Rm} )} \, 
	\ee^{-\ieps U_{\Rm}''(z_{\Rm} ) \frac{1}{2}\rho^2} .
	\end{align}
	Using the definition of $ \rho $, we have shown that 
	\begin{align}   
	\label{EqMomentLaplace33}
	III &~=~  \ee^{-\ieps U_{\Rm}(z_{\Rm} )} \, O_{\cK}\left( \eps^{k+1}\sqrt{ \log (\eps^{-1})^3}\right) .
	\end{align}  

	\textbf{Step 2.} [Proof of \eqref{EqMomentLaplace2k+1}.]\\
	To show \eqref{EqMomentLaplace2k+1} we proceed exactly as in Step 1 (with   $ \bar{\rho} = \sqrt{2(k+2)\,\eps \, \log(\eps^{-1})}/\sqrt{U_{\Rm}''(z_{\Rm})}  $ replacing $ \rho $) but with  the only difference
	that  here we estimate  the leading order  term $ I $ in the following way.
	The idea is based on   Step 2.3 of the proof of \cite[Lemma A.3]{HerrTug}. 
	First, by adding one more term in the Taylor expansion in  \eqref{EqMomentLaplace41}, we have that for some $ \theta \in [0,1] $, 
	\begin{align}
	\label{EqMomentLaplace411}
	\bar{U}_{\Rm}(y) ~=~ U_{\Rm}^0 ~+~  \frac{1}{2}\,y^2\, U_{\Rm}^2
	~+~ \frac{1}{6}\,y^3\, U_{\Rm}^3
	~+~ \frac{1}{24}\,y^4\, \bar{U}_{\Rm}^{(4)}(\theta y),
	\end{align}
	where for $ i=0,1,2,3 $, we abbreviate $ U_{\Rm}^i := U_{\Rm}^{(i)} (z_{\Rm}) $.
	Then,
	\begin{align}\nonumber
	&\ee^{\ieps U_{\Rm}(z_{\Rm} )}I ~=~	\ee^{\ieps U_{\Rm}^0}\, \int_{-\bar{\rho}}^{\bar{\rho}}  y^{2k+1} \,  \ee^{-\ieps \bar{U}_{\Rm}(y)}\, dy
	~=~	\int_{-\bar{\rho}}^{\bar{\rho}}  y^{2k+1} \,  
	\ee^{-\ieps (\frac{y^2}{2} U_{\Rm}^2
		+ \frac{y^3}{6}  U_{\Rm}^3
		+ \frac{y^4}{24}  \bar{U}_{\Rm}^{(4)}(\theta y))}
	\, dy\\
		\begin{split}\label{EqMomentLaplace111}
	&~=~ - \frac{1}{6\eps}  U_{\Rm}^3 \  \int_{-\bar{\rho}}^{\bar{\rho}} y^{2k+4}\,  \ee^{-\ieps \frac{1}{2}\,y^2\, U_{\Rm}^2}      \, dy
	 ~-~ \frac{1}{24\eps}\ 
 \int_{-\bar{\rho}}^{\bar{\rho}} y^{2k+5}\, \bar{U}_{\Rm}^{(4)}(\theta y)\, 	\ee^{-\ieps \frac{y^2}{2} U_{\Rm}^2 } \, dy \\
	&\quad
	+~  \int_{-\bar{\rho}}^{\bar{\rho}} y^{2k+1}\,  	\ee^{-\ieps \frac{y^2}{2} U_{\Rm}^2 }
	\left( 	\ee^{-\ieps (\frac{y^3}{6}  U_{\Rm}^3
		+ \frac{y^4}{24 }  \bar{U}_{\Rm}^{(4)}(\theta y))}-1 + \frac{y^3}{6\eps}  U_{\Rm}^3
	+ \frac{y^4}{24\eps}  \bar{U}_{\Rm}^{(4)}(\theta y)\right) \, dy  
	\end{split}\\
	&~=:~ I_1~+~ I_2 ~+~ I_3.\nonumber
	\end{align}
	We now show that the term $ I_1 $ provides the dominant contribution and that $ I_2  $ and $ I_3  $ are of lower order than $ I_1 $.
Concerning $ I_1 $, simple Gaussian computations as in Step 1.1 yield that 
\begin{align}
I_1 ~=~ 
- \frac{1}{6}  U_{\Rm}^3 \   \sqrt{\frac{2\pi\eps}{U_{\Rm}''(z_{\Rm})}} ~  \left( 
\eps^{k+1}\,  \frac{ (2k+3)!!}{U_{\Rm}''(z_{\Rm})^{k+2}}  ~+~ O_{\cK}\left( \eps^{k+1+\frac{1}{2}} \sqrt{ \log (\eps^{-1})^3}\right)\right).
\end{align}
For $ I_2 $ we use that $ \bar{U}_{\Rm}^{(4)} $ is locally bounded to obtain that 
\begin{align}
|I_2|
~\leq~ 
C \ieps \int_{-\bar{\rho}}^{\bar{\rho}} |y|^{2k+5}\,   	\ee^{-\ieps \frac{y^2}{2} U_{\Rm}^2 } \, dy
~\leq~ 
C \eps^{k+2}.
\end{align}
Finally, to estimate the term $ I_3 $,  note that 
  $ \bar{U}_{\Rm}^{'''} $ and $ \bar{U}_{\Rm}^{(4)} $ are locally bounded, and that 
$  \bar{\rho}^3/\eps\leq C\sqrt{\eps \log(\eps^{-1})} $.
Then, by using the inequality  $ |\ee^{-x} -1 +x | \leq |x|^2 \ee^{|x|} $, 
	\begin{align}
	\begin{split}
	|I_3| &
	~\leq  ~  
\int_{-\bar{\rho}}^{\bar{\rho}} |y|^{2k+1}\,  	\ee^{-\ieps \frac{y^2}{2} U_{\Rm}^2 } \, \ee^{|\ieps (\frac{y^3}{6}  U_{\Rm}^3
	+ \frac{y^4}{24 }  \bar{U}_{\Rm}^{(4)}(\theta y))|}
\left( 	  \frac{y^3}{6\eps}  U_{\Rm}^3
+ \frac{y^4}{24\eps}  \bar{U}_{\Rm}^{(4)}(\theta y)\right)^2 \, dy\\
&
~\leq  ~   \frac{C}{\eps^2}\, \ee^{  C\frac{\bar{\rho}^3}{\eps}  } \, 
\int_{-\bar{\rho}}^{\bar{\rho}} |y|^{2k+1}\,  	\ee^{-\ieps \frac{y^2}{2} U_{\Rm}^2 } \,
(|y|^6+|y|^8) \, dy
~\leq  ~  C \eps^{k+2}.
	\end{split}
	\end{align}
	This concludes the proof of \eqref{EqMomentLaplace2k+1}.	
\end{proof}

\begin{corollarya}\label{CorAsymp2}
	Consider the same setting as in Lemma \ref{LemAsymp}. Then, 
	\begin{align}\label{EqMomentLaplace2}
	\frac{ \int_{\R} \left( z-z_{\Rm}\right)^{2k}\,  \ee^{-\ieps U_{\Rm}(z)} \, dz}{\int_{\R}  \ee^{-\ieps U_{\Rm}(z)} \, dz} &~=~     \eps^{k}\,   \frac{ (2k-1)!!}{U_{\Rm}''(z_{\Rm})^{k}} ~+~ O_{\cK}\left( \eps^{k+\frac{1}{2}}\sqrt{ \log (\eps^{-1})^3}\right)   ,\qaq\\
	\label{EqMomentLaplace1}
	\frac{ \int_{\R}   (z-z_{\Rm} )^{2k+1}\,    \ee^{-\ieps U_{\Rm}(z)} \, dz}{\int_{\R}  \ee^{-\ieps U_{\Rm}(z)} \, dz} &~=~ 
	- \frac{  (2k+3)!! \,U_{\Rm}'''(z_{\Rm}) \eps^{k+1}}{6U_{\Rm}''(z_{\Rm})^{k+2}}  + O_{\cK}\left( \eps^{k+\frac{3}{2}} \sqrt{ \log (\eps^{-1})^3}\right) .
	\end{align}  
	Moreover,  
	\begin{align}\label{EqMomentLaplace24}
	\frac{ \int_{\R} \left( \bar{z}-\frac{\int_{\R} z\, \ee^{-\ieps U_{\Rm}(z)} \, dz}{\int_{\R} \ee^{-\ieps U_{\Rm}(z)} \, dz}\right)^{2k}\,  \ee^{-\ieps U_{\Rm}(\bar{z})} \, d\bar{z}}{\int_{\R}  \ee^{-\ieps U_{\Rm}(z)} \, dz} &=     \eps^{k}\,  \frac{ (2k-1)!!}{U_{\Rm}''(z_{\Rm})^{k}}  + O_{\cK}\left( \eps^{k+1}\sqrt{ \log (\eps^{-1})^3}\right)   ,
\\
\label{EqMomentLaplace13}
\begin{split}
	\frac{ \int_{\R} \left( \bar{z}-\frac{\int_{\R} z\, \ee^{-\ieps U_{\Rm}(z)} \, dz}{\int_{\R} \ee^{-\ieps U_{\Rm}(z)} \, dz}\right)^{2k+1}\,  \ee^{-\ieps U_{\Rm}(\bar{z})} \, d\bar{z}}{\int_{\R}  \ee^{-\ieps U_{\Rm}(z)} \, dz} &=    
	- \frac{ 2k (2k+1)!! \,U_{\Rm}'''(z_{\Rm}) \eps^{k+1}}{6U_{\Rm}''(z_{\Rm})^{k+2}}  + O_{\cK}\left( \eps^{k+\frac{3}{2}} \sqrt{ \log (\eps^{-1})^3}\right) .
\end{split}
	\end{align} 
\end{corollarya}
\begin{proof}
	To show \eqref{EqMomentLaplace2}, similarly as in  Step 5 in the proof of \cite[Lemma A.3]{HerrTug}, we apply \eqref{EqMomentLaplace2k} both to the numerator and to the denominator on the left-hand side of   \eqref{EqMomentLaplace2}. 
	Analogously,  we apply \eqref{EqMomentLaplace2k+1}   to the numerator and \eqref{EqMomentLaplace2k} to the denominator   to show \eqref{EqMomentLaplace1}.  
	
	To show  \eqref{EqMomentLaplace24}, we first introduce the measure $ d\nu(z) = \ee^{-\ieps U_{\Rm}(\bar{z})} / (\int_{\R}  \ee^{-\ieps U_{\Rm}(z)} \, dz)\, d\bar{z}  $. Then, the left-hand side of \eqref{EqMomentLaplace24} is equal to 
	\begin{align}\label{EqMomentLaplace21}
	\begin{split}
	& \int_{\R} \left(z-z_{\Rm}  \right)^{2k}\,  d\nu(z)
	 ~+~ 
	 \sum_{\ell=0}^{2k-1} \binom{2k}{\ell} \left(
	 \int_{\R}(z_{\Rm}-z) \,  d\nu(z) \right)^{2k-\ell}
	 \int_{\R} \left(z-z_{\Rm}  \right)^{\ell}\,  d\nu(z)  .
	\end{split}
	\end{align} 
	Using \eqref{EqMomentLaplace2} and  \eqref{EqMomentLaplace1}, it is easy to see that for each $ \ell = 0,\dots, 2k-1 $, 
		\begin{align}\label{EqMomentLaplace211}
	\begin{split} \left(
	\int_{\R}(z_{\Rm}-z) \,  d\nu(z) \right)^{2k-\ell}
	\int_{\R} \left(z-z_{\Rm}  \right)^{\ell}\,  d\nu(z) 
	~=~ O_{\cK}\left(\eps^{2k-\ell+\lceil\frac{\ell}{2} \rceil} \right)
	~\leq~ O_{\cK}\left(\eps^{k+1} \right).
	\end{split}
	\end{align} 
	Combining \eqref{EqMomentLaplace21}, \eqref{EqMomentLaplace211} and \eqref{EqMomentLaplace2} yields   \eqref{EqMomentLaplace24}.  
	
	It remains to show \eqref{EqMomentLaplace13}. 
	Similarly as in \eqref{EqMomentLaplace21}, we have that the left-hand side of  \eqref{EqMomentLaplace13} is equal to 
	\begin{align}\label{EqMomentLaplace131}
	& \int_{\R} \left(z-z_{\Rm}  \right)^{2k+1}\,  d\nu(z)
	~+~  (2k+1)
	\int_{\R}(z_{\Rm}-z) \,  d\nu(z)  
	\int_{\R} \left(z-z_{\Rm}  \right)^{2k}\,  d\nu(z) \\ \label{EqMomentLaplace132}
	&~+~
	\sum_{\ell=0}^{2k-1} \binom{2k+1}{\ell} \left(
	\int_{\R}(z_{\Rm}-z) \,  d\nu(z) \right)^{2k+1-\ell}
	\int_{\R} \left(z-z_{\Rm}  \right)^{\ell}\,  d\nu(z) .
	\end{align} 
	As above, we observe that all the summands in \eqref{EqMomentLaplace132} are of lower order. 
	Then, using \eqref{EqMomentLaplace2} and \eqref{EqMomentLaplace1} for the two terms in  \eqref{EqMomentLaplace131}, 
	we infer \eqref{EqMomentLaplace13}. 
\end{proof}

Finally in this section, we note that Lemma \ref{LemAsymp} and Corollary \ref{CorAsymp2} remain true if the function $ U $ is allowed to slightly depend on $ \eps $. This is the content of the following corollary.
\begin{corollarya}\label{CorAsymp3}
Let $ \cK \subset \R   $ be a compact set and let $ \eps_{\cK}>0 $.	
Let $ \tilde{V} \in C^{0,\infty}(\cK  \times \R)  $ and let $ \tau :\cK \times [-\eps_{\cK},\eps_{\cK}] \ra \R $ be  such that 
\begin{align}
	 \sup_{0<\eps<\eps_{\cK}} \sup_{m \in \cK  } |\tau(m, \eps)| <\infty  .
\end{align}
Let $ V:\cK \times [-\eps_{\cK},\eps_{\cK}] \times \ra \R $ be defined by  
\begin{align}
 V(m,\eps,z)  = \tilde{V}(m,z) + \tau(m, \eps) z  \qfa \  (m,\eps,z) \in \cK \times [-\eps_{\cK},\eps_{\cK}] \times \R.
 \end{align}
For each $ (m,\eps) \in \cK \times [-\eps_{\cK},\eps_{\cK}]   $, we write 
 $  V_{m,\eps}(z)  = V(m,\eps,z)   $ for all $ z\in\R $.
Suppose that there exists $ \alpha >0$ and  $ R >0 $ such that, for all  $ (m,\eps) \in \cK \times [-\eps_{\cK},\eps_{\cK}] $, $ V_{m,\eps}  $ admits a unique global minimum at some point $ z_{\Rm,\eps} \in \R $ with $ V_{\Rm,\eps}''(z_{\Rm,\eps}) > R^{-1} $ and such that  $ V_{\Rm,\eps}(z) \geq \alpha z^2 $ for all $ z\in [-R,R]^c $.
Furthermore, we assume that the map $ (\Rm,\eps) \mapsto z_{\Rm,\eps} $ is  bounded on $ \cK \times [-\eps_{\cK},\eps_{\cK}] $.
	Then, the statements \eqref{EqMomentLaplace2}--\eqref{EqMomentLaplace13} hold true if  $ U_m    $ is replaced by $ V_{m,\eps} $ and $ z_m $ by $ z_{\Rm,\eps} $.
\end{corollarya}
\begin{proof}
	We only have to repeat the proofs of Lemma \ref{LemAsymp} and Corollary \ref{CorAsymp2} with $ V_{m,\eps}  $ replacing $ U_{m} $. 
	Since all the arguments we used in these proofs also work for $ V_{\Rm,\eps} $, this concludes the proof.
\end{proof}

\subsection{A priori estimates for the measure $ \mu^{\eps , \varphi_\eps'(m)} $}

  For the proof of the local Cram\'er theorem and   the equivalence of observables we need    some estimates on certain moments and Fourier transforms of $ \mu^{\eps , \varphi_\eps'(m)} $ (see \eqref{EqEpsGCMeasure}).

\begin{lemmaa}  \label{LemMoments}
	Recall the definition of $ \psi_J $, $ \varphi^*_{\eps } $, $ \varphi_{\eps } $ and $ \mu^{\eps , \sigma} $ given in  \eqref{EqSingleSite}, \eqref{EqEpsMuBarStar}, \eqref{EqEpsMuBarCramer} and \eqref{EqEpsGCMeasure}.
Notice that the inverse $ (\psi_J' )^{-1} $ of $ \psi_J'  $ exists.
	\begin{enumerate}[(i)]
		\item 
		Let $ \tilde{K}\subset \R $ be compact. Then, for all $  \lambda   \in \tilde{K}  $,  $ (\varphi^*_{\eps })'\left( \ieps \lambda \right)~=~ (\psi_J' )^{-1} (\lambda) \, +\, \Omega_{\tilde{K} }(\eps)  $.
		
		\item For all compact intervals $ K \subset \R $    there exists $ \eps_K>0 $ and a   function 
		$ \tau :K \times [-\eps_K,\eps_K] \ra \R $  such that 
		$ \sup_{0<\eps<\eps_K} \sup_{m \in K  } |\tau(m, \eps)| <\infty  $
		and $ 	\varphi_{\eps }'(m) = \ieps \tau(m, \eps) $ for all $ m \in K $ and   $ 0<\varepsilon<\varepsilon_{K} $.
		\item For $ m \in \R $, let 
		\begin{align}\label{EqSEps}
			s_{\eps}(m)~=~[(\varphi_{\eps }^*)''(\varphi'_{\eps }(m))]^{\frac{1}{2}}.
		\end{align}
	Note that $ s_\eps $ is well-defined, since $  \varphi_\eps^* $ is strictly convex (see Lemma \ref{LemLegendre}).
		Then, for each compact interval $ K \subset \R $, there exist     $ C_K>0 $ and $\eps_K>0 $ such that for all $ m \in K $ and for all $ 0<\varepsilon<\varepsilon_{K} $,
		\begin{align}\label{EqMoments1}
 s_{\eps}(m)^2~=~  \Omega_{K}(\eps) 
		\quad \text{ and } \quad
		\sum_{k=1}^{4} \int_{\R} \left| \frac{ z-m}{s_{\eps}(m)}\right|^{k}\,  \, d\mu^{\eps,\varphi_{\eps }'(m)}(z)
		~\leq~ C_K.
		\end{align}
	\end{enumerate}
\end{lemmaa}

\begin{proof}
	\emph{(i).}		
	Note that for all $ \lambda \in \tilde{K} $, the function $ U( \lambda  ,z) = \psi_J (z) - \lambda z $ satisfies the same conditions as the function $ U $ from Corollary \ref{CorAsymp2}.
	In particular, $ U_{ \lambda  }  $ admits a unique global minimum at  $ (\psi_J' )^{-1} (\lambda)  $. Thus, part \emph{(i)} follows immediately from Lemma \ref{LemLegendre} and \eqref{EqMomentLaplace1}.  
	
	\emph{(ii).} Let $ K=[a,b] $ for some $ a,b\in \R $ with $ a<b $.
	Set $ F(m) =  	(\varphi^*_{\eps })'\left(  \psi_J'  (m )  /\eps \right)  $.
	From part \emph{(i)}, we know that for $ \eps $ small enough, 
	\begin{align}
	\begin{split}
	F\left( a-1\right)  &~=~  a-1 ~+~\Omega_{[a-1,b+1]}(\eps) ~<~ a, \qaq \\
	F\left( b+1\right) &~=~  b+1 ~+~\Omega_{[a-1,b+1] }(\eps) ~>~ b.
	\end{split}
	\end{align}
	Therefore, by the continuity of $ F $ and the mean value theorem, $ F([a-1,b+1]) \supset K$. We also know that $ F: [a-1,b+1] \ra F([a-1,b+1]) $ is bijective, since $ F $ is strictly increasing. 
	Setting now $ \tau(m, \eps) =  \psi_J'  (F^{-1}( m)) $ for $ m \in K $
	yields  that 
	\begin{align}
	(\varphi^*_{\eps })'\left( \ieps \tau(m, \eps) \right)= m \qquad \text{for all }m \in K .
	\end{align}
	Since $ \varphi_{\eps }'=((\varphi^*_{\eps })')^{-1} $ (cf. \eqref{EqLegendre}), this 
	concludes the proof of part \emph{(ii)}.
	
	\emph{(iii).}  
	Let $ V(m,\eps,z) = \psi_J (z) -   \tau(m, \eps) z $.
	Then, using part \emph{(ii)},
	 Lemma \ref{LemLegendre} and Corollary \ref{CorAsymp3}, we know that  for $ k=2,4 $ and for all $ m \in K  $,  
	\begin{align}\label{EqMoments2}
\begin{split}
\int_{\R} \left|   z-m \right|^{k}\,  d\mu^{\eps, \varphi_{\eps }'(m)}(z)&~=
\frac{ \int_{\R} \left( \bar{z}-\frac{\int_{\R} z\, \ee^{ -\ieps V_{m,\eps}(z)} \, dz}{\int_{\R}  \ee^{ -\ieps V_{m,\eps}(z)} \, dz}\right)^{k}\,  \ee^{-\ieps V_{m,\eps}(\bar{z})} \, d\bar{z}}{\int_{\R}  \ee^{ -\ieps V_{m,\eps}(z)} \, dz}\\
&~=\, \eps^{\frac{k}{2}}\,    \frac{ (k-1)!!}{\psi_J ''((\psi_J' )^{-1} (\tau(m, \eps)))^{\frac{k}{2}}} ~+~ O_{K}\left( \eps^{\frac{k+1}{2}}\sqrt{ \log (\eps^{-1})^3}\right).
\end{split}
\end{align}
Then, for $ k=2 $, the left-hand side of \eqref{EqMoments2} equals $ s_{\eps}(m)^2 $ (cf. Lemma \ref{LemLegendre}).
	Thus, \eqref{EqMoments2} proves the first claim in \eqref{EqMoments1}, since the map $ ( m,\eps) \mapsto \psi_J ''((\psi_J' )^{-1} (\tau(m, \eps))) $ is locally bounded. 
	Moreover, due to H\"older's inequality, to show the second claim in \eqref{EqMoments1}, it suffices to show that there exists $ \eps_K'>0 $ such that 
	\begin{align}\label{EqMoments}
 \sup_{0<\eps<\eps_K'}	\sup_{m\in K}~	\int_{\R} \left| \frac{ z-m}{s (m)}\right|^{4}\,  \, d\mu^{\eps,  \varphi_{\eps }'(m)}(z)
	~<~ \infty.
	\end{align}
	However, combining \eqref{EqMoments2} for $ k =4 $ and the first claim in \eqref{EqMoments1},  implies \eqref{EqMoments}. 
This conclude the proof of   part \emph{(iii)}.
\end{proof}

\begin{lemmaa}\label{LemFourierTrafo}
Consider the same setting as in Lemma \ref{LemMoments}.
Let $ K \subset \R $ be compact, and abbreviate $ \hat{z}(m) =  (z-m)/s (m)  $.
Then, there exists $ C_K, \eps_K >0 $ such that for all   $ \hat{\xi} \in \R \setminus \{0\}$,
 \begin{align}\label{EqLocalCramer6}
 \sup_{0<\eps<\eps_K}	\sup_{m\in K }~  \left| 
 \int_{\R}
  \ee^{i\hat{z}(m)\hat{\xi}}  \  d\mu^{\eps,  \varphi_{\eps }'(m)}(z)\right| ~\leq~ \frac{C_K}{|\hat{\xi}|}.
 \end{align}
 \end{lemmaa}
 \begin{proof}
 Fix $ m \in K $. 
  		In this proof $ C \in (0,\infty) $ denotes a  constant, which is independent of  $ \eps  $ and $ m  $,  and may change every time it appears.
  		
Let $ V_{m,\eps}(z) = \psi_J(z) - \tau(m, \eps)z $, where $ \tau(m, \eps) $ is introduced in Lemma \ref{LemMoments}.
 Then,   by partial integration (as in \cite[p.\ 37]{MeOt}) and by   \eqref{EqMoments1},
 \begin{align}\label{EqLocalCramer7}
 \begin{split}
 \left| 
  \int_{\R}
   \ee^{i\hat{z}(m)\hat{\xi}}  \  d\mu^{\eps,  \varphi_{\eps }'(m)}(z)\right|
    &~=~   
 \frac{s_\eps(m)}{|\hat{\xi}|}\, \ieps\,
 \left| 
 \frac{\int_{\R} \ee^{i\hat{z}(m)\hat{\xi}} \,   V_{m,\eps}'(z)  \, \ee^{ - \ieps V_{m,\eps}(z)}\, dz
 }{\int_{\R}   \ee^{ - \ieps V_{m,\eps}(z)}\, dz} \right| \\
 &~\leq ~  
 \frac{C}{|\hat{\xi}|\, \sqrt{\eps}}\,  
  \frac{\int_{\R} | V_{m,\eps}'(z)|  \, \ee^{ - \ieps V_{m,\eps}(z)}\, dz
  }{\int_{\R}   \ee^{ - \ieps V_{m,\eps}(z)}\, dz}.
 \end{split}
 \end{align}
 Let $ z_{m,\eps} $ be the unique global minimum of $ V_{m,\eps} $, and let $ \rho = C'\,\sqrt{\eps \, \log(\eps^{-1})}  $ for some $ C'  >0 $ large enough.
 Then, using the same arguments as in   the proof of Lemma \ref{LemAsymp},
 we see that the integral in the  numerator on the right-hand side of \eqref{EqLocalCramer7} is concentrated around 
 $ B_\rho ( z_{m,\eps}) $, i.e.
 \begin{align}\label{EqLocalCramer10}
 \begin{split}
 \int_{\R} | V_{m,\eps}'(z)|  \, \ee^{ - \ieps V_{m,\eps}(z)}\, dz& =    
 \int_{-\rho}^{\rho} | V_{m,\eps}'(z_{m,\eps}+z)|  \, \ee^{ - \ieps V_{m,\eps}(z_{m,\eps}+z)}\, dz + O_{K}(\eps^{2}\, \ee^{ - \ieps V_{m,\eps}(z_{m,\eps})})
 .
 \end{split}
 \end{align}
 Moreover, by Taylor's formula for some $ \theta,\theta'\in[0,1] $ (cf.  \eqref{EqMomentLaplace41}),
 \begin{align}\label{EqLocalCramer9}
 \begin{split}   
  \int_{-\rho}^{\rho} &| V_{m,\eps}'(z_{m,\eps}+z)|  \, \ee^{ - \ieps V_{m,\eps}(z_{m,\eps}+z)}\, dz  \\
 &~= ~  \ee^{ - \ieps V_{m,\eps}(z_{m,\eps})} \,\int_{-\rho}^{\rho} | z V_{m,\eps}''(z_{m,\eps}+\theta z)|  \, \ee^{ - \ieps V_{m,\eps}''(z_{m,\eps}) \frac{1}{2} z^2 -\ieps V_{m,\eps}'''(z_{m,\eps}+\theta'z) \frac{1}{6} z^3 }\, dz  
  \\
 &~\leq ~  C\,    \ee^{ - \ieps V_{m,\eps}(z_{m,\eps})} \,\int_{-\rho}^{\rho} | z|  \, \ee^{ - \ieps V_{m,\eps}''(z_{m,\eps}) \frac{1}{2} z^2  }\, dz 
  ~\leq ~ C \,  \ee^{ - \ieps V_{m,\eps}(z_{m,\eps})} \,\eps .
 \end{split}
 \end{align}
 Combining \eqref{EqLocalCramer7}, \eqref{EqLocalCramer10} and \eqref{EqLocalCramer9} and applying \eqref{EqMomentLaplace2k}  to the  denominator in the right-hand side of \eqref{EqLocalCramer7} yields   \eqref{EqLocalCramer6}.
 This concludes the proof.
 \end{proof}

\subsection{Proof of the local Cram\'er theorem}\label{SecProofLocalCramer}
In this section we prove the local Cram\'er theorem (Proposition \ref{PropEpsLocalCramer}). 
The main ideas of the proof are the same as in \cite[Proposition 31]{GORV} or \cite[Section 3]{MeOt}. 
The main difficulty here is to show that   the  estimates are uniform in $ \eps \ll 1$.

 \begin{proofofp}{\emph{\ref{PropEpsLocalCramer}}.} 
 Fix $ m \in K  $.
 		In this proof $ C \in (0,\infty) $ denotes a varying constant, which is independent of $ N   $, $ \eps  $ and $ m  $,  but may depend on $ K $.
 		
 	Let  $ s_{\eps}(m)   $ be defined by \eqref{EqSEps}.
 	In order   to simplify the presentation here, for any function $ f: \R \ra \R $ and for all $ z\in \R $, we abbreviate 
 	\begin{align}\label{EqAbrrev}
 	\langle f \rangle ~=~ \int_{\R} f(z) \, d\mu^{\eps, \varphi_\eps'(m) }(z) 
 	\qquad
 	\text{and}
 	\qquad
 	\hat{z}~=~ \frac{z-m}{s_{\eps}(m)}.
 	\end{align} 
 	
 	\textbf{Step 1.} [New representation of $ \ee^{ - N \varphi_{N,\eps}(m) } $.]\\
 	Let $ (X_i)_i $ be a sequence of random variables  that are independent and identically distributed with common law $ \mu^{\eps,  \varphi_\eps'(m)} $. Let 
 	\begin{align}
 	\tilde{S}_{\eps,m,N}~=~ \frac{1}{\sqrt{N}} \sum_{i=0}^{N-1} (X_i-m), 
 	\end{align}
 	and let $ \tilde{g}_{\eps,m,N} $ denote the Lebesgue density of the distribution of $ \tilde{S}_{\eps,m,N} $.
 	As in \cite[(31)]{MeOt}, using the coarea formula, we have that   
 	\begin{align}\label{EqLocalCramer1}
 	\tilde{g}_{\eps,m,N} (0) ~=~ \ee^{  N \varphi_{\eps}(m)  -N\varphi_{N,\eps}(m)     }.
 	\end{align}
 	Moreover, let $ g_{\eps,m,N} $ be the Lebesgue density of the distribution of 
 	\begin{align}
 	S_{\eps,m,N}= \frac{1}{\sqrt{N}} \sum_{i=0}^{N-1} \frac{X_i-m}{s_{\eps}(m)}.
 	\end{align} 
 	Then, by Lemma \ref{LemLegendre},
 	\begin{align}
 	g_{\eps,m,N}(0) ~=~ \tilde{g}_{\eps,m,N} (0) \, s_{\eps}(m) ~=~  \tilde{g}_{\eps,m,N} (0) \,  \varphi_{\eps}''(m)^{-\frac{1}{2}}.
 	\end{align}
 	Therefore, it suffices to show that for $ \eps $ small enough, 
 	\begin{align}\label{EqLocalCramer3}
 	\left| g_{\eps,m,N}(0) -  \frac{1}{\sqrt{2\pi}} \right| 
 	~=~ O_{K }\left(\frac{1}{\sqrt{N}}\right).
 	\end{align}
 	We show \eqref{EqLocalCramer3} by mimicking the arguments of the proof of \cite[Proposition 3.1]{MeOt}.	
 	Therefore, as in \cite[(44)]{MeOt}, we apply the inverse Fourier transform to obtain that 
 	\begin{align}\label{EqLocalCramer2}
 	2\pi \, g_{\eps,m,N}(0) ~=~ \int_{\R} 
 	\left\langle
 	\ee^{i\frac{1}{\sqrt{N}}\hat{z}\hat{\xi}}
 	\right\rangle^N \,  d \hat{\xi},
 	\end{align}
 	and we  split this integral  according to some $ \delta >0  $ (which is chosen in Step 2) as
 	\begin{align} 
 	\begin{split}
 	\int_{\R}      \left\langle   \ee^{i \frac{1}{\sqrt{N}} \hat{z}  \hat{\xi}}   \right\rangle^{N} d \hat{\xi} 
 	& ~ =~
 	\int_{\{|\frac{\hat{\xi}}{\sqrt{N}}|\leq \delta\}}  \left\langle   \ee^{i \frac{1}{\sqrt{N}} \hat{z}  \hat{\xi}}   \right\rangle^{N} d \hat{\xi}  ~ +~ 
 	\int_{\{|\frac{\hat{\xi}}{\sqrt{N}}|> \delta\}}      \left\langle   \ee^{i \frac{1}{\sqrt{N}} \hat{z}  \hat{\xi}}   \right\rangle^{N} d \hat{\xi}  \\
 	& ~ =:~
 	I ~+~ II .
 	\end{split}
 	\end{align}  
 	In the following we compute the asymptotic value of $ I $, and show that $ II $ is of lower order than $ I $.

 \textbf{Step 2.} [Estimation of the term $ I $.]\\
 From Lemma \ref{LemMoments} we know that there exists $ \eps_K>0  $ such that 
 \begin{align}\label{EqLocalCramer022}
\sup_{0<\eps<\eps_K}	\sup_{m\in K } ~ \sum_{k=1}^3 \langle |\hat{z}|^k \rangle ~\leq~ C .
 \end{align} 
 Then, as in \cite[(46)]{MeOt}, the estimate \eqref{EqLocalCramer022}  yields that 
 there exist $ \hat{\delta},c_K > 0   $   and a complex-valued function $ h $ such that 
 for all $ |\hat{\xi}|\leq \hat{\delta}  $ and all $ 0<\varepsilon<\varepsilon_{K} $,
 \begin{align}\label{EqLocalCramer02}
 \left\langle   \ee^{i   \hat{z}  \hat{\xi}}   \right\rangle ~=~ \ee^{-h(\hat{\xi})}
 \quad \text{and} \quad 
 \left| h(\hat{\xi}) - \frac{1}{2} \hat{\xi}^2\right| ~\leq~ c_K  |\hat{\xi}|^3  .
 \end{align} 
 Indeed, applying Taylor's formula yields that for all $ \xi \in \R $, 
 \begin{align}\label{EqLocalCramer03}
 \left\langle   \ee^{i   \hat{z}  \hat{\xi}}   \right\rangle ~=~
 1 - \frac{1}{2} \hat{\xi}^2  + \frac{1}{3}\hat{\xi}^3  \left\langle i  \hat{z}^3 \ee^{i  \theta_\xi \hat{z}  \hat{\xi}}   \right\rangle \qquad \text{for some $ \theta_\xi \in[0,1] $.}
 \end{align} 
Notice that $ \sup_{0<\eps<\eps_K,m\in K,\xi\in \R}\left|\left\langle i  \hat{z}^3 \ee^{i  \theta_\xi \hat{z}  \hat{\xi}}   \right\rangle \right| \leq C $ by   \eqref{EqLocalCramer022}.
Therefore, by applying Taylor's formula for the function $ \zeta \mapsto \log(1+\zeta) $ for $ \zeta\in \C $ with $ |\zeta| $ small enough, we obtain \eqref{EqLocalCramer02}.
 
 As a consequence of \eqref{EqLocalCramer02}, by choosing $ \delta < \hat{\delta} $, we have that
 \begin{align}
 I~=~
  	\int_{\{|\frac{\hat{\xi}}{\sqrt{N}}|\leq \delta\}}      \ee^{-Nh\left(\frac{\hat{\xi}}{\sqrt{N}}\right)} d \hat{\xi} .
 \end{align}
 Moreover, by arguing similarly as in   \cite[(69)]{MeOt},   \eqref{EqLocalCramer02} yields that
 for $ \delta < \hat{\delta} $ small enough,
 \begin{align}\label{EqLocalCramer21}
 \mathrm{Re}\left( N h\left(\frac{\hat{\xi}}{\sqrt{N}}\right)  \right) ~\geq~
 \frac{|\hat{\xi}|^2}{2}
 -c_K\,\delta\, |\hat{\xi}|^2
 ~\geq~
     \frac{|\hat{\xi}|^2}{4}  .
 \end{align} 
 This in turn implies that, by proceeding as in  \cite[p.\ 32]{MeOt},
 \begin{align}\label{EqLocalCramer22}
 \left| \ee^{-N h\left(\frac{\hat{\xi}}{\sqrt{N}}\right)} - \ee^{- \frac{1}{2} \hat{\xi}^2} \right|  
 ~\leq~ \ee^{- \frac{1}{4} \hat{\xi}^2}   c_K\, \frac{|\hat{\xi}|^3}{\sqrt{N}}  ,
 \end{align}
 which yields, as in \cite[p.\ 32]{MeOt},  to the estimate  
 \begin{align}
 \begin{split}
 \left| I -  \sqrt{2\pi} \right|   
 ~\leq~ 
   \frac{C}{\sqrt{N}}.
 \end{split}
 \end{align} 

 \textbf{Step 3.} [Estimation of the term $ II $.]\\
 It remains to show that the term $ II $ is negligible.
 Recall from Lemma \ref{LemMoments} and Lemma \ref{LemFourierTrafo} that there exist $ \eps_K',c_K' >0 $ such that for all $\hat{\xi}\in \R  $,
  \begin{align}\label{EqLocalCramer61}
   \sup_{0<\eps<\eps_K'}	\sup_{m\in K }~\langle |\hat{z}| \rangle ~\leq~ c_K'  \quad \text{ and }  \quad
   \sup_{0<\eps<\eps_K'}	\sup_{m\in K }~ \left|\left\langle
  \ee^{i\hat{z}\hat{\xi}}
  \right\rangle\right| ~\leq~ \frac{c_K' }{|\hat{\xi}|}  .
  \end{align}
 Then, following the proof of \cite[Lemma 3.4]{MeOt},  the estimates in \eqref{EqLocalCramer61}   (which are the analogues of \cite[(52)]{MeOt} and  \cite[(53)]{MeOt}) imply that for all  $ \delta < \hat{\delta}  $ there exists $ \lambda_{K,\delta} <1  $ (which depends only on $ c_K'  $ and $ \delta $) such that
  \begin{align}\label{EqLocalCramer5}
 \sup_{0<\eps<\eps_K'}	\sup_{m\in K }~ \left|\left\langle
  \ee^{i\hat{z}\hat{\xi}}
  \right\rangle\right| ~\leq~  \lambda_{K,\delta} \qquad \text{ for all } |\hat{\xi}| ~\geq ~ \delta .
  \end{align}
 Finally, applying the same arguments as in \cite[p.\ 32]{MeOt} shows that 
 \begin{align}
 |II|~\leq ~ C \, N \lambda_{K,\delta}^{N-2}.
 \end{align}
 Hence, $|II|~\leq ~ C/ \sqrt{N}$ for $ N $   large enough.
 This concludes the proof.
 \end{proofofp}

 As a simple consequence of the ideas from the proof of Proposition \ref{PropEpsLocalCramer}, we can state the  result in a more precise way in the trivial case that the (effective) single-site potential is a quadratic function.   The result is given in the following lemma. 
 \begin{lemmaa} \label{LemAppLocalCramerQuadr}
 Let	$ V (z)=  \frac{\alpha}{2} z^2 $ for some $ \alpha >0 $.
 	Let $ \chi^*_{\eps} $, $ \chi_{\eps} $, $ \mu^{\eps, \chi'_\eps(m)} $  be defined by \eqref{EqLogKom}, \eqref{EqWCramer} and \eqref{EqGCMeasure}, respectively, with $ W $  replaced by $ \ieps V $.
 	Let  $\chi_{N,\eps}: \mathbb{R} \to \mathbb{R}$ be defined by 
 	\begin{align}\label{EqEpsMuBarQ}
 	\varphi_{N,\eps} (m) ~=~ - \frac{1}{N} \log \int_{P^{-1}(m) } \ee^{ -  \ieps\sum_{i=0}^{N-1}V(x_i) }\,  d\mathcal{H}^{N-1} (x).
 	\end{align}
 	Then,  for all $ m\in \R $, 
 	\begin{align}\label{EqAppLocalCramerQuadr}
 	\begin{split}
 	\ee^{    -N\chi_{N,\eps} (m)   } \ = \ \ee^{   - N \chi_{\eps} (m)   } \,  \frac{\sqrt{\varphi_{\eps}''(m)}}{\sqrt{2\pi}} .
 	\end{split}	\end{align} 
 \end{lemmaa}
 
 \begin{proof}
Using the same notation and the same arguments as in Step 1 of the proof of  Proposition \ref{PropEpsLocalCramer}, we see that it suffices to show that 
 		\begin{align}\label{EqAppLocalCramerQuadr2}
 g_{\eps,m,N}(0) ~=~  \frac{1}{\sqrt{2\pi}} .
 	\end{align}
 	Note that by a simple computation, for all $ \sigma, m \in \R $,
 	\begin{align}\label{EqAppLocalCramerQuadr1}
 	\begin{split}
 	\chi_{\eps} (m) & ~=~  \frac{\alpha}{2\eps} m^2 ~-~ \frac{1}{2} \log \left(2\pi \frac{\eps}{\alpha}\right) \qaq
 	\mu^{\eps, \chi'_\eps(m)}(z)~=~  \ee^{-\frac{\alpha}{2\eps}(z-m)^2} \ \frac{1}{\sqrt{2\pi\frac{\eps}{\alpha}}}\ dz.
 	\end{split}
 	\end{align}
 	In particular,  $ \mu^{\eps, \chi'_\eps(m)}(z) $ is a Gaussian measure.
 	Therefore,  the claim \eqref{EqAppLocalCramerQuadr2} is a simple consequence of the stability of Gaussian measures under convolution. 
 %
 \end{proof}
 
 \subsection{Proof of the equivalence of observables}\label{SecProofEquOfObserv}
 
 In this section we prove the equivalence of observables, which is stated in Proposition \ref{PropEpsOfObserv}.
%
 The proof is similar to the proof of Proposition \ref{PropEpsLocalCramer} and combines the ideas from \cite{KwMe18a} and \cite{MeOt}.

 \begin{proofofp}{\emph{\ref{PropEpsOfObserv}}.}  
 	For simplicity, we only consider the case $ \ell =1 $. 
 	A straightforward modification of the following proof yields the claim also in the case $ \ell \in \N $.

 	Fix $ m \in K=[-2,2] $.
 	In this proof, let   $ C $   denote a varying positive constant, which does not depend on   $  N, \eps $ and  $ m$, but may depend on $ b $ and $ K $.
 	
 	 	  \textbf{Step 1.} [Cram\'er's representation.]\\
 		Proceeding as in \cite{KwMe18a}, we use the so-called \emph{Cram\'er representation} in order to rewrite the left-hand side of \eqref{EqEquOfObserv} in terms of the density of a  certain random variable.
 
 Let $\mu^{\eps,\varphi_{\eps}'(m),N}~=~\otimes_{i=1}^N \mu^{\eps,\varphi_{\eps}'(m)}  $, and let, for $ \sigma \in \R$,    the measure $ \mu^{ \sigma,\eps,\varphi_{\eps}'(m),N}  \in \cP(\R^N) $ be defined by 
 	\begin{align}
 		&\mu^{ \sigma,\eps,\varphi_{\eps}'(m),N} (dx)~=~\frac{1}{Z} \exp \left( \varphi_{\eps}'(m)\sum_{i=0}^{N-1}x_i + \sigma b(x_{0})- \ieps \sum_{i=0}^{N-1}\psi_J(x_i)  \right) dx  
 		,
 	\end{align}
 	where $ Z $  denotes the normalization constant.
 	Note that $ \mu^{ 0,\eps,\varphi_{\eps}'(m),N}~=~
 	\mu^{\eps,\varphi_{\eps}'(m),N} $.
 	Let $ (Y_i)_{i=1,\dots,N} $ be a  random vector distributed according to $ \mu^{ \sigma,\eps,\varphi_{\eps}'(m),N} $, 
 	and let 
 	\begin{align}
 		S_{\sigma,\eps,m, N}= \frac{1}{\sqrt{N}} \sum_{i=0}^{N-1} (Y_i-m). 
 	\end{align}
 Let $ \tilde{g}_{\sigma,\eps,m, N} $ denote the Lebesgue density of the distribution of $ S_{\sigma,\eps,m, N}$.
 Note that $ \tilde{g}_{0,\eps,m, N} = \tilde{g}_{\eps,m,N} $, where $ \tilde{g}_{\eps,m,N} $ is   defined in Step 1 of the proof of Proposition \ref{PropEpsLocalCramer}.
 	Using the same arguments as in  \cite[Lemma 5 and Lemma 6]{KwMe18a}, we observe that
 	\begin{align}\label{EqEpsuOfObservEqEps1}
 		\begin{split}
 			\left|    \int_{\R^N}b(x_{0}) \ d\mu^{\eps,\varphi_{\eps}'(m),N} -    \int_{P^{-1}(m)}  b(x_{0}) \ d\mu_m\right| 
 			\,=\,  	\left| \frac{	\smallrestr{\frac{d}{d\sigma}}{\sigma=0} \, \tilde{g}_{\sigma,\eps,m, N} (0)   }{\tilde{g}_{0,\eps,m, N} (0)}\right| .
 		\end{split}
 	\end{align}
 Hence, in order to show \eqref{EqEquOfObserv}, 	It suffices to show that there exist $ \eps_{b,K}>0 $ and $ N_{b,K} \in \N  $ such that for all $ N \geq N_{b,K}  $, $ 0<\varepsilon<\eps_{b,K} $ and $ m \in K $, 
 	\begin{align}\label{EqEpsOfObservEq}
\left|  	\smallrestr{\frac{d}{d\sigma}}{\sigma=0} \tilde{g}_{\sigma,\eps,m, N} (0)   \right| &~\leq ~ \frac{C}{s_\eps(m) \, \sqrt{N}}  \\ \label{EqEpsOfObservEq1}
	\left|  \tilde{g}_{0,\eps,m, N} (0)\right|  &~\geq ~ \frac{1}{s_\eps(m)\sqrt{2\pi}} ~ \left( 1
 			~+~ O_K\left(\frac{1}{\sqrt{N}}\right)\right)  , 
  	\end{align}
 	where	$ s_\eps (m)  $ is defined in \eqref{EqSEps}. 
 	
 		 	  \textbf{Step 2.} [Proof of \eqref{EqEpsOfObservEq1}.]\\
 	Using the same arguments as in  Step 1 of the proof of Proposition \ref{PropEpsLocalCramer}, we observe that
 	\begin{align}
 		\tilde{g}_{0,\eps,m, N} (0) ~=~\ee^{ N \varphi_{\eps}(m)  -N\varphi_{N,\eps} (m)    }.
 	\end{align}
 Then, Proposition \ref{PropEpsLocalCramer} yields  \eqref{EqEpsOfObservEq1}.
 
 		 	  \textbf{Step 3.} [Proof of \eqref{EqEpsOfObservEq}.]\\
Recall the abbreviations from \eqref{EqAbrrev}.
 	Let $ (X_i)_{i=1,\dots,N} $ be a  random vector distributed according to $ \mu^{ \eps,\varphi_{\eps}'(m),N} $, and let $ X $ be a  random variable distributed according to $ \mu^{ \eps,\varphi_{\eps}'(m)} $.
 	By    \cite[Lemma 7]{KwMe18a}, we have that
 	\begin{align}\label{EqEpsuOfObservEqEps3}
 		\begin{split}
 			&2\pi \smallrestr{\frac{d}{d\sigma}}{\sigma=0} \tilde{g}_{\sigma,\eps,m, N} (0)  = 
 			\int_{\R} \E_{\mu^{ \eps,\varphi_{\eps}'(m),N}}\left[  (b(X_{0}) - \langle b \rangle)   \ee^{i \frac{1}{\sqrt{N}} \sum_{i=0}^{N-1} (X_i-m)\xi}   \right] d \xi\\
 			&\quad~=~ 
 			\int_{\R} \E_{\mu^{ \eps,\varphi_{\eps}'(m)}}\left[  (b(X )- \langle b \rangle)  \ee^{i \frac{1}{\sqrt{N}}  (X -m)\xi}   \right]  
 			\E_{\mu^{ \eps,\varphi_{\eps}'(m)}}\left[    \ee^{i \frac{1}{\sqrt{N}} (X-m)\xi}   \right]^{N-1} d \xi\\
 			&\quad~=~ s_\eps(m)^{-1}
 			\int_{\R}  \left\langle (b- \langle b \rangle)   \ee^{i \frac{1}{\sqrt{N}}  \hat{z} \hat{\xi}}   \right\rangle    \left\langle   \ee^{i \frac{1}{\sqrt{N}} \hat{z}  \hat{\xi}}   \right\rangle^{N-1} d \hat{\xi}.
 		\end{split}
 	\end{align}
 	It remains to show that   for $ N $ large enough,
 	\begin{align}\label{EqEpsuOfObservEqEps5}
 		\left|  \int_{\R}  \left\langle (b- \langle b \rangle)   \ee^{i \frac{\hat{\xi}}{\sqrt{N}}   \hat{z}}   \right\rangle    \left\langle   \ee^{i \frac{\hat{\xi}}{\sqrt{N}}   \hat{z}}   \right\rangle^{N-1} d \hat{\xi} \right| ~\leq~ 
 		 \frac{C}{\sqrt{N}}    .
 	\end{align}   
 	
 	In order to show \eqref{EqEpsuOfObservEqEps5}, we proceed as in the proof of  ~\cite[Proposition 3.1]{MeOt} and Proposition \ref{PropEpsLocalCramer}.
 Let $ \hat{\delta} > 0   $ and  $ h $ be given as in Step 2 of the proof of Proposition \ref{PropEpsLocalCramer}.
%
 	We split the integral on the left-hand side in \eqref{EqEpsuOfObservEqEps5} according to some $ \delta < \hat{\delta}  $ (which is chosen in Step 3.1) as
 	\begin{align} 
 		\nonumber
 		&	\int_{\{|\frac{\hat{\xi}}{\sqrt{N}}|\leq \delta\}}  \left\langle (b- \langle b \rangle)   \ee^{i \frac{\hat{\xi}}{\sqrt{N}}   \hat{z}}   \right\rangle    \ee^{-(N-1)h\left(\frac{\hat{\xi}}{\sqrt{N}}\right)} d \hat{\xi}   +
 		\int_{\{|\frac{\hat{\xi}}{\sqrt{N}}|> \delta\}}  \left\langle (b- \langle b \rangle)   \ee^{i \frac{\hat{\xi}}{\sqrt{N}}   \hat{z}}   \right\rangle    \left\langle   \ee^{i \frac{\hat{\xi}}{\sqrt{N}}   \hat{z}}   \right\rangle^{N-1} d \hat{\xi}  \\
 		&~=:
 		I + II .
 	\end{align}  
 	We now show that $ |I|+|II|\leq C/ \sqrt{N} $.

 	\textbf{Step 3.1.} [Estimation of the term $ I $.]\\
	This step is very similar to Step   2 of the proof of Proposition \ref{PropEpsLocalCramer}.
 Using \eqref{EqLocalCramer02}, we have  that there exists $ c_K>0 $ such that
 	\begin{align}
 		\left| (N-1) h\left(\frac{\hat{\xi}}{\sqrt{N}}\right) - \frac{1}{2} \hat{\xi}^2 \right|  
 		~\leq~ c_K\,  \frac{|\hat{\xi}|^3}{\sqrt{N}}+     \frac{|\hat{\xi}|^2}{2N}  .
 	\end{align} 
 Similarly as in \eqref{EqLocalCramer21},  this inequality yields that for $ N \geq 4 $ and for $ \delta  $ small enough,
 \begin{align}\label{EqEpsEqOfObserv21}
\mathrm{Re}\left( (N-1) h\left(\frac{\hat{\xi}}{\sqrt{N}}\right)  \right) ~\geq~
\frac{|\hat{\xi}|^2}{2}
	-\left(c_K\,\delta+\frac{1}{8}\right)\, |\hat{\xi}|^2
~\geq~
\frac{|\hat{\xi}|^2}{4}  ,
\end{align} 
and hence, as in  \eqref{EqLocalCramer22}, 
 	\begin{align}
 		\left| \ee^{-(N-1) h\left(\frac{\hat{\xi}}{\sqrt{N}}\right)} - \ee^{- \frac{1}{2} \hat{\xi}^2} \right|  
 		~\leq~ \ee^{- \frac{1}{4} \hat{\xi}^2}\, 
 		 \left| c_K\,  \frac{|\hat{\xi}|^3}{\sqrt{N}}+     \frac{|\hat{\xi}|^2}{N}\right|  .
 	\end{align}
 	This implies that
 	\begin{align}
 		\begin{split}
 			&\left| I -  \int_{\{|\frac{\hat{\xi}}{\sqrt{N}}|\leq \delta\}}  \left\langle (b- \langle b \rangle)   \ee^{i \frac{\hat{\xi}}{\sqrt{N}}   \hat{z}}   \right\rangle    \ee^{- \frac{\hat{\xi}^2}{2}} d \hat{\xi} \right|  
 			\,\leq \,
 			C
 			\int_{\{|\frac{\hat{\xi}}{\sqrt{N}}|\leq \delta\}} 
 			\left| \frac{|\hat{\xi}|^3}{\sqrt{N}} +   \frac{|\hat{\xi}|^2}{N} \right| \ee^{- \frac{\hat{\xi}^2}{4}} d \hat{\xi}	
 			\leq 
 			\frac{C}{\sqrt{N}} ,
 		\end{split}
 	\end{align}
 	since, in view of \eqref{EqEquOfObserv0},
 	\begin{align}\label{EqEpsuOfObservEqEps4}
 		\left| \left\langle (b- \langle b \rangle)   \ee^{i \frac{\hat{\xi}}{\sqrt{N}}   \hat{z}}   \right\rangle \right| 
 		~\leq~ \sup_{m \in K}
 		\left\langle \left| b- \langle b \rangle \right|      \right\rangle  
 		~<~ \infty.
 	\end{align}
 	Moreover,  by  Taylor's formula, for some $ \theta \in [0,1] $, 
 	\begin{align}
 		\begin{split}
 			&\int_{\{|\frac{\hat{\xi}}{\sqrt{N}}|\leq \delta\}}  \left\langle (b- \langle b \rangle)   \ee^{i \frac{\hat{\xi}}{\sqrt{N}}   \hat{z}}   \right\rangle \,    \ee^{- \frac{|\hat{\xi}|^2}{2}}\,  d \hat{\xi}
 			\\
 			& \quad ~\leq~ 0 + 
 			\left|  \frac{1}{\sqrt{N}} \int_{\{|\frac{\hat{\xi}}{\sqrt{N}}|\leq \delta\}}  \left\langle   \hat{z} (b- \langle b \rangle)   \ee^{i \theta\frac{\hat{\xi}}{\sqrt{N}}   \hat{z}}   \right\rangle \,    i \hat{\xi} \ee^{- \frac{|\hat{\xi}|^2}{2}} \, d \hat{\xi} \right|  \\
 			& \quad ~\leq~  \frac{1}{\sqrt{N}}\ 
 			\left|  \left\langle   \hat{z} (b- \langle b \rangle)   \ee^{i \theta\frac{\hat{\xi}}{\sqrt{N}}   \hat{z}}   \right\rangle\right|  \
 			 \int_{\R}      |\hat{\xi}| \, \ee^{- \frac{|\hat{\xi}|^2}{2}} \, d \hat{\xi} 
 			~ \leq~  			
 		  \left\langle   |\hat{z}|^2   \right\rangle
 		  \left
 		  \langle  |b|^2    \right\rangle 
 			\frac{C}{\sqrt{N}}  .
 		\end{split}
 	\end{align}
 Using \eqref{EqEquOfObserv0} and that $ \sum_{k=1}^3 \langle |\hat{z}|^k \rangle \leq C $ implies that  $ 
 	\left|  I\right|  ~\leq ~ 
 	C/\sqrt{N}     $.
 	
 		\textbf{Step 3.2.} [Estimation of the term $ II $.]\\
 First note that by using \eqref{EqEpsuOfObservEqEps4}  it only remains to show that 
 	\begin{align}\label{EqAppEqEpsuOfObserv6}
 		\int_{\{|\frac{\hat{\xi}}{\sqrt{N}}|\geq \delta\}}       \left\langle   \ee^{i \frac{\hat{\xi}}{\sqrt{N}}   \hat{z}}   \right\rangle^{N-1} d \hat{\xi} ~\leq ~ \frac{C}{\sqrt{N}}.
 	\end{align} 
 Then, a straightforward adaptation of the arguments in Step 3 of the proof of Proposition \ref{PropEpsLocalCramer} yields the claim.
 \end{proofofp}
 

 
 \smallskip
 \noindent
 {\bf Acknowledgement.} 	
 The authors would like to give many thanks to Anton Bovier,
  Lorenzo Dello Schiavo, Dmitry Ioffe and Andr\'e Schlichting for many useful discussions and  suggestions.
  Moreover, the authors would like to thank the anonymous referees for numerous helpful comments and  for having read the paper with great care.    
 
\bibliography{TEX-Bib.bib}

\end{document}